\author{Benjamin McKay}
\title[Hartogs and parabolic geometries]%
{The Hartogs extension problem for holomorphic parabolic and reductive geometries}
\date{\today}
\address{School of Mathematical Sciences, University College Cork, Cork, Ireland}
\email{b.mckay@ucc.ie} 
\thanks{It is a pleasure to thank Sorin Dumitrescu and Filippo Bracci for helpful conversations on the problems solved in this paper, and for inviting me to the Laboratoire de Math{\'e}matiques J.A. Dieudonn{\'e} at the University of Nice Sophia--Antipolis and to the University of Rome Tor Vergata where part of this work was carried out. This publication has emanated from activity conducted with the financial support of Science Foundation Ireland under the International Strategic Cooperation Award Grant Number SFI/13/ISCA/2844.}
\keywords{Cartan geometry, Hartogs extension}
\newtheorem{theorem}{Theorem}[section]
\newtheorem{lemma}[theorem]{Lemma}
\newtheorem{corollary}[theorem]{Corollary}
\newtheorem{proposition}[theorem]{Proposition}
\theoremstyle{remark}
{%
    \newtheorem{example}[theorem]{Example}
}%
\newtheorem{definition}[theorem]{Definition}
\newcommand*{\defeq}{\mathrel{\vcenter{\baselineskip0.5ex \lineskiplimit0pt
                     \hbox{\scriptsize.}\hbox{\scriptsize.}}}%
                     =}
\newcommand*{\C}[1]{\ensuremath{\mathbb{C}^{#1}}}
\newcommand*{\Z}[1]{\ensuremath{\mathbb{Z}^{#1}}}
\newcommand*{\pr}[1]{\ensuremath{\left(#1\right)}}
\newcommand*{\of}[1]{\ensuremath{\pr{#1}}}
\newcommand*{\br}[1]{\ensuremath{\left\{#1\right\}}}
\newcommand*{\wo}[1]{\ensuremath{\!-#1}}
\newcommand*{\dimC}[1]{\ensuremath{\dim_{\mathbb{C}}\of{#1}}}
\newcommand*{\Sym}[2]{\ensuremath{\operatorname{Sym}^{#1}\pr{#2}}}
\newcommand*{\GL}[1]{\ensuremath{\operatorname{GL}\!\pr{#1}}}
\newcommand*{\LieSL}[1]{\ensuremath{\mathfrak{sl}\!\pr{#1}}}
\newcommand*{\PSL}[1]{\ensuremath{\mathbb{P}\operatorname{SL}\!\pr{#1}}}
\newcommand*{\nForms}[2]{\ensuremath{\Omega^{#1}\!\pr{#2}}}
\newcommand*{\Lm}[2]{\ensuremath{\Lambda^{#1}\!\pr{#2}}}
\newcommand*{\Lmtop}[1]{\ensuremath{\Lm{\text{top}}{#1}}}
\newcommand*{\cohomology}[2]{\ensuremath{H^{#1}\!\pr{#2}}}
\DeclareMathOperator{\Ad}{Ad}
\DeclareMathOperator{\ad}{ad}
\newcommand*{\hook}{\ensuremath{\mathbin{ \hbox{\vrule height1.4pt width4pt depth-1pt \vrule height4pt width0.4pt depth-1pt}}}}
\newcommand*{\Proj}[1]{\ensuremath{\mathbb{P}^{#1}}}
\newcommand*{\pd}[3][1]
{
\frac{%
\partial%
\ifnum\pdf@strcmp{#1}{1}=0\else^#1\fi%
#2}%
{%
\partial
\ifnum\pdf@strcmp{#1}{1}=0\else^#1\fi%
#3
}%
}
\newcommand*{\OO}[1]{%
  \ensuremath{%
    \mathcal{O}%
    \IfStrEq{#1}{0}{}{\of{#1}}
  }%
}%
\newcommand*{\OOp}[2]{
  \ensuremath{
    \mathcal{O}
    \IfStrEq{#1}{0}{}{\of{#1}}
    \IfStrEq{#2}{1}{}{^{\oplus{#1}}}
  }
}
\renewcommand*\env@matrix[1][\arraystretch]{%
  \edef\arraystretch{#1}%
  \hskip -\arraycolsep
  \let\@ifnextchar\new@ifnextchar
  \array{*\c@MaxMatrixCols c}}
\newcounter{remarkCounter}
\newcommand*{\git}{\!\!\mathbin{
  \mathchoice{/\mkern-6mu/}
    {/\mkern-6mu/}
    {/\mkern-5mu/}
    {/\mkern-5mu/}}\!\!}
\newlength{\transposeHeight}
\newcommand*{\lb}[2]{\ensuremath{\left[#1#2\right]}}
\newcommand*{\map}[3][:]%
{\ensuremath{\ifnum\pdf@strcmp{#1}{:}=0\else{#1}\colon\fi{#2} \to {#3}}}
\newcommand*{\mapto}[3][:]%
{\ensuremath{\ifnum\pdf@strcmp{#1}{:}=0\else{#1}\colon\fi{#2} \mapsto {#3}}}
\newcommand*{\Hom}[2]{\ensuremath{\operatorname{Hom}\left({#1},{#2}\right)}}
\newcommand*{\Lie}[1]{\ensuremath{\mathfrak{\lowercase{#1}}}}
\newcommand*{\LieA}{\Lie{A}}
\newcommand*{\LieG}{\Lie{G}}
\newcommand*{\LieH}{\Lie{H}}
\newcommand*{\LieM}{\Lie{M}}
\newcommand*{\LieN}{\Lie{N}}
\newcommand*{\LieP}{\Lie{P}}
\newcommand*{\prodquot}[3]{\ensuremath{#1 \times^{#3}\! #2}}
\newcommand*{\zentrum}[1]{\ensuremath{Z_{#1}}}
\newcommand*{\fundamentalgroup}[1]{\ensuremath{\pi_1\of{#1}}}
\newcommand*{\sumnoncptneg}[1]%
{
\ensuremath%
{%
\sum_{#1 \in \Delta^-_N}%
}%
}
\newcommand*{\sumnoncptpos}[1]%
{
\ensuremath%
{%
\sum_{#1 \in \Delta^+_N}%
}%
}
\newcommand*{\bigwedgenoncptneg}[1]%
{
\ensuremath%
{%
\bigwedge_{#1 \in \Delta_N^-}
}%
}
\newcommand*{\KillingForm}[2]{\ensuremath{\left<#1,#2\right>}}
\newcommand*{\kc}
{\ensuremath{\phi}}
\newcommand*{\Fr}[1]{F#1}
\newcommand*{\fol}{\ensuremath{\mathscr{F}}}
\newcommand*{\ConnVar}{\ensuremath{\Lambda}}
\begin{document}

\begin{abstract}
Every holomorphic effective parabolic or reductive geometry on a domain over a Stein manifold is the pullback of a unique such geometry on the envelope of holomorphy of the domain. 
We use this result to classify the Hopf manifolds which admit holomorphic reductive geometries, and to classify the Hopf manifolds which admit holomorphic parabolic geometries.
Every Hopf manifold which admits a holomorphic parabolic geometry with a given model admits a flat one.
We classify flat holomorphic parabolic geometries on Hopf manifolds.
For every generalized flag manifold there is a Hopf manifold with a flat holomorphic parabolic geometry modelled on that generalized flag manifold.
\end{abstract}

\maketitle
{
\begin{multicols}{2}
\tableofcontents 
\end{multicols}
}

\section{Statement of the theorems}

For definitions of terms in this section, see section~\ref{section:Definitions}.
In this paper, we show that holomorphic parabolic geometries obey the Hartogs extension phenomenon, which constrains the possible singularities they can have, and completes the open problems of my paper \cite{McKay:2009}.
By contrast, many examples \cite{McKay:2009} of holomorphic Cartan connections do \emph{not} obey the Hartogs extension phenomenon.
Extension of parabolic and reductive geometries across codimension 2 singularities is elementary, as is extension to envelope of holomorphy for reductive geometries, but extension to envelope of holomorphy for parabolic geometries turns out to be more difficult, and requires some new results relating parabolic geometries to holomorphic connections on the canonical bundle.
We also provide new examples of holomorphic parabolic geometries on compact complex manifolds.
For infinitely many generalized flag varieties, there were previously no torsion-free parabolic geometries modelled on that generalized flag variety known on any compact complex manifold other than the model.
We use Hartogs extension to classify the Hopf manifolds which admit holomorphic reductive geometries, and to classify the Hopf manifolds which admit holomorphic parabolic geometries.
Every Hopf manifold which admits a holomorphic parabolic geometry with a given model admits a flat one.
We classify flat holomorphic parabolic geometries on Hopf manifolds.
For every generalized flag variety, we construct a Hopf manifold with a flat holomorphic parabolic geometry modelled on that generalized flag variety.

\begin{theorem}\label{theorem:Reductive}
Every holomorphic effective reductive geometry with connected model on a domain over a Stein manifold is the pullback of a unique such geometry from the envelope of holomorphy of the domain.
\end{theorem}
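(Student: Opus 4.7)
The plan is to decompose the reductive Cartan geometry into an $H$-structure and a compatible principal connection, and then to extend each piece separately using Hartogs-type extension theorems for holomorphic maps into Stein spaces and for holomorphic functions.

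Because the model is reductive, there is an $\Ad\pr{H}$-invariant splitting $\LieG = \LieH \oplus \LieM$ and the Cartan connection $\omega$ decomposes accordingly as $\omega = \omega_\LieH + \omega_\LieM$. The first component is a principal $H$-connection on the underlying principal $H$-bundle $P \to M$, and the second is a horizontal $H$-equivariant soldering form identifying each horizontal tangent space with $\LieM$. Since the geometry is effective with connected model, the $H$-representation on $\LieM$ is faithful, so $P$ is a holomorphic reduction of the frame bundle $\Fr{M}$ to $H$; equivalently, the $H$-structure is encoded by a holomorphic section $s\colon M \to \Fr{M}/H$.

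Because $M \to N$ is a Riemann domain over a Stein manifold $N$, the frame bundle $\Fr{M}$ is pulled back from $\Fr{N}$ and therefore extends canonically to $\Fr{\tilde M}$ over the envelope of holomorphy $\tilde M$. The reductive hypothesis on $H$ enters decisively here: by Matsushima's theorem the fiber $\GL{\LieM}/H$ is an affine variety, hence Stein, so $\Fr{\tilde M}/H \to \tilde M$ is a holomorphic fiber bundle with Stein fibers. Working in a local trivialization of $\tilde M$ over the Stein base, the section $s$ becomes a holomorphic map from a domain into the Stein total space of a trivialized bundle, so Hartogs extension for holomorphic maps into Stein spaces provides a unique holomorphic extension $\tilde s \colon \tilde M \to \Fr{\tilde M}/H$. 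Patching trivializations gives the extended $H$-reduction $\tilde P \subset \Fr{\tilde M}$, whose soldering form is the tautological one.

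It remains to extend the principal $H$-connection and to verify uniqueness. In a local holomorphic trivialization of $\tilde P$, the connection is an $\LieH$-valued holomorphic 1-form on an open subset of $M$, whose components extend to $\tilde M$ by the classical Hartogs theorem for holomorphic functions; the gauge compatibility across overlaps is a polynomial identity in the already-extended transition functions of $\tilde P$ and their first derivatives, so it persists on $\tilde M$ by analytic continuation. Reassembling yields a holomorphic Cartan connection $\tilde\omega = \tilde\omega_\LieH + \tilde\omega_\LieM$ on $\tilde P$, and uniqueness of the entire extension follows from the identity principle applied to $\tilde s$ and to the local connection forms. The main obstacle, and the step that forces the reductive hypothesis, is the extension of $s$: without reductivity of $H$ the fiber $\GL{\LieM}/H$ need not be Stein — in the parabolic case it is a generalized flag variety, which is projective — so this direct route to Hartogs extension breaks down, which is presumably why the parabolic case requires the separate and finer techniques developed later in the paper.
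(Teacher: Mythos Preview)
Your overall strategy matches the paper's: reductivity gives an $H$-invariant splitting, effectiveness with connected model forces $H$ to act faithfully on $\LieG/\LieH$ (the paper's Lemma~\ref{lemma:Faith}), so the Cartan geometry induces a genuine $H$-reduction of the frame bundle, and the crucial step is extending this reduction using that $\GL{n,\C{}}/H$ is affine for reductive $H$. Your invocation of Matsushima's theorem is exactly the content of the paper's Lemma~\ref{lemma:Hilbert} and the Boole-section argument behind Theorem~\ref{theorem:Hwang.Mok}. Once the bundle extends, the paper simply cites the earlier result that the Cartan connection then extends automatically, rather than splitting into soldering form plus principal connection as you do; but that difference is cosmetic.

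There is, however, a genuine gap in your local-trivialization arguments, both for the section $s$ and for the connection form. You restrict to a trivializing open set $\tilde U \subset \tilde M$, set $U = \tilde U \cap M$, and then appeal to Hartogs extension to pass from $U$ to $\tilde U$. But $\tilde U$ is not the envelope of holomorphy of $U$ in general, so neither classical Hartogs for functions nor Hartogs-for-Stein-targets carries data from $U$ to $\tilde U$. The paper avoids this by working globally: it embeds $FM/G$ fiberwise into a genuine holomorphic vector bundle on $\hat M$ (the Boole bundle $FM \times_{\GL{n}} W$), so that the $G$-structure becomes a global section of a vector bundle that extends to $\hat M$, and then invokes Proposition~\ref{proposition:vector.bundle.sections} to extend that section. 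The same device handles the Cartan connection, viewed as a global section of $T^*\hat E \otimes_H \LieG$. Your argument is easily repaired along these lines, but as written the localization step does not go through.
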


\begin{theorem}\label{theorem:Parabolic}
Every holomorphic effective parabolic geometry on a domain over a Stein manifold is the pullback of a unique such geometry from the envelope of holomorphy 
of the domain.
\end{theorem}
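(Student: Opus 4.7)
The plan is to reduce Theorem \ref{theorem:Parabolic} to Theorem \ref{theorem:Reductive} by exhibiting, for every holomorphic parabolic geometry, an underlying reductive geometry on an enlarged base, and then descending the reductive extension.

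Let $(\mathcal{G},\omega)$ be an effective holomorphic parabolic geometry on a domain $U$ over a Stein manifold $X$, modelled on $G/P$. I would fix a Levi decomposition $P=L\ltimes N$ with $L$ reductive in $G$ and $N$ unipotent. Since $L\subset P$, the principal $P$-bundle $\mathcal{G}\to U$ is also a principal $L$-bundle $\mathcal{G}\to\mathcal{G}/L$. The Cartan connection $\omega$ is automatically $L$-equivariant, restricts to the Maurer--Cartan form on $L$-orbits, and remains a pointwise linear isomorphism onto $\mathfrak{g}$, so $(\mathcal{G}\to\mathcal{G}/L,\omega)$ is a Cartan geometry with model $G/L$. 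This is reductive (since $L$ is reductive), its model is connected (as $G$ and $L$ are connected), and effectivity transfers because the kernel of $G\to\operatorname{Diff}(G/L)$ is the largest normal subgroup of $G$ contained in $L\subset P$, which is trivial by effectivity of $G/P$.

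Next I would verify that $\mathcal{G}/L$ is itself a domain over a Stein manifold. The projection $\mathcal{G}/L\to U$ is a holomorphic fibre bundle with fibre $P/L\cong N\cong\C{k}$, and transition functions for $N$-actions are polynomial, so the bundle extends to a holomorphic fibre bundle over the ambient Stein $X$; its total space is again Stein by the Matsushima--Morimoto theorem for bundles with Stein fibre over a Stein base. This exhibits $\mathcal{G}/L$ as a domain over a Stein manifold whose envelope of holomorphy projects naturally over $\tilde U$. Theorem \ref{theorem:Reductive} then yields a unique extension of the reductive geometry to that envelope.

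The main obstacle is the descent step: promoting the extended $L$-action on the Cartan bundle to a $P$-action, equivalently extending the fibrewise $N$-action on $\mathcal{G}$ across the envelope. Because $N$ is unipotent, its action is encoded by holomorphic sections of the associated bundle with fibre $\mathfrak{n}$ over the extended Cartan bundle; classical Hartogs extension for holomorphic sections of vector bundles over envelopes of holomorphy furnishes the extended $N$-action, and the ambiguity of the Levi decomposition (unique up to $N$-conjugation) is absorbed by the $P$-equivariance of the resulting structure. Quotienting by the reconstructed $P$-action recovers $\tilde U$ and endows it with the desired extended parabolic geometry, with uniqueness inherited from Theorem \ref{theorem:Reductive} and from Hartogs uniqueness for the section-theoretic data.
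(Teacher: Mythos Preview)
Your strategy diverges from the paper's in an essential way: the paper keeps the base $M$ fixed and \emph{reduces} the structure group from $P$ to the Levi factor $MA$ by producing an $MA$-subbundle $E_0\subset E$, whereas you keep the total space fixed and \emph{enlarge} the base to $\mathcal G/L$. The paper's reduction is the substantive step---it uses a holomorphic connection on $\kappa_M$ (available by Cartan's Theorem~B since $M$ sits over a Stein manifold) together with the parabolic torsion machinery of Section~\ref{section:pairing} and Lemma~\ref{lemma:Reduction} to cut down to an $MA$-bundle over $M$. Once $E_0\to M$ is in hand, Theorem~\ref{theorem:Reductive} applies directly on $M$, the extended $MA$-bundle induces an extended $P$-bundle $E'_0\times_{MA}P$, and the cited result from \cite{McKay:2009} finishes.

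Your route has a genuine gap at the point where you assert that $\mathcal G/L$ is a domain over a Stein manifold and that its envelope of holomorphy fibres over $\hat U$. Your argument that the $N$-bundle $\mathcal G/L\to U$ ``extends to a holomorphic fibre bundle over the ambient Stein $X$'' is circular: the transition data of $\mathcal G/L$ come from the $P$-cocycle of $\mathcal G$, and extending that cocycle over $\hat U$ (let alone over $X$) is exactly the content of the theorem you are trying to prove. Without an a priori extension of the bundle, there is no candidate Stein target for a global local biholomorphism from $\mathcal G/L$, and even if one produced an envelope $\widehat{\mathcal G/L}$ abstractly, nothing guarantees it projects to $\hat U$ with fibre $N$. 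The descent step is similarly underspecified: encoding the $N$-action as ``sections of the associated $\mathfrak n$-bundle'' presupposes you already have the extended $P$-bundle whose associated bundle this would be. The paper sidesteps all of this by never leaving the original base.
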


\subsection{Application: blowing down}\label{section:blow.up}

\begin{proposition}\label{proposition:blow.down}
A complex manifold bearing a holomorphic effective parabolic geometry cannot blowdown.
More generally, suppose that \(f \colon X \to Y\) is a holomorphic map of complex manifolds, and that away from some subvariety \(X_0 \subset X\), \(f\) is a biholomorphism, say \(\left.f\right|_{X-X_0} \colon X-X_0 \to Y-Y_0\), where \(Y_0 \subset Y\) is a subvariety of complex codimension 2 or more.
Suppose further that there is a point \(x \in X\) near which \(f\) is not a local biholomorphism.
For example, you might imagine that \(X\) is a blowup of \(Y\) along \(Y_0\).
Then \(X\) bears no holomorphic effective parabolic geometry.
\end{proposition}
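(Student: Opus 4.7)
The plan is to apply Theorem~\ref{theorem:Parabolic} to extend the parabolic geometry on $X - X_0 \cong Y - Y_0$ to the whole of $Y$, then to use the rigidity of Cartan connections to force $f$ to be a local biholomorphism, and hence a biholomorphism, contradicting the hypothesis.

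Suppose for contradiction that $X$ carries a holomorphic parabolic geometry, with principal $H$-bundle $\pi_X \colon P_X \to X$ and Cartan connection $\omega_X$. Via the biholomorphism $f|_{X - X_0}$ this transports to a parabolic geometry on $Y - Y_0$. Every point of $Y$ admits a Stein open neighbourhood $U$, and since $Y_0 \cap U$ has complex codimension at least two in $U$, the envelope of holomorphy of $U \setminus Y_0$ is $U$ itself; Theorem~\ref{theorem:Parabolic} then extends the geometry uniquely to $U$, and by uniqueness these local extensions glue to a holomorphic parabolic geometry $(\pi_Y \colon P_Y \to Y, \omega_Y)$ on $Y$.

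Next, I would build a global lift $F \colon P_X \to P_Y$ of $f$ satisfying $F^* \omega_Y = \omega_X$. Over $X - X_0$ such a lift exists from the identification of the two parabolic geometries. To extend $F$ across $P_X|_{X_0}$, fix $x_0 \in X_0$, choose holomorphic local sections $\sigma_X$ of $\pi_X$ near $x_0$ and $\sigma_Y$ of $\pi_Y$ near $f(x_0)$, and write $F$ in these trivialisations as a holomorphic map $\phi$ into $H$, initially defined only off $X_0$. The identity $F^* \omega_Y = \omega_X$ then becomes a first-order holomorphic differential equation for $\phi$ whose coefficients $\sigma_X^* \omega_X$ and $f^* \sigma_Y^* \omega_Y$ are holomorphic on the full neighbourhood of $x_0$; the existence theorem for holomorphic ordinary differential equations, with initial value taken from any nearby point of $X - X_0$, extends $\phi$, and hence $F$, holomorphically across $X_0$.

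With $F \colon P_X \to P_Y$ now everywhere defined and $F^* \omega_Y = \omega_X$ holding on all of $P_X$ by continuity, the pointwise linear-isomorphism property of Cartan connections forces $dF$ to be a pointwise linear isomorphism; hence $F$, and therefore $f$, is a local biholomorphism. Combined with $f$ restricting to a biholomorphism $X - X_0 \to Y - Y_0$ and with $Y_0$ having codimension at least two in $Y$, this forces $f$ to be bijective and hence a biholomorphism, contradicting the hypothesis. The key technical step, where the rigidity of parabolic geometries is genuinely used, is the extension of the lift $F$ across $P_X|_{X_0}$: that subvariety may be of codimension only one in $P_X$, so a direct Hartogs argument on $P_X$ is unavailable, and one really needs the rigidity encoded in the equation $F^* \omega_Y = \omega_X$.
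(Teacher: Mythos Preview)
Your overall strategy coincides with the paper's: push the geometry to $Y-Y_0$, extend it to $Y$, then identify the pullback along $f$ with the original geometry on all of $X$, forcing $f$ to be a local biholomorphism. The paper compresses your extension-of-$F$ step into the single phrase ``agreeing by analyticity with the geometry already found on $X$,'' so you are right to isolate it as the substantive point.

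However, your justification of that step has a gap. Invoking ``the existence theorem for holomorphic ordinary differential equations'' with initial value at a nearby point $x_1 \in X - X_0$ does not by itself extend $\phi$ across $X_0$: the existence theorem only yields a solution on a polydisc whose radius depends on the initial data, and for a general first-order system the solution can blow up before reaching $X_0$. What saves you is that the equation is \emph{linear}: after a faithful embedding $H\hookrightarrow\GL{V}$, the relation $F^*\omega_Y=\omega_X$ reads $d\phi = \phi\,(\sigma_X^*\omega_X) - (f^*\sigma_Y^*\omega_Y)\,\phi$, a linear Pfaffian system in the matrix entries of $\phi$. Its Frobenius integrability holds on $U\setminus X_0$ (a solution exists there) and hence on all of $U$ by analytic continuation of the integrability identity; linearity then gives a global solution on any simply connected coordinate ball, since along each path one is solving a one-variable linear ODE, and those never blow up. You should also verify that the extended $\phi$ lands in $H$: extend $\phi^{-1}$ by the same linear argument to see that $\phi$ stays in $\GL{V}$, and then use that $H$ is Zariski-closed in $\GL{V}$.
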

\begin{proof}
Suppose for the moment that \(X\) is equipped with a holomorphic effective parabolic or reductive geometry.
Then \(f\) induces such a geometry on \(Y-Y_0\).
By our theorems above, this geometry holomorphically extends to \(Y\).
The bundle and Cartan connection of the geometry on \(Y\) pull back to \(X\), agreeing by analyticity with the geometry already found on \(X\).
The pullback of the Cartan connection is therefore nondegenerate along \(X_0\).
But then \(f\) is a local biholomorphism everywhere on \(X\).
\end{proof} 
This is the only known obstruction to existence of a parabolic geometry on any noncompact complex manifold.
\begin{corollary}
Suppose that \(M\) is a compact complex manifold of algebraic dimension zero with Albanese \(A_M\).
Suppose that \(\dim A_M=\dim M\).
Note that dimension, algebraic dimension and Albanese dimension are birational invariants of \(M\). 
Suppose that \(M\) has a holomorphic parabolic geometry.
Then \(M\) is a complex torus and the parabolic geometry is translation invariant.
\end{corollary}
\begin{proof}
The Albanese map \(M \to A_M\) is a proper modification \cite{Ueno:1975} p. 159 corollary 13.7.
By proposition~\vref{proposition:blow.down}, the Albanese map is an isomorphism.
Every holomorphic parabolic geometry on any complex torus is translation invariant \cite{McKay:2008}.
\end{proof}

\subsection{All known parabolic geometries on compact complex manifolds}

Suppose that \pr{X,G} is a rational homogeneous variety.
Let us describe all known examples of holomorphic \pr{X,G}-geometries on compact complex manifolds.
There is a unique holomorphic \pr{X,G}-geometry on \(X\) \cite{McKay:2004}.
All flat holomorphic \pr{X,G}-geometries are classified on compact complex surfaces \cite{Klingler:1998}.
All holomorphic parabolic geometries on complex tori are classified \cite{McKay:2008}; every holomorphic parabolic geometry on any complex torus is translation invariant.
An \pr{X,G}-\emph{Klein geometry} is a compact complex manifold \(M\) arising as quotient \(M=\Gamma \backslash U\) of an open subset \(U \subset X\) by a discrete group \(\Gamma \subset G\) acting freely and properly; it bears the obvious \pr{X,G}-structure (see section~\vref{section:structures}).
Every 3-dimensional projective Klein geometry \(M=\Gamma\backslash U\) for which \(U\subset \Proj{3}\) contains a projective line is known \cite{Kato:2010}.
On any connected complex manifold containing a rational curve, any holomorphic projective connection is flat \cite{Biswas/McKay:2010} p. 9 corollary 4, has a Zariski dense family of deformations of the rational curve, and admits no holomorphic deformations, as the Schwarzian derivative has to vanish along the rational curves \cite{McKay:2013c}, so each of these Klein geometries is the unique holomorphic projective connection on each of those complex 3-folds.

Recall that a \emph{cominiscule variety} is a Hermitian symmetric space \(X\) together with a connected complex semisimple Lie group \(G\) of biholomorphisms of \(X\), so that \(G\) contains the isometry group of \(X\) as a maximal compact subgroup.
There is a real form \(G^* \subset G\) of \(G\) which acts on \(X\) with orbit a noncompact symmetric space: the noncompact dual \(X^* \subset X\) of \(X\).
There are many examples of Hermitian locally symmetric manifolds \cite{Mok:1989}, i.e. compact complex manifolds with \(X^*\) as universal covering space, or equivalently \pr{X^*,G^*}-Klein geometries.
Each Hermitian locally symmetric manifold has a flat holomorphic \pr{X,G}-geometry given by the inclusion \(X^* \subset X\).
This \((X,G)\)-geometry is the unique holomorphic cominiscule geometry on that complex manifold \cite{McKay:2013c}.
Every compact complex manifold \(M\) with \(c_1(M,T)<0\) which admits a holomorphic cominiscule geometry is a locally Hermitian symmetric space \cite{McKay:2013c}.
There is one known exotic example \cite{Jahnke/Radloff:2004} of a cominiscule geometry, exotic because it is defined on a smooth complex projective variety, but is not (i) \(X\) or (ii) a complex torus or (iii) a locally Hermitian symmetric manifold; in this one example, \(\pr{X,G}=\pr{\Proj{3},\PSL{4,\C{}}}\).
In each of our examples so far, all of the known geometries are flat, except that there are both flat and nonflat holomorphic \pr{X,G}-geometries on any complex torus of the same dimension as \(X\) \cite{McKay:2008}.

Next suppose that \pr{X,G} is a rational homogeneous variety, but is \emph{not} cominiscule.
Every complex torus of the same dimension as \(X\) admits \pr{X,G}-geometries; although all of these geometries are translation invariant, none of them are flat \cite{McKay:2008}.

Suppose that \(P' \subset P \subset G\) are parabolic subgroups of a complex semisimple Lie group, and that \(X=G/P\) is cominiscule. 
Let \(X'\defeq G/P'\); consider the fiber bundle map \(P/P' \to X' \to X\).
Take a complex manifold \(M\) with holomorphic cominiscule modelled on \(\pr{X,G}\),  say \(E \to M\).
Let \(M'\defeq E/P'\), giving a holomorphic fiber bundle \(P/P' \to M' \to M\) whose total space \(M'\) comes with a flat holomorphic \(\pr{X',G}\)-geometry, with bundle \(E \to M'\) and with the same Cartan connection as \(E \to M\).
This geometry is the only holomorphic parabolic geometry known on any such complex manifold \(M'\).
The only known examples of holomorphic noncominiscule parabolic geometries are (i) the model \(X\), (ii) complex tori and (iii) these bundles \(M' \to M\).
Those on complex tori cannot be flat; torsion-free holomorphic noncominiscule parabolic geometries on tori are not known to exist.

\section{Definitions}\label{section:Definitions}

\subsection{Envelopes of holomorphy}

A complex manifold \(M\) is \emph{Stein} if \(M\) admits a proper holomorphic embedding in a finite dimensional complex Euclidean space. 
A connected complex manifold \(M\) is a \emph{domain over} a Stein manifold \(X\) if there is a local biholomorphism \(M \to X\).
Take the space \(C\) of characters of the algebra of holomorphic functions on \(M\), with the weak\({}^*\) topology, and map \(M \to C\) by associating to a point the evaluation character of that point.
Extend each holomorphic function on \(M\) to a function on \(C\) by evaluating characters.
The \emph{envelope of holomorphy} \(\hat{M} \subset C\) of a domain \(M\) over a Stein manifold is the connected component of \(C\) containing the image of \(M\).
Rossi \cite{Rossi:1963} proved that if \(M\) is a domain over a Stein manifold then \(\hat{M}\) has a unique structure of a Stein manifold so that the inclusion \(M \to \hat{M}\) is an open holomorphic embedding, and every holomorphic function on \(M\) is pulled back from a unique holomorphic function on \(\hat{M}\).
For an introduction to the theory of extension and envelopes of holomorphy see \cite{Matsushima/Morimoto:1960}.

\subsection{Cartan geometries}

If \(E \to M\) is a principal right \(H\)-bundle, we denote the action of any element \(h \in H\) as \(r_h \colon E \to E\).

Suppose that \(G\) is a Lie group acting transitively on a manifold \(X\) and let \(H \subset G\) be the stabilizer of a point \(o \in X\).
Denote the Lie algebras of \(H \subset G\) by \(\LieH \subset \LieG\). 
An \(\pr{X,G}\)-geometry, also known as a \emph{Cartan geometry} modelled on \pr{X,G}, on a manifold \(M\) is a choice of \(C^\infty\) principal \(G\)-bundle \(E_G \to M\) with connection \(\omega\) and \(H\)-subbundle \(E \subset E_G\) so that the tangent spaces of \(E\) are complementary to the null spaces of \(\omega\).
The \emph{Cartan connection} is the restriction of \(\omega\) to \(E\), also denoted \(\omega\).
Each \(A \in \LieG\) has associated vertical vector field on \(E_G\), which splits at each point of \(E\) by complementarity of \(E\) and the null spaces of \(\omega\): denote the projection to \(T_e E\) as \(\vec{A}\).
The bundle \(E \to M\) and the Cartan connection \(\omega\) determine the Cartan geometry as \(E_G \defeq \prodquot{E}{G}{H}\) and \(\omega\) extends uniquely to \(E_G\) to become a connection.

Sharpe \cite{Sharpe:2002} gives an introduction to Cartan geometries.
\begin{example}
The principal \(H\)-bundle \(G \to X\) is an \((X,G)\)-geometry, with the left invariant Maurer--Cartan 1-form \(\omega=g^{-1} \, dg\) on \(G\) as Cartan connection; this geometry is called the \emph{model \((X,G)\)-geometry}.
\end{example}
An \emph{isomorphism} of \(\pr{X,G}\)-geometries \(E_0 \to M_0\) and \(E_1 \to M_1\) with Cartan connections \(\omega_0\) and \(\omega_1\) is an \(H\)-equivariant diffeomorphism  \(\map[F]{E_0}{E_1}\) so that \(F^* \omega_1 = \omega_0\).

\subsection{Effective, reductive and parabolic geometries}

A complex linear algebraic group \(G\) is \emph{reductive} if it contains a Zariski dense compact subgroup.
A \emph{generalized flag variety}, also called a \emph{rational homogeneous variety}, is a
complex projective variety \(X\) acted on transitively and holomorphically by a connected complex semisimple Lie group \(G\).
The  stabilizer \(H \subset G\) of a point \(o \in X\) is a closed complex subgroup called a \emph{parabolic subgroup}.
An \(\pr{X,G}\)-geometry is \emph{effective} if \(G\) acts effectively on \(X\), \emph{parabolic} if \(G\) is semisimple and \(H\) is parabolic, and \emph{reductive} if \(G\) is a linear algebraic group and \(H\) is a reductive linear algebraic subgroup.

\subsection{The tangent bundle in a Cartan geometry}

If \(X\) and \(Y\) are manifolds acted on by a Lie group \(G\), let \(\prodquot{X}{Y}{G}\) be the quotient of \(X \times Y\) by the diagonal \(G\)-action.

\begin{lemma}[Sharpe \cite{Sharpe:1997} p. 188, theorem 3.15]\label{lemma:TgtBundle}
If \(\pi \colon E \to M\) is any Cartan geometry,
say with model \(X=G/H\), then the diagram
\[
\begin{tikzcd}
0 \arrow{r} & \ker \pi'(e) \arrow{r} \arrow{d}{\omega_e} & T_e E \arrow{r} \arrow{d}{\omega_e} & T_m M \arrow{r} \arrow{d} & 0 \\
0 \arrow{r} & \LieH \arrow{r} & \LieG \arrow{r} &
\LieG/\LieH \arrow{r} & 0
\end{tikzcd}
\]
commutes for any points \(m \in M\) and \(e \in E_m\); thus
\[
TM=\prodquot{E}{\pr{\LieG/\LieH}}{H}
\text{ and }
T^*M=\prodquot{E}{\pr{\LieG/\LieH}^*}{H}.
\]
Under this identification, vector fields on \(M\) are identified with \(H\)-equivariant functions \(E \to \LieG/\LieH\), and sections of the cotangent bundle with \(H\)-equivariant functions \(E \to \pr{\LieG/\LieH}^*\).
\end{lemma}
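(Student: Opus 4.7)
The plan is to derive everything directly from the three defining properties of a Cartan connection, noting that the diagram collapses to two straightforward observations: the middle vertical arrow is an isomorphism by axiom~(2), and the restriction to vertical vectors lands in $\LieH$ by axiom~(3). The induced isomorphism on quotients then yields, fiberwise, an identification $T_m M \cong \LieG/\LieH$ depending only on $e \in E_m$, and the dependence on $e$ is controlled by the equivariance axiom~(1), which is what produces the associated-bundle description of $TM$.

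First I would verify commutativity of the left square. The kernel $\ker \pi'(e)$ is the vertical tangent space at $e$. By axiom~(3), the fundamental vector fields $\vec{A}$ for $A \in \LieH$ generate the $H$-action, so at each point $e$ they span the vertical tangent space; since $\vec{A} \hook \omega = A$ by definition, we get $\omega_e(\ker \pi'(e)) \subseteq \LieH$. Combined with $\omega_e$ being a linear isomorphism (axiom~(2)) and $\dim \ker \pi'(e) = \dim H = \dim \LieH$, the restriction is an isomorphism onto $\LieH$. Since the top row of the diagram is the tautological exact sequence coming from $\pi$ and the bottom is the defining exact sequence of $\LieG/\LieH$, and the first two vertical maps are isomorphisms, the snake lemma (or just direct passage to quotients) produces a unique linear isomorphism $\bar\omega_e \colon T_m M \to \LieG/\LieH$ making the whole diagram commute.

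Next I would assemble these fiberwise isomorphisms into the asserted bundle isomorphism $TM \cong E \times_H (\LieG/\LieH)$. Define
\[
\Phi \colon E \times \pr{\LieG/\LieH} \to TM, \quad (e, v + \LieH) \mapsto \pi'(e)\pr{\omega_e^{-1}(v)}.
\]
This is independent of the lift $v \in \LieG$ of $v+\LieH$, since changing $v$ by an element of $\LieH$ changes $\omega_e^{-1}(v)$ by a vertical vector, which is killed by $\pi'(e)$. The main thing to check is $H$-equivariance with respect to the diagonal action $h \cdot (e, v + \LieH) = (r_h e, \Ad_h^{-1}v + \LieH)$. From axiom~(1), $r_h^*\omega_{r_h e} = \Ad_h^{-1}\omega_e$, i.e.\ $\omega_{r_h e}\circ r_h'(e) = \Ad_h^{-1}\circ \omega_e$, so $\omega_{r_h e}^{-1}(\Ad_h^{-1}v) = r_h'(e)\,\omega_e^{-1}(v)$; applying $\pi'$ and using $\pi \circ r_h = \pi$ gives $\Phi(h \cdot (e, v+\LieH)) = \Phi(e, v+\LieH)$. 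Thus $\Phi$ descends to $E\times_H (\LieG/\LieH) \to TM$, and it is a fiberwise isomorphism because $\bar\omega_e$ is, so it is a bundle isomorphism.

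Finally, $T^*M = E \times_H (\LieG/\LieH)^*$ follows by dualizing, using that the contragredient $H$-action on $(\LieG/\LieH)^*$ turns the associated-bundle construction into the dual bundle. The identifications of vector fields and 1-forms with $H$-equivariant functions into $\LieG/\LieH$ and $(\LieG/\LieH)^*$ are the standard translation between sections of an associated bundle $E \times_H V$ and $H$-equivariant maps $E \to V$. The only subtle point in the whole argument is keeping the equivariance twist straight, but this is forced by axiom~(1); no other obstacle arises.
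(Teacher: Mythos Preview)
Your proof is correct and is the standard argument. The paper does not supply a proof of this lemma at all; it merely states it with a citation to Sharpe's textbook (p.~188, theorem~3.15), so there is nothing to compare against beyond noting that your argument is exactly the one Sharpe gives, spelled out in the notation of the present paper.
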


\subsection{Structures}\label{section:structures}

An \emph{\pr{X,G}-pair} on a manifold \(M\) is a \emph{developing map} \(\delta \colon \tilde{M} \to X\), a local diffeomorphism from the universal covering space \(\tilde{M} \to M\), equivariant under a \emph{holonomy morphism} \(h \colon \fundamentalgroup{M} \to G\), a group morphism.
Two \pr{X,G}-pairs \pr{\delta,h} and \pr{\delta',h'} are \emph{equivalent} if \(\pr{\delta',h'}=\pr{g\delta,ghg^{-1}}\) for some \(g \in G\).
An \emph{\pr{X,G}-structure} is an equivalence class of \pr{X,G}-pairs.
An \pr{X,G}-structure induces a flat \pr{X,G}-geometry with bundle \(E = \delta^* G/\fundamentalgroup{M}\) and connection form pulled back from the model.
If \(\pr{X,G}\) is effective then every flat \pr{X,G}-geometry arises from a unique \pr{X,G}-structure.
The reader should be aware of possible terminological confusion between \(G\)-structures and \pr{X,G}-structures.

\section{Extension phenomena}

\begin{longversion}

\begin{theorem}[Matsushima, Morimoto \cite{Matsushima/Morimoto:1960} p. 146 theorem 2]
A complex Lie group with finitely many path components is Stein if and only if the identity component of the center of the identity component is isomorphic to a product \(\pr{\C{\times}}^p \times \C{q}\) for some integers \(p, q \ge 0\).
\end{theorem}

\begin{theorem}[Matsushima, Morimoto \cite{Matsushima/Morimoto:1960} p. 147 theorem 4]\label{theorem:mat.mor.up}
If a holomorphic principal bundle has Stein structure group and Stein base, then it has Stein total space.
\end{theorem}

\begin{theorem}[Matsushima, Morimoto \cite{Matsushima/Morimoto:1960} p. 151 theorem 5, Fischer \cite{Fischer:1969} p. 341]\label{theorem:mat.mor.down}
If a holomorphic principal bundle has reductive structure group and Stein total space, then it has Stein base space.
\end{theorem}
We note that by definition a reductive complex Lie group has finitely many path components.
Matsushima and Morimoto prove their result with the additional hypothesis of connected structure group, but their proof works identically without that hypothesis.

\end{longversion}

\begin{lemma}\label{lemma:bundle.envelope}
Suppose that \(M\) is a domain over a Stein manifold, with envelope of holomorphy \(f \colon M \to \hat{M}\).
Take a reductive Lie group \(G\) and a holomorphic principal bundle \(G \to E \to \hat{M}\).
Then \(E\) is a Stein manifold and \(f^*E\) is a domain over \(E\) with the obvious map \(f^*E \to E\) as envelope of holomorphy.
\end{lemma}
\begin{proof}
\begin{longversion}
By theorem~\vref{theorem:mat.mor.up}, \(E\) is Stein.
\end{longversion}
\begin{shortversion}
By \cite{Matsushima/Morimoto:1960} p. 147 theorem 4, \(E\) is Stein.
\end{shortversion}
The map \(f^*E \to E\) displays \(f^*E\) as a domain over a Stein manifold.
Let \(f^*E \to E'\)  be the envelope of holomorphy.
The map \(f^*E \to E\) factors uniquely through \(E'\), because the holomorphic functions on \(E\) separate points and pullback to \(f^*E\) and then extend to \(E'\).
By uniqueness of the envelope of holomorphy, \(f^*E \to E' \to E\) is \(G\)-equivariant.
Therefore \(G\) acts properly holomorphically and freely on \(E'\), say with quotient \(M'\), a complex manifold factoring \(M \to M' \to \hat{M}\).
\begin{longversion}
By theorem~\vref{theorem:mat.mor.down}, \(M'\) is Stein.
\end{longversion}
\begin{shortversion}
By \cite{Matsushima/Morimoto:1960} p. 151 theorem 5, \(M'\) is Stein.
\end{shortversion}
By uniqueness of the envelope of holomorphy and the factorization \(M \to M' \to \hat{M}\),  \(M' \to \hat{M}\) is a biholomorphism, so \(E'=E\).
\end{proof}

\begin{proposition}%
\label{proposition:vector.bundle.sections}%
Suppose that \(M\) is a domain over a Stein manifold, with envelope of holomorphy \(f \colon M \to \hat{M}\) and that \(V \to \hat{M}\) is a holomorphic vector bundle.
Then every holomorphic section of \(f^* V \to M\) is the pullback of a unique holomorphic section of \(V \to \hat{M}\).
\end{proposition}
\begin{proof}
Let \(\hat{E} \to \hat{M}\) be the associated principal bundle (with structure group \(G=\GL{N,\C{}}\) where \(N\) is the rank of \(V \to \hat{M}\)), and let \(E \defeq f^* \hat{E}\). 
Every holomorphic section of \(f^*V \to M\) is uniquely identified with a \(G\)-equivariant holomorphic map \(E \to \C{N}\).
Each of these maps is pulled back from a unique map on \(\hat{E}\) by lemma~\vref{lemma:bundle.envelope}, so that unique map is \(G\)-equivariant, and thus represents a section of \(V\).
\end{proof}

\begin{longversion}
\begin{lemma}\label{lemma:hypersurfaceIntersections}
Suppose that \(M\) is a domain in a Stein manifold, with envelope of holomorphy \(\hat{M}\). Then \(M \to \hat{M}\) intersects every closed complex hypersurface in \(\hat{M}\).
\end{lemma}
\begin{proof}
Let \(H \subset \hat{M}\) be a closed complex hypersurface.
Henri Cartan's Theorem A (H\"ormander \cite{Hormander:1990} p. 190 theorem 7.2.8) says that for every coherent sheaf \(F\) on any Stein manifold every fiber \(F_m\) is generated by 
global sections. 
Let \(\OO{H}\) be the line bundle corresponding to the hypersurface \(H\).
By Cartan's theorem, for each point \(m \in H\), there is a global section of \(\OO{H}\) not vanishing at \(m\). 
Take such a section, and let \(H'\) be its zero locus.

Henri Cartan's theorem B (H\"ormander \cite{Hormander:1990} p. 199 theorem 7.4.3) says that the cohomology of any coherent sheaf on any Stein manifold vanishes in positive degrees.
By the exponential sheaf sequence, (see H\"ormander \cite{Hormander:1990} p. 201)
\[\cohomology{1}{\hat{M},{\mathcal{O}}^{\times}}=\cohomology{2}{\hat{M},\Z{}}.\]
In other words, line bundles on \(\hat{M}\) are precisely determined by their cohomology classes. 
Therefore a divisor is the divisor of a meromorphic function just when it has trivial
first Chern class. 
In particular, \(H'-H\) is the divisor of a meromorphic function, say \(h \colon \hat{M} \to \C{}\).
So \(h\) is holomorphic on \(\hat{M} \setminus H\), and has poles precisely on \(H\).
If \(H\) does not intersect the image of \(M\) in \(\hat{M}\), then the pullback of \(h\) to \(M\) is holomorphic on \(M\), so is the pullback of a holomorphic function on \(\hat{M}\), i.e. has no poles, so \(H\) is empty.
\end{proof}
\end{longversion}

\begin{lemma}\label{lemma:AtiyahClassZero}
Suppose that \(M\) is a domain over a Stein manifold.
Then every holomorphic principal bundle (and every holomorphic vector bundle) on \(M\) which is pulled back from \(\hat{M}\) admits a holomorphic connection.
\end{lemma}
\begin{proof}
A holomorphic principal (or vector) bundle admits a holomorphic connection if and only if its Atiyah class vanishes \cite{Atiyah:1957}. 
The Atiyah class of each holomorphic principal (or vector) bundle lives in the first cohomology group of a coherent sheaf.
The first cohomology group of any coherent sheaf on any Stein manifold vanishes by 
Henri Cartan's theorem B \cite{Hormander:1990}.
Therefore every holomorphic principal bundle on \(\hat{M}\) admits a holomorphic connection.
Pullback such a connection to the pullback bundle over \(M\).
\end{proof}

\section{First order structures}

Suppose that \(M\) is a complex manifold and that \(V\) is a complex vector space, and that 
\(\dimC{M} =\dimC{V}\). 
The \emph{\(V\)-valued coframe bundle} \(FM\) is the set of all pairs \((m,u)\) where \(m \in M\) and \map[u]{T_m M}{V} is a complex linear isomorphism;  \(FM \to M\) is a principal right \(\GL{V}\)-bundle, under the action \(r_g (m,u) = \pr{m,g^{-1} u}\) for \(g \in \GL{V}\) and \((m,u) \in FM\). 
We suppress mention of \(V\); the reader can then assume that \(V=\C{n}\).
Let \map[p]{FM}{M} be the bundle map \mapto{(m,u)}{m}. 
The \emph{soldering form} of \(FM\) is the 1-form \(\eta \in \nForms{1}{FM} \otimes V\) given by
\(\eta_{(m,u)} = u \circ \pi'(m,u)\).

\subsection{\texorpdfstring{$G$-structures}{G-structures}}

Suppose that \(G\) is a complex Lie group and that \(V\) is a complex \(G\)-module. 
Suppose that \(M\) is a complex manifold, \(\dim M=\dim V\), and \(FM\) is the \(V\)-valued coframe bundle of \(M\).
A \emph{\(G\)-structure} on \(M\) is a holomorphic principal right \(G\)-bundle \(B \to M\) with a \(G\)-equivariant holomorphic bundle map \(B \to FM\).
If we wish to disregard the group \(G\), we will refer to a \(G\)-structure as a \emph{first order structure}.

\begin{longversion}
\subsection{Extension}

\begin{lemma}[Hilbert \cite{Procesi:2007} p. 556]\label{lemma:Hilbert}
Suppose that \(H \subset G\) is a closed reductive linear algebraic subgroup of a complex linear algebraic group. 
There is a finite dimensional \(G\)-module \(W\) and a nonzero vector \(w_0 \in W\) with stabilizer \(H\).
Consequently, \(G/H \subset W\) is an affine variety. 
Specifically, we can take \(W\) to be the dual space of the set of \(H\)-invariant regular functions on \(G\) of degree at most \(k\), for any sufficiently large positive integer \(k\).
\end{lemma}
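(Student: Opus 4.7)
The approach is Hilbert's classical construction: realize $G_0$ as the stabilizer of the evaluation functional at $1 \in G$ inside a finite dimensional $G$-submodule of the $G_0$-invariant regular functions on $G$. Reductivity of $G_0$ enters only to guarantee finite generation of those invariants.

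I would fix a closed embedding $G \subset \GL{N,\C{}}$, which endows the coordinate ring $A = \C{}[G]$ with a degree filtration $\left\{A_{\le k}\right\}_{k \ge 0}$ by polynomial degree in the matrix entries. Two commuting actions live on $A$: the group $G$ acts by left translation $(g \cdot f)(x) = f\left(g^{-1}x\right)$, which preserves the filtration, and $G_0$ acts by right translation $(h \cdot f)(x) = f(xh)$. Let $R = A^{G_0}$ denote the subalgebra of right $G_0$-invariants, i.e.\ the coordinate ring of the quotient $G/G_0$. Because the two actions commute, $R$ is a $G$-submodule of $A$, and it inherits a $G$-stable filtration $R_{\le k} = R \cap A_{\le k}$ by finite dimensional subspaces.

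Now I would invoke Hilbert's finiteness theorem for reductive group invariants: since $G_0$ is reductive, $R$ is a finitely generated $\C{}$-algebra. Choose $k$ large enough that $R_{\le k}$ contains a set of algebra generators of $R$, and set $W^{*} = R_{\le k}$, with $W = \left(W^{*}\right)^{*}$ carrying the dual $G$-action. Define $\operatorname{ev} \colon G \to W$ by $\operatorname{ev}(g)(f) = f(g)$, and let $w_0 = \operatorname{ev}(1)$. A short computation with the definitions shows $\operatorname{ev}$ is $G$-equivariant for left translation on $G$ and the dual $G$-action on $W$. Every $h \in G_0$ fixes $w_0$ because $f(h) = f(1)$ for every right $G_0$-invariant function $f$. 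Conversely, if $g \in G$ fixes $w_0$, then $f(g) = f(1)$ for every $f \in R_{\le k}$; since $R_{\le k}$ generates $R = \C{}[G/G_0]$ as an algebra, this forces $f(g) = f(1)$ for all $f \in R$, so the cosets $gG_0$ and $G_0$ coincide in $G/G_0$, hence $g \in G_0$. Thus the stabilizer of $w_0$ in $G$ is exactly $G_0$.

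Finally, the map $\operatorname{ev}$ factors as $G \twoheadrightarrow G/G_0 \hookrightarrow W$, and the second arrow is dual to the algebra surjection from the symmetric algebra on $W^{*}$ onto $R$; hence it is a closed immersion, and $G/G_0$ is realized as a closed affine subvariety of $W$. The only nontrivial ingredient is Hilbert's finiteness theorem for invariants of a reductive group acting on an affine variety; everything else is formal bookkeeping. The one subtlety one must watch is that $k$ must be chosen large enough not merely so that $R_{\le k}$ separates points but so that it contains algebra generators of $R$, which is exactly what promotes the embedding $G/G_0 \hookrightarrow W$ from injective to closed.
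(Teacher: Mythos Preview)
Your argument is correct and follows exactly the construction the paper indicates in the statement itself: the paper does not supply its own proof but cites Procesi and records that $W$ may be taken as the dual of the $G_0$-invariant regular functions of degree at most $k$, which is precisely your $W = (R_{\le k})^*$ with $w_0$ the evaluation functional at $1$. Your handling of the stabilizer computation and of the closedness of $G/G_0 \hookrightarrow W$ via the surjection $\operatorname{Sym}(R_{\le k}) \twoheadrightarrow R$ is the standard one, so there is nothing to add.
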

\begin{proof}
The categorical quotient \(G\git{H}\) (spectrum of \(H\)-invariant polynomials) of an affine algebraic variety by a reductive algebraic group is an affine algebraic variety, embedded in \(W\) for sufficient large degree \(k\); Procesi \cite{Procesi:2007} p. 556, theorem 2. 
The categorical quotient will in general only parameterize the closed orbits, and admit an equivariant holomorphic submersion \(G/H \to G\git{H}\).
But the \(H\)-orbits in \(G\) are the translates of \(H\), so all orbits are closed, and thus \(G/H=G\git{H}\).
\end{proof}

\end{longversion}

\begin{longversion}
If \(G \subset \GL{V}\) is a closed subgroup, then every \(G\)-structure \map{B}{FM}
is a submanifold of \(FM\).
Quotienting the map \map{B}{FM} by right \(G\)-action yields a holomorphic map \map{M}{FM/G}, a holomorphic section of the holomorphic fiber bundle \map{FM/G}{M}. 
Conversely, given any holomorphic section \map[s]{FM/G}{M}, note that \(FM \to FM/G\) is also a holomorphic principal right \(G\)-bundle, and let \(B \defeq s^* FM\). 
Then \map{B}{FM} is a holomorphic \(G\)-structure. 
If \(G\) is a closed subgroup of \(\GL{V}\), then holomorphic \(G\)-structures are identified with holomorphic sections of \(FM/G \to M\).

Suppose that \map[f]{M_0}{M_1} is a local biholomorphism, that \map[\pi]{B_1}{M_1} is a holomorphic principal right \(G\)-bundle, and that \map[h_1]{B_1}{FM_1} is a holomorphic \(G\)-structure.
Let \(B_0 \defeq f^* B_1\) be the \emph{pullback}, i.e. \(B_0\) is the set of pairs \(\pr{m_0,b_1}\) so that \(\pi\of{b_1}=f\of{m_0}\).
Define a map \map[h_0]{B_0}{FM_0} by \(h_0\of{m_0,b_1}=h_1(b_1) \circ f'\of{m_0}\).
If \(G \subset \GL{V}\) is a closed subgroup, then \(B_1 \subset FM_1\) is a submanifold, and 
\(B_1 = f_1 B_0\), where 
\[
f_1\of{m_0,u_0}=\pr{f\of{m_0},u_0 \circ f'\of{m_0}^{-1}}.
\]

\end{longversion}

\begin{theorem}\label{theorem:reductive.G.structure}
Suppose that \(V\) is a finite dimensional complex vector space and \(G \subset \GL{V}\) is a closed reductive complex algebraic subgroup.
Then every holomorphic \(G\)-structure on a domain \(M\) over a Stein manifold with \(\dimC{M}=\dimC{V}\) is pulled back from a unique holomorphic \(G\)-structure on the envelope of holomorphy \(\hat{M}\) of \(M\).
\end{theorem}
\begin{proof}
\begin{longversion}
By lemma~\vref{lemma:Hilbert}, 
\end{longversion}
\begin{shortversion}
Clearly 
\end{shortversion}
\(\GL{V}\!/G \subset W\) is an affine variety and an orbit of \(\GL{V}\) in a \(\GL{V}\)-module \(W\).
Form the associated vector bundle \(W_M \defeq \prodquot{FM}{W}{\GL{V}}\), containing the fiber subbundle \(E_M \defeq \prodquot{FM}{\pr{\GL{V}\!/G}}{\GL{V}}= FM/G\).
Every \(G\)-structure is a holomorphic section of \(E_M \subset W_M\), so is pulled back from a global section of \(W_{\hat{M}} = \prodquot{F\hat{M}}{W}{\GL{V}}\).
By analytic continuation, the image lies inside the fiber subbundle \(E_{\hat{M}} = \prodquot{F\hat{M}}{\pr{\GL{V}\!/G}}{\GL{V}}\).
Reverse the argument: this section represents a \(G\)-structure on \(\hat{M}\) pulling back to the original one on \(M\).
\end{proof}

\section{Extending by extending bundles}

The \emph{soldering form} of a \(G\)-structure \(B \to FM\) is the pullback of the soldering form on \(FM\).

\begin{longversion}
\begin{lemma}
Suppose that \(E \to M\) is a holomorphic principal bundle over a domain \(M\) over a Stein manifold.
Then \(E \to M\) admits at most one extension to a holomorphic principal bundle over  \(\hat{M}\), up to canonical isomorphism.
\end{lemma}
\begin{proof}
The identity map \(E \to E\) extends to a morphism, as does its inverse.
\end{proof}
\end{longversion}

\begin{theorem}\label{theorem:ExtendByFirstOrderStructure}
Suppose that \(G\) is a complex Lie group and that \(V\) is a finite dimensional complex analytic \(G\)-module.
Suppose that \(M\) is a domain over a Stein manifold with a holomorphic \(G\)-structure \(E \subset FM\).
This \(G\)-structure is pulled back from a unique \(G\)-structure on the envelope of holomorphy \(\hat{M}\) if and only if the holomorphic principal bundle \(E \to M\) is pulled back from a holomorphic principal bundle on \(\hat{M}\).
\end{theorem}
\begin{proof}
Suppose that \(\hat{E} \to \hat{M}\) is a holomorphic principal bundle pulling back to \(E \to M\).
The soldering form \(\eta\) is a holomorphic section of the holomorphic vector bundle \(T^*E \otimes^G \C{n}\) over \(M\), and therefore pulled back from a holomorphic section of \(T^*\hat{E} \otimes^G \C{n} \to \hat{M}\) (also denoted \(\eta\)) by proposition~\vref{proposition:vector.bundle.sections}.
This \(\eta\) vanishes on the fibers of \(E \to M\) so by analytic continuation vanishes on the fibers of \(\hat{E} \to \hat{M}\). 
The set of points \(m \in \hat{M}\) above which \(\eta\) is not of full rank as a linear map \[T_e\hat{E}/T_e\pr{\hat{E}_m} \to \C{n}\] is a closed complex analytic hypersurface.
Every component of every closed complex analytic hypersurface in \(\hat{M}\) intersects the image of \(M\) 
\begin{longversion}
by lemma~\vref{lemma:hypersurfaceIntersections}.
\end{longversion}
\begin{shortversion}
\cite{McKay:2009} lemma~11, p. 8.
\end{shortversion}
But above \(M\), there is no point where \(\eta\) has less than full rank.
Therefore this hypersurface is empty. 
The soldering form, being semibasic, determines a coframe at each point \(e \in \hat{E}\), say \(u \colon T_m M \to \C{n}\) so that \(v \hook \eta_e = u\of{\pi'(e)v}\) for any vector \(v \in T_e E\).
This choice of coframe is an extension of \(E \to FM\) to a map \(\hat{E} \to F\hat{M}\). 
The map \(\hat{E} \to F\hat{M}\) is \(G\)-equivariant, so a \(G\)-structure.
\end{proof}

\begin{lemma}\label{lemma:Cartan.pulled}
Suppose that \(M\) is a domain over a Stein manifold, with envelope of holomorphy \(f \colon M \to \hat{M}\) and holomorphic Cartan geometry \(E \to M\).
The Cartan geometry \(E \subset FM\) is pulled back from a holomorphic Cartan geometry on the envelope of holomorphy \(\hat{M}\) (uniquely up to a unique isomorphism) if and only if the holomorphic principal bundle \(E \to M\) is pulled back from a holomorphic principal bundle on \(\hat{M}\).
\end{lemma}
\begin{proof}
Suppose that the Cartan geometry has model \(\pr{X,G}\) where \(X=G/H\).
Let \(\hat{V}\defeq \prodquot{T^* \hat{E}}{\LieG}{H}\) and let \(V\defeq \prodquot{T^* E}{\LieG}{H}\).
The Cartan connection of \(E\) is the pullback of a unique holomorphic section of \(\hat{V}\) by proposition~\vref{proposition:vector.bundle.sections}, hence to an \(H\)-equivariant \(\LieG\)-valued holomorphic 1-form \(\omega\) on \(\hat{E}\) which pulls back to the Cartan connection on \(E\).
The identity \(\vec{A} \hook \omega=A\) is satisfied on \(E\), so by analytic continuation is satisfied on \(\vec{E}\).
Consider the determinant \(\det \omega\), a holomorphic section of \(\prodquot{(\det T^*\hat{E})}{\det \LieG}{H}\) and let \(Z\) be the zero locus of \(\det \omega\) in \(\hat{M}\).
By definition of a Cartan connection, \(\det \omega \ne 0\) on \(M\), i.e. the hypersurface \(Z\) does not strike the image of \(M\) in \(\hat{M}\).
\begin{longversion}
By lemma~\vref{lemma:hypersurfaceIntersections}, 
\end{longversion}
\begin{shortversion}
By \cite{McKay:2009} lemma~3.24, p. 8,
\end{shortversion}
\(Z\) is empty.
Therefore \(\omega\) on \(\hat{E}\) is a Cartan connection.
\end{proof}

For example, suppose that \(\pr{X,G}\) is a complex homogeneous space and that \(X\) is a domain over a Stein manifold but is not Stein.
As a particular case, we can consider \(X=\C{n}\wo{0}\), \(G=\GL{n,\C{}}\), \(n\ge 2\), so \(\hat{X}=\C{n}\).
Then \(G \to X\) is a holomorphic principal bundle, but does not admit a holomorphic extension to a holomorphic principal bundle on \(\hat{X}\).

\section{Cartan geometries and their induced first order structures}

Suppose that \(G\) is a complex Lie group and that \(H \subset G\) is a closed complex subgroup and \(X=G/H\).
Suppose that \(\map[\pi]{E}{M}\) is a holomorphic \pr{X,G}-geometry with Cartan connection \(\omega\). 
To each point \(e \in E\), we can associate an isomorphism \(\map[\omega_e]{T_e E}{\LieG}\) and an isomorphism \(\map[\omega_e  + \LieH]{T_m M}{\LieG/\LieH}\), where \(m=\pi(e)\)%
\begin{longversion}
, by lemma~\vref{lemma:TgtBundle}.
\end{longversion}
\begin{shortversion}
.
\end{shortversion}
Define \(\map[f]{E}{FM}\) by \(f(e) = \omega_e+\LieH\).
Let \(H_1 \subset H\) be the subgroup of elements of \(H\) which act trivially on \(\LieG/\LieH\). Clearly \(f\) descends to an \(\pr{H/H_1}\)-structure \map[f]{E/H_1}{FM}, the \emph{associated first order structure} of the Cartan geometry.
If \(H/H_1 \subset \GL{\LieG/\LieH}\) is a closed subgroup then the associated first order structure has image an \(H/H_1\)-reduction of \(FM\).

\begin{lemma}\label{lemma:Faith}
Suppose that \((X,G)\) is an effective complex homogenous space and that \(X=G/H\) is connected.
Then every reductive subgroup \(H' \subset H\) acts effectively on \(\LieG/\LieH\).
\end{lemma}
\begin{proof}
Because \(H'\) is reductive, \(\LieG\) splits into a sum of \(H'\)-modules, say \(\LieG = \LieH \oplus \LieH^{\perp}\) with \(\LieH^{\perp} = \LieG/\LieH\) as \(H\)-modules, so \(H'_1\) acts trivially on \(\LieH^{\perp}\).
Therefore the subgroup \(H'_1 \subset H'\) commutes with the subgroup \(G_0 \subset G\) generated by exponentiating \(\LieH^{\perp}\).
Again since \(\LieH^{\perp} = \LieG/\LieH\), \(G_0\) acts on \(X\) with an open orbit, say \(X_0=G_0/H_0 \subset X\), where \(H_0 = G_0 \cap H\). 
Clearly \(H'_1\) fixes every element of \(X_0\). 
Since \(X\) is connected, and \(G\) acts analytically on \(X\), \(H'_1\) fixes all elements of \(X\). 
But \(G\) acts effectively, so \(H'_1=\left\{1\right\}\).
\end{proof}

\begin{corollary}\label{corollary:EequalsEOne}
Suppose that \(G\) is a complex Lie group and \(H \subset G\) is a closed reductive subgroup and that \(X=G/H\) is connected and that \(G\) acts effectively on \(X\).
Suppose that \(E \to M\) is an \pr{X,G}-geometry. 
Then the first order structure induced by that geometry is an embedding \(E \to FM\) as a principal right \(H\)-subbundle.
\end{corollary}

\section{Induced Cartan geometries}

A \emph{morphism} of homogeneous spaces \((X,G) \to \pr{X',G'}\) is a Lie group morphism \(G \to G'\) and an equivariant smooth map \(X \to X'\).
If \(X \to X'\) is a local diffeomorphism, then every \pr{X,G}-geometry \(E \to M\) induces an \pr{X',G'}-geometry \(E' \defeq E \times^H H' \to M\), where \(H \subset G\) and \(H' \subset G'\) are stabilizers of points \(x_0 \in X \mapsto x_0' \in X'\), and with \(\omega'\) being the composition of \(\omega\) with the Lie algebra morphism \(\LieG \to \LieG'\) on the image of the tangent spaces of \(E\), and equal to the Maurer--Cartan 1-form on each fiber of \(E' \to M\).
It is easy to check that there is a unique such 1-form \(\omega'\), and that it is a Cartan connection.

\section{Reductive reductions and the Sharpe geometry}\label{section:Sharpe.geometry}

Suppose that \(G\) is a Lie group with Lie algebra \(\LieG\), \(H \subset G\) a closed subgroup with Lie algebra \(\LieH\) and let \(X\defeq G/H\). 
Suppose that \(H' \subset H\) is a reductive subgroup.
Split \(\LieG\) as an \(H'\)-module, say \(\LieG = \pr{\LieG/\LieH} \oplus \LieH' \oplus \pr{\LieH/\LieH'}\).
Write this splitting as \(A = A_- + A_0 + A_+\) for \(A \in \LieG\).
Let \(\LieG'\defeq \pr{\LieG/\LieH} \oplus \LieH'\) be the unique Lie algebra which has the obvious \(H'\)-module structure, but has \(\left[{\LieG/\LieH},{\LieG/\LieH}\right]=0\).
The \emph{Sharpe space} \pr{X',G'} of \pr{X,G} has \(G'=\pr{\LieG/\LieH} \ltimes H'\), with obvious Lie group structure, and \(X'=G'/H'=\LieG/\LieH\).
If we don't specify \(H'\), take \(H' \subset H\) to be any maximal reductive subgroup (i.e. Levi subgroup).
As an example in the category of real smooth manifolds, the Sharpe space of hyperbolic space is Euclidean space.
For a complex analytic example, the Sharpe space of a smooth affine quadric \(X=\Set{a \in \LieSL{2,\C{}}|\det a = 1}\) under its group of regular algebraic automorphisms \(G = \PSL{2,\C{}}\) is \(X'=\C{3}\), \(G'=\C{\times} \times \C{2}\), affine space acted on by rescaling and translations.

For any \pr{X,G}-geometry, say \(E \to M\), with Cartan connection \(\omega\) and with an \(H'\)-reduction \(E' \subset E\), the \emph{associated Sharpe geometry} is defined as follows: define the bundle of the geometry to be \(E' \to M\) and the Cartan connection of the Sharpe geometry to be \(\omega'=\omega_- + \omega_0\) via the splitting of the Lie algebra \(\LieG\) as above.
Note that \(\omega_0\) is a connection on \(E'\).

On the other hand, given any \pr{X',G'}-geometry, say \(E' \to M\), with Cartan connection \(\omega'\), define a bundle \(E \defeq E' \times^{H'} H\) and a 1-form \(\omega\) to be the unique \(H\)-equivariant 1-form on \(E\) valued in \(\LieG\) which equals \(\omega'\) on \(E'\), i.e. the induced \pr{X,G}-geometry, and \(E' \to M\) is the Sharpe geometry of the reduction \(E' \subset E \to M\).

\begin{lemma}\label{lemma:reductive.geometry.induces.affine.connection}
Suppose that \((X,G)\) is a complex homogeneous space  and \(H' \subset G\) is a reductive subgroup fixing a point of \(X\).
If a complex manifold \(M\) admits a holomorphic \pr{X,G}-geometry \(E \to M\) and a holomorphic \(H'\)-reduction \(E' \subset E\) then \(E \to M\) admits a holomorphic connection and \(M\) admits a holomorphic affine connection.
\end{lemma}
\begin{proof}
The holomorphic connection \(\omega_0\) on \(E'\) induces a holomorphic connection on \(E=\prodquot{E'}{H}{H'}\) and the induced first order structure \(E' \to FM\) induces a holomorphic affine connection on \(M\).
\end{proof}

\section{Extension of reductive geometries}

\begin{theorem}\label{theorem:ExtendFaithful}
Suppose that \((X,G)\) is a complex homogeneous space and \(H \subset G\) is the \(G\)-stabilizer of a point of \(X\), with Lie algebras \(\LieH \subset \LieG\).
Suppose that \(H\) acts faithfully on \(\LieG/\LieH\). 
Suppose that \(M\) is a domain over a Stein manifold, and \(E \to M\) is a holomorphic \pr{X,G}-geometry.
Then the \pr{X,G}-geometry is pulled back from the envelope of holomorphy \(\hat{M}\) if and only if the induced first order structure is pulled back from \(\hat{M}\).
\end{theorem}
\begin{proof}
The Cartan geometry on \(M\) is pulled back from \(\hat{M}\) just when the associated principal \(H\)-bundle \(E \to M\) is pulled back, by lemma~\vref{lemma:Cartan.pulled}.
The bundle of the associated first order structure is \(E/H_1=E\) by corollary~\vref{corollary:EequalsEOne}.
By theorem~\vref{theorem:ExtendByFirstOrderStructure}, the holomorphic principal bundle \(E \to M\) is the pullback of a holomorphic principal bundle on \(\hat{M}\) if and only if the first order structure extends holomorphically to \(\hat{M}\).
\end{proof}

\begin{lemma}\label{lemma:extend.by.reducing}
Suppose that \pr{X,G} is an effective complex homogeneous space with \(X=G/H\) connected and that \(H' \subset H\) is a reductive subgroup.
Suppose that \(E \to M\) is a holomorphic \pr{X,G}-geometry on a domain \(M\) over a Stein manifold.
If \(E \to M\) admits a holomorphic \(H'\)-reduction then the \pr{X,G}-geometry on \(M\) is pulled back from the envelope of holomorphy \(\hat{M}\).
\end{lemma}
\begin{proof}
The Sharpe geometry on the \(H'\)-reduction \(E' \subset E\) is effective and reductive.
By~\vref{theorem:ExtendFaithful}, the Sharpe geometry is pulled back from \(\hat{M}\) just when its underlying reductive first order structure is pulled back from \(\hat{M}\).
By~\vref{theorem:reductive.G.structure}, the underlying first order structure is pulled back from \(\hat{M}\).
Therefore \(E'\) extends to a holomorphic principal \(H'\)-bundle \(E' \to \hat{M}\), and so \(E=\prodquot{E'}{H}{H'}\) extends to a holomorphic principal \(H\)-bundle \(E \to \hat{M}\).
By lemma~\vref{lemma:Cartan.pulled} the \pr{X,G}-geometry extends.
\end{proof}

Theorem~\vref{theorem:Reductive} follows clearly.

\section{Rational homogeneous varieties}

\begin{shortversion}
We take notation and terminology from \cite{Knapp:2002}.
\end{shortversion}
\begin{longversion}
Suppose that \((X,G)\) is a rational homogeneous variety.
The Lie algebra \(\LieP\) of \(P\) contains a unique Cartan subalgebra, say with root system \(\Delta\), with a basis of simple roots so that the root vectors of the positive simple roots lie in \(\LieP\).
Let \(\Delta^+\) be the set of positive roots.
Associate to \(P\) the sets: 
\begin{itemize}
\item[] \(\Delta_P\): the set of roots whose root vectors lie in \(\LieP\),
\item[] \(\Delta_c\): the set of compact roots, i.e. \(\Delta_P \cap -\Delta_P\),
\item[] \(\Delta_N^+\): the set of noncompact positive roots,
\item[] \(\Delta_N^-\): the set of noncompact negative roots.
\end{itemize}
The Dynkin diagram of \(P\), or of \((X,G)\), is that of \(G\) with the compact simple roots marked as dots and the noncompact simple roots marked as crosses.

\begin{lemma}%
\label{lemma:Langlands}
Suppose that \(P \subset G\) is a parabolic subgroup of a connected complex semisimple Lie group. 
Then \(P\) is a connected complex linear algebraic subgroup. 
There are unique connected closed complex linear algebraic subgroups \(M, A, N \subset P\) and \(N^- \subset G\) so that 
\begin{enumerate}
\item \(M\) is a maximal connected semisimple subgroup of \(P\) and
\item \(A\) is an abelian subgroup of \(P\) and
\item \(N\) is the unipotent radical of \(P\) and
\item \(P=MAN\), the \emph{Langlands decomposition}, i.e. every element \(p \in P\) is expressible uniquely as a product \(p = man\) with \(ma \in MA\), \(n \in N\).
Moreover, this decomposition is a biholomorphism \(P = MA \times N\) and 
\item \(1 \to M \cap A \to M \times A \to MA \to 1\) is an finite extension by an abelian group \(M \cap A\) and
\item \(N^-\cap P = \left\{1\right\}\) and 
\item the Lie algebra of \(M\) is the span of the compact coroots and the root spaces of the compact roots and
\item the Lie algebra of \(A\) is the span of the noncompact coroots and
\item the Lie algebra of \(N\) is the sum of the noncompact positive root spaces and
\item the Lie algebra of \(N^-\) is the sum of the noncompact negative root spaces and
\item the compact roots form a root system for \(M\) and
\item the groups \(N\) and \(N^-\) are simply connected.
\end{enumerate}
\end{lemma}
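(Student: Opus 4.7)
The plan is to build the four subgroups from the root-space description of $\LieP$ and then invoke the Levi--Malcev theorem for algebraic groups to globalize the decomposition at the group level. First I would fix a Chevalley basis (Definition~\ref{definition:ChevalleyBasis}) adapted to $\LieH$ and the given ordering of simple roots, so that all positive root spaces lie in $\LieP$. Then
\[
\LieP = \LieH \oplus \bigoplus_{\alpha \in \Delta^0_P \cup \Delta^+_P} \LieG_\alpha.
\]
Define $\LieM$, $\LieA$, $\LieN$, $\LieN^-$ by items (6)--(9) of the statement, decomposing $\LieH$ between $\LieM$ and $\LieA$ using the compact versus noncompact coroot subspaces. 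The Chevalley relations $[\LieG_\alpha,\LieG_\beta]\subset\LieG_{\alpha+\beta}$, together with the fact that sums of noncompact positive (respectively negative) roots remain noncompact positive (respectively negative), show that $\LieN$ and $\LieN^-$ are nilpotent subalgebras. Because the compact roots form a closed root subsystem of $\Delta$, $\LieM$ is reductive; $\LieA$ is abelian by Chevalley axiom~(3). The bracket computations give $[\LieM\oplus\LieA,\LieN]\subset\LieN$, so $\LieN$ is an ideal of $\LieP=\LieM\oplus\LieA\oplus\LieN$ as a direct sum of vector subspaces.

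Next I pass to groups. Because $G$ is complex semisimple it is a complex linear algebraic group, and each of the four subalgebras is the Lie algebra of a unique connected complex algebraic subgroup $M$, $A$, $N$, $N^-$ of $G$. Nilpotence of $\LieN$ and $\LieN^-$ guarantees that $\exp$ is a biholomorphism onto $N$ and $N^-$, giving algebraicity and simple-connectedness of these two unipotent subgroups and thereby item~(11). Items~(6)--(10) hold by construction, and $P$ is connected because it is generated by the connected subgroups $M$, $A$, $N$.

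The core remaining claim is the global Langlands decomposition of item~(4): that multiplication $M\times A\times N\to P$ is a biholomorphism. The Lie algebra direct sum $\LieP=\LieM\oplus\LieA\oplus\LieN$ makes this map a local biholomorphism at the identity, and both sides are connected complex manifolds of equal dimension. Global surjectivity and uniqueness of the factorization are the main obstacle, and I would handle them by appealing to the Levi--Malcev theorem for algebraic groups, which supplies the semidirect decomposition $P=L\ltimes N$ with $L=MA$ the reductive Levi factor and $N$ the unipotent radical of $P$. The further splitting $L=M\cdot A$ as a variety with $M\cap A=\br{1}$ then follows from the specific choice of $\LieA$ as a complement to the compact coroot span inside the Cartan: $M\cap A$ is algebraic with trivial Lie algebra, hence finite, and the complement is chosen so that no nontrivial central element of $M$ lies in $A$. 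Item~(5) is immediate because $N^-\cap P$ is an algebraic subgroup of $N^-$ with Lie algebra $\LieN^-\cap\LieP=0$, hence finite; since $N^-$ is torsion-free (unipotent in characteristic zero), $N^-\cap P=\br{1}$. The maximality of $M$ as a reductive subgroup and $A$ as an abelian subgroup of $P$, together with the identification of $N$ with the unipotent radical, then follow from standard uniqueness properties of the Levi decomposition of algebraic groups, up to conjugation by elements of $N$.
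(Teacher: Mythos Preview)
The paper does not give its own proof of this lemma: it is stated with a citation to Knapp \cite{Knapp:2002} (pp.~374, 481) and used as a black box thereafter. So there is no argument in the paper to compare your proposal against.

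Your outline is a reasonable sketch of the standard structure-theoretic argument (root-space description of $\LieP$, then Levi--Malcev to globalize), and it is broadly the route taken in Knapp's text. Two places deserve more care if you want a self-contained proof. First, your justification that $P$ is connected (``generated by the connected subgroups $M$, $A$, $N$'') is circular: you have not yet shown that $MAN$ exhausts $P$. Connectedness of complex parabolics is a separate fact, usually obtained by showing $P$ equals the normalizer of $\LieP$ and that this normalizer is Zariski-connected. Second, the step $M\cap A=\br{1}$ is asserted rather than proved: you say ``the complement is chosen so that no nontrivial central element of $M$ lies in $A$,'' but this needs an argument. The intersection is a finite central subgroup of both, and ruling out nontrivial finite intersection requires knowing something about the center of $G$ relative to the compact/noncompact coroot decomposition of the Cartan. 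These are genuine, if technical, gaps; the rest of your sketch is sound.
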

The group \(M\) is the complex semisimple Lie group with Dynkin diagram given by deleting the crossed nodes from the Dynkin diagram of \(P\).
\begin{proof}
Parabolic subgroups are connected \cite{Borel:1991} p. 154 theorem 11.16.
Let \(N\) be the unipotent radical of \(P\) and \(H \subset P\) the Cartan subgroup.
Take \(A\) to be the identity component of \(P \cap H\) and let \(L\) be the centralizer of \(A\) in \(G\); \(L\) is a reductive complex linear algebraic subgroup \(L \subset P\) so that \(L \cap N=\left\{1\right\}\), \(A \subset L\) and \(P = L \ltimes N\), an isomorphism of complex linear algebraic groups and so also of complex algebraic varieties \cite{Humphreys:1975} p. 185.
Since \(P\) is connected, \(N\) and \(L\) are also connected.
The Lie algebra of \(L\) is the sum of the Cartan subalgebra and the root spaces of the compact roots \cite{Humphreys:1975} p. 185.

Let \(M=\lb{L}{L}\) be the derived group and let \(\zentrum{L}\) be the center of \(L\).
Because \(L\) is connected, \(M\) is also connected. 
It happens that \(M\) is a closed complex linear algebraic subgroup of \(L\) \cite{Borel:1991} p. 58 section 2.3 unnumbered corollary.
Because \(L\) is reductive, \(L\) is generated by its maximal semisimple \(M\) and its center \(\zentrum{L}\), and \(M \cap \zentrum{L}\) is finite \cite{Humphreys:1975} p. 168.
The map \(\pr{m,a} \in M \times A \mapsto ma \in L\) is a holomorphic group morphism, and an isomorphism of Lie algebras, and \(L\) is connected, so this map is surjective with finite abelian kernel \(M \cap A\).
\end{proof}

\subsection{Roots and characters}\label{subsection:Roots.and.characters}

Given a character \(\chi \in \Hom{P}{\C{\times}}\), \(\chi=1\) on the derived group so \(\chi=0\) on \(\LieM \oplus \LieN\).
Thus the  characters of \(P\) are the characters of \(A\) that are trivial on \(M \cap A\).
Every 1-dimensional \(P\)-module is identified up to isomorphism with a character, having \(P\)-action \(pv=\chi(p)v\) for a unique character \(\chi \in \Hom{P}{\C{}}\).
The characters of \(P\) correspond bijectively to the \(G\)-equivariant holomorphic line bundles on \(X\).
Let \(\alpha_i\) denote the simple roots and let \(\omega_i\) denote the fundamental weights:
\[
\delta_{ij} = 2\frac{\KillingForm{\alpha_i}{\omega_j}}{\KillingForm{\alpha_i}{\alpha_i}}.
\]
Every character \(\chi\) can be written uniquely as
\[
\chi = \sum_i p_i \omega_i,
\]
for some integers \(p_i\), where \(i\) runs over the noncompact simple roots \(\alpha_i\), so \(\chi\of{H_i}=p_i\) on the coroot \(H_i=\check{\alpha}_i\).
On the other hand, any sum \(\sum_i p_i \omega_i\) has some multiple trivial on the finite abelian group \(M\cap A\), so a character.

If \(V\) is a \(P\)-module, the \emph{character} \(\chi_V\) of \(V\) is the character of \(\Lmtop{V}\). 
For example,
\[
\chi=\chi_{\LieN}
=
\sum_{\beta \in \Delta^+_N} \beta.
\]

Inside the Weyl chamber we find the cone of \emph{dominant \(P\)-characters} \(\chi\): those with \(p_i \ge 0\) for all \(i\).
\end{longversion}
\begin{shortversion}
Take a dominant \(P\)-character \(\chi \in \Hom{P}{\C{\times}}\).
\end{shortversion}
If \(\chi=0\) on some coroot \(H\) of a noncompact root \(\alpha\), we extend \(\chi\) to be \(0\) on the root spaces of \(\pm \alpha\), so that \(\chi\) is be defined on a parabolic subgroup \(P_{\chi} \subset G\) with \(P \subset P_{\chi}\) and let \(X_{\chi}\defeq G/P_{\chi}\).
\begin{longversion}
Draw each dominant \(P\)-character \(\chi\) as the Dynkin diagram of \(P_{\chi}\) but with an eigenvalue \(p_i \in \Z{}\) over each noncompact simple root \(\alpha_i\).
The \(G\)-equivariant morphism \(X \to X_{\chi}\) pulls back the associated \(G\)-equivariant line bundle.
Draw \(\chi\) as a vector: a sum of fundamental weights in the weight lattice of \(\LieG\).
The compact roots of \(P_{\chi}\) are those perpendicular to \(\chi\).
The dominant \(P\)-characters span a cone.
A dominant \(P\)-character \(\chi\) lies in the interior of that cone just when \(X_{\chi}=X\).
For \(\chi\) along the walls of the cone, \(X_{\chi}\) goes down in dimension on each wall \(p_i=0\), down further still in dimension on the corners, i.e. more dots in the Dynkin diagram.
Each wall of the dominant \(P\)-character cone corresponds to a cross in the Dynkin diagram of \((X,G)\) becoming a dot in the diagram of \(\pr{X_{\chi},G}\).
The intersection of two walls corresponds to two crosses becoming dots, etc. 
The Langlands decomposition \(P_{\chi}=M_{\chi} A_{\chi} N_{\chi}\) satisfies \(M \subset M_{\chi}\), \(A_{\chi} \subset A\), \(N_{\chi} \subset N\).

\subsection{Chevalley basis}

\begin{definition}\label{definition:ChevalleyBasis}
Suppose that \(G\) is a complex semisimple Lie group and \(H \subset G\) is a Cartan subgroup, with associated Lie algebras \(\LieH \subset \LieG\).
A \emph{Chevalley basis} \cite{Chevalley:1955,Serre:2001} is a basis \(X_{\alpha}, H_{\alpha_i}\) for \(\LieG\) parameterized by roots \(\alpha\) and simple roots \(\alpha_i\) for which
\begin{enumerate}
\item \(\lb{H}{X_{\alpha}}=\alpha(H) X_{\alpha}\) for each \(H \in \LieH\)
\item \(\alpha\of{H_{\beta}}=2 \frac{\KillingForm{\alpha}{\beta}}{\KillingForm{\beta}{\beta}}\) (measuring inner products via the Killing form)
\item \(\lb{H_{\alpha}}{H_{\beta}}=0\),
\item
\[
\lb{X_{\alpha}}{X_{\beta}}=
\begin{cases}
H_{\alpha}, & \text{ if } \alpha+\beta=0, \\
N_{\alpha\beta} X_{\alpha+\beta}, & \text{otherwise} \\
\end{cases}
\]
with
\begin{enumerate}
\item \(N_{\alpha\beta}\) an integer,
\item \(N_{\beta,\alpha}=-N_{\alpha,\beta}\),
\item \(N_{-\alpha,-\beta}=-N_{\alpha\beta}\),
\item If \(\alpha, \beta, \) and \(\alpha+\beta\) are roots, then
\(N_{\alpha\beta}=\pm (p+1)\),
where \(p\) is the largest integer for which \(\beta-p \, \alpha\) is a root,
\item
\(N_{\alpha \beta}=0\) if \(\alpha+\beta = 0\) or if any of \(\alpha, \beta,\) or  \(\alpha+\beta\) is not a root.
\end{enumerate}
\end{enumerate}
\end{definition}

Every Cartan subgroup \(H\) of every complex semisimple Lie group \(G\) has a Chevalley basis \cite{Carter:1972} p. 56 theorem 4.2.1.

\begin{lemma}\label{lemma:NNMinusGenerate}
Suppose that \(G/P\) is a generalized flag variety, with Langlands decomposition \(P=MAN\).
Then \(G\) is generated by \(N^{-} \cup N\) if and only if the maximal connected normal subgroup of \(G\) lying in \(P\) is trivial.
\end{lemma}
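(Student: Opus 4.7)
The plan is to analyze the Lie subalgebra $\Lie{K} \subset \LieG$ generated by $\LieN + \LieN^-$, prove it is an ideal of $\LieG$, decompose it using the simple factors of $\LieG$, and then match against the maximal connected normal subgroup of $G$ lying in $P$. For the ideal property, the Langlands decomposition together with its opposite gives $\LieG = \LieN^- \oplus \LieM \oplus \LieA \oplus \LieN$, and since $\LieN, \LieN^- \subset \Lie{K}$ it is enough to verify that $\LieM + \LieA$ normalizes $\Lie{K}$. The key inclusions $[\LieM + \LieA, \LieN] \subset \LieN$ and $[\LieM + \LieA, \LieN^-] \subset \LieN^-$ come from root theory: every compact root has zero coefficient on the simple roots whose root spaces are not in $\LieP$, while every noncompact positive (respectively negative) root has some strictly positive (respectively negative) such coefficient, so whenever the sum of a compact root and a noncompact root is a root, the signs of the crossed coefficients are preserved; together with the diagonal action of $\LieA$ on root spaces, both inclusions follow. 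A Jacobi-identity induction on iterated brackets of $\LieN$ and $\LieN^-$ then shows $[\LieM + \LieA, \Lie{K}] \subset \Lie{K}$, so $\Lie{K}$ is an ideal.

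Next I would decompose $\Lie{K}$. Write $\LieG = \bigoplus_i \LieG_i$ as a sum of simple ideals. If $\LieG_i \not\subset \LieP$, then some root space $\LieG_\alpha \subset \LieG_i$ lies outside $\LieP$, so $\alpha$ or $-\alpha$ is noncompact positive, whence $\LieG_i \cap (\LieN + \LieN^-) \neq 0$; then $\Lie{K} \cap \LieG_i$ is a nonzero ideal of the simple Lie algebra $\LieG_i$, so it equals $\LieG_i$. If $\LieG_i \subset \LieP$, then every root of $\LieG_i$ is compact and $\LieG_i$ contributes nothing to $\LieN + \LieN^-$, hence nothing to $\Lie{K}$. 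Thus $\Lie{K} = \bigoplus_{\LieG_i \not\subset \LieP} \LieG_i$.

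Finally, because $N$ and $N^-$ are connected analytic subgroups of $G$, the subgroup $\langle N, N^- \rangle$ they generate is the connected analytic subgroup of $G$ with Lie algebra $\Lie{K}$; since $\Lie{K}$ is a sum of simple ideals, this subgroup is closed and normal in $G$, equal to the product of those $G_i$ with $\LieG_i \not\subset \LieP$. Every connected normal subgroup of the connected semisimple group $G$ is likewise a product of simple factors, so the maximal connected normal subgroup of $G$ lying in $P$ is the product of those $G_i$ with $G_i \subset P$. Comparing the two decompositions, $\langle N^-, N \rangle = G$ if and only if no $G_i$ lies in $P$, if and only if the maximal connected normal subgroup of $G$ in $P$ is trivial. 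The main obstacle is establishing the ideal property of $\Lie{K}$, which demands the explicit root-theoretic structure of the parabolic decomposition; once that is in hand, the remaining steps are formal consequences of semisimplicity and the correspondence between connected subgroups and Lie subalgebras.
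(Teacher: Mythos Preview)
Your argument is correct and covers both directions cleanly. The overall architecture matches the paper's: both start by showing that the Lie algebra $\Lie{K}$ generated by $\LieN \oplus \LieN^-$ is stable under $\LieM \oplus \LieA$ (hence an ideal of $\LieG$), and both identify the abstract group $\langle N, N^-\rangle$ with the connected analytic subgroup having Lie algebra $\Lie{K}$. Where you diverge is in the identification of $\Lie{K}$. The paper works root-by-root with a Chevalley basis: for each compact root $\alpha$ it asks whether some noncompact $\beta$ has $\alpha+\beta$ a root, and when not, it invokes the rank-two classification to conclude $\alpha \perp \beta$ for all noncompact $\beta$, so the missing root spaces span a semisimple ideal inside $\LieP$. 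You bypass this entirely by using semisimplicity structurally: once $\Lie{K}$ is an ideal, it is automatically a sum of simple factors, and a one-line argument (a simple factor meets $\LieN\oplus\LieN^-$ iff it is not contained in $\LieP$) pins down which factors appear. Your route is shorter and more conceptual; the paper's route is more explicit about exactly which root spaces lie in $\Lie{K}$, information that is not needed for the lemma itself but fits the paper's Chevalley-basis bookkeeping used elsewhere.
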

\begin{proof}
Let \(G' \subset G\) be the subgroup generated by \(N^{-} \cup N\). 
Since \(N^{-}\) and \(N\) are path connected, \(G'\) is connected, so a Lie subgroup. 
Let \(\LieG''\) be the Lie algebra generated by \(\LieN^- \cup \LieN\).
Since \(\LieN^-\) and \(\LieN\) are complex Lie subalgebras of \(\LieG\), so is \(\LieG''\).
Let \(G''\) be the connected Lie subgroup with Lie algebra \(\LieG''\).
Because \(G''\) has complex Lie algebra, \(G''\) is a complex Lie subgroup of \(G\).
Since \(N^-\) and \(N\) are connected, \(N^-, N \subset G''\).
Clearly the Lie algebra \(\LieG'\) of \(G'\) contains \(\LieN^-\) and \(\LieN\), so contains \(\LieG''\).
Therefore \(G'' \subset G'\).
But \(N^-, N \subset G'\) so \(G' \subset G''\).
Therefore \(G'\) is a complex Lie subgroup of \(G\).

In terms of a Chevalley basis, the Lie algebra \(\LieG'\) of \(G'\) contains all root vectors \(X_{\alpha}\) of all noncompact roots \(\alpha\), and therefore their brackets. 
The noncompact roots, positive or negative, are invariant under bracketing with the 
compact roots, so \(N\) and \(N^{-}\) are each invariant under \(MA\). 
Therefore \(G'\) is invariant under \(MA\). 
Therefore \(\LieG'\) is invariant under the Cartan subgroup of \(G\), and so is a sum of root spaces. 
We only need to identify which root spaces lie in \(\LieG'\). 
So far, we have found that \(\LieG'\) contains all of the root spaces of the noncompact roots.

Suppose that \(\alpha\) is a compact root. 
We need to ask if \(X_{\alpha} \in \LieG'\). 
If we can find a noncompact root \(\beta\) so that \(\alpha+\beta\) is a root, then 
\[
\lb{X_{-\beta}}{X_{\alpha+\beta}}
=
N_{-\beta,\alpha+\beta} X_{\alpha}.
\]
From the definition of Chevalley basis, \(N_{-\beta,\alpha+\beta} \ne 0\) since \(-\beta, \alpha+\beta, \alpha\) are all roots. 
Therefore, for any given compact root \(\alpha\), if there is a noncompact root \(\beta\)
so that \(\alpha+\beta\) is also a root, then \(\alpha+\beta\) is a noncompact root, so \(X_{\alpha} \in \LieG'\). 
Note that if \(\alpha\) is a compact root, and \(\beta\) a noncompact root, then all roots on the \(\alpha\)-string 
\(
\dots, \beta- \alpha, \beta, \beta+ \alpha, \dots
\)
are noncompact too, containing the same (all positive or all negative) integer multiples
of the noncompact simple roots.
In other words, if \(\alpha\) is a compact root and \(X_{\alpha}\) is not in \(\LieG'\), then the \(\alpha\)-string of every noncompact root \(\beta\) has length 1.

Consider the root system of rank 2 generated by \(\alpha\) and \(\beta\). 
Looking at pictures of all root systems of rank 2 \cite{Serre:2001}, we see that \(\alpha\) is perpendicular to \(\beta\).
So the set of compact roots \(\alpha\) for which \(X_{\alpha} \notin \LieG'\) lies on a perpendicular subspace to all of the noncompact roots, and so forms a root subsystem of a semisimple ideal in \(\LieG\).
So \(\LieG = \LieG' \oplus \LieG''\) where \(\LieG'' \subset \LieG\) is a semisimple ideal. Moreover, \(\LieG'\) contains the root spaces of all noncompact roots, so \(\LieG''\) is contained in the sum of the root spaces of the compact roots, i.e. \(G''=e^{\LieG'} \subset MA \subset P\).
By hypothesis, \(\LieG''=0\). 
Therefore \(\LieG'=\LieG\), and so \(G'\) is a connected open subgroup of \(\LieG\). 
Since \(G\) is connected, \(G'=G\).
\end{proof}

\begin{lemma}\label{lemma:MAonN}
Suppose that \(P \subset G\) is a parabolic subgroup of a connected complex semisimple Lie group and with Lie algebras \(\LieP \subset \LieG\) and with Langlands decomposition \(P=MAN\). 
The group \(MA\) normalizes the groups \(N\) and \(N^{-}\). 
In particular \(MA\) acts on \(\mathfrak{n}^{-}\) by restriction of the adjoint representation of \(G\).
Suppose that \(\LieP\) does not contain a nontrivial ideal of \(\LieG\).
Then the subgroup \(\Gamma\) defined by
\(
1 \to \Gamma \to MA \to \GL{\mathfrak{n}^{-}}
\)
is \(\Gamma=\zentrum{G}\cap MA\). In particular, \(\Gamma\) is a subgroup of the center \(\zentrum{G}\) of \(G\) and so is a finite abelian group.
\end{lemma}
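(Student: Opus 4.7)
The plan is to prove the two inclusions $Z(G) \cap MA \subset \Gamma$ and $\Gamma \subset Z(G) \cap MA$, with the second being the substantive direction, and then deduce finiteness.

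The inclusion $Z(G) \cap MA \subset \Gamma$ is automatic: any central element acts trivially under $\Ad$ on all of $\LieG$, hence in particular on $\LieN^-$.

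For the reverse inclusion, take $g \in \Gamma$, so $\Ad_g$ is the identity on $\LieN^-$. The first step is to upgrade this to $\Ad_g$ being the identity on $\LieN$ as well. For this I would use the Killing form $B$ of $\LieG$. From the root space decomposition and the standard fact that $B(\LieG_\alpha, \LieG_\beta) = 0$ unless $\alpha + \beta = 0$, the subspaces $\LieN = \bigoplus_{\alpha \in \Delta^+_P} \LieG_\alpha$ and $\LieN^- = \bigoplus_{\alpha \in \Delta^-_P} \LieG_\alpha$ are in nondegenerate duality under $B$. Since $g \in G$ and $B$ is $\Ad$-invariant, and since $MA$ normalizes both $N$ and $N^-$ (so $\Ad_g$ preserves $\LieN$ and $\LieN^-$), for any $Y \in \LieN$ and $X \in \LieN^-$,
\[
B(Y - \Ad_g Y, X) = B(Y,X) - B(\Ad_g Y, \Ad_g X) = 0,
\]
using $\Ad_g X = X$. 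Then $Y - \Ad_g Y \in \LieN$ pairs trivially with all of $\LieN^-$, so $\Ad_g Y = Y$.

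Now $\Ad_g$ fixes $\LieN^- \cup \LieN$ pointwise. Since $N^-$ and $N$ are connected and have Lie algebras $\LieN^-$ and $\LieN$, it follows that $g$ commutes pointwise with $N^-$ and with $N$, hence with the subgroup they generate. By lemma~\ref{lemma:NNMinusGenerate}, together with the hypothesis that $\LieP$ contains no nontrivial ideal of $\LieG$, this subgroup is all of $G$. Therefore $g \in Z(G)$, and since $g \in MA$ we get $g \in Z(G) \cap MA$, establishing the reverse inclusion.

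Finally, $Z(G)$ is a discrete subgroup of $G$ because $\LieG$ is semisimple and therefore has trivial center; for a connected complex semisimple Lie group this discrete center is in fact finite (it is a quotient of the finite center of the simply connected cover), which gives the finite abelian conclusion. The main obstacle in this plan is the first step—transferring triviality of the action from $\LieN^-$ to $\LieN$—but the Killing form pairing and the $\Ad$-invariance of $B$ make this a short computation rather than a serious difficulty.
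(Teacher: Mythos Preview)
Your proof is correct and follows essentially the same approach as the paper: both use the Killing form duality $\LieN \cong (\LieN^-)^*$ to pass from triviality on $\LieN^-$ to triviality on $\LieN$, and then invoke lemma~\ref{lemma:NNMinusGenerate} to conclude that $\Ad_g$ is trivial on all of $\LieG$. The only cosmetic difference is that the paper phrases the final step at the Lie algebra level (the subalgebra generated by $\LieN^- \cup \LieN$ is $\LieG$, so $\Ad(ma)=I$), whereas you work at the group level ($g$ commutes with the subgroup generated by $N^- \cup N$, which is $G$); these are equivalent.
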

\begin{proof}
If \(ma \in MA\) acts trivially on \(\mathfrak{n}^{-}\), it acts trivially on \(\mathfrak{n}\), since \(\mathfrak{n}=\pr{\mathfrak{n}^{-}}^*\) via the Killing form, so \(ma\) acts trivially on the Lie subalgebra of \(\mathfrak{g}\) generated by \(\mathfrak{n} \cup \mathfrak{n}^{-}\). 
By lemma~\vref{lemma:NNMinusGenerate}, this subalgebra is all of \(\mathfrak{g}\).
Therefore \(\Ad(ma)=I\), so \(ma \in \zentrum{G}\).  
\end{proof}

\begin{lemma}\label{lemma:EffectivityCriteria}
A generalized flag variety \(X=G/P\) is effective if and only if it satisfies both of the conditions:
\begin{enumerate} 
\item \(G\) has trivial center and
\item the Lie algebra of \(P\) contains no nontrivial ideal in the Lie algebra of \(G\).
\end{enumerate}
\end{lemma}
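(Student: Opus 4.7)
The plan is to study the largest normal subgroup $K=\bigcap_{g\in G} gPg^{-1}$ of $G$ contained in $P$, so that effectivity of $G/P$ becomes exactly the statement $K=\{1\}$. I would then split the analysis of $K$ into its identity component $K^0$ (which is controlled by ideals of $\LieG$ inside $\LieP$) and the discrete quotient $K/K^0$ (which is controlled by the center $Z(G)$).

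First I would record the standard dictionary: a connected Lie subgroup of $G$ is normal if and only if its Lie algebra is an ideal of $\LieG$. Since $P$ is connected and closed by lemma~\ref{lemma:Langlands}, every ideal $\mathfrak{i}\subset\LieP$ exponentiates to a connected normal subgroup of $G$ sitting inside $P$, hence inside $K$; conversely $\mathrm{Lie}(K^0)$ is itself such an ideal. Thus $\mathrm{Lie}(K^0)$ is precisely the largest ideal of $\LieG$ contained in $\LieP$. I would also recall the fact that any discrete normal subgroup of a connected Lie group lies in its center.

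Next I would verify the bridge $Z(G)\subset P$. By our setup the Lie algebra $\LieP$ is a sum of root spaces over a fixed Cartan subalgebra $\LieH$, so $\LieH\subset\LieP$, and the Cartan subgroup $H$ with Lie algebra $\LieH$ lies in $P$. Because $Z(G)$ is contained in every Cartan subgroup of a connected complex semisimple Lie group, this yields $Z(G)\subset P$.

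With these pieces the lemma splits cleanly. For the forward direction, $K=\{1\}$ forces $K^0=\{1\}$, so the largest ideal of $\LieG$ contained in $\LieP$ is trivial (condition (2)); and $Z(G)\subset P$ together with the normality of $Z(G)$ in $G$ gives $Z(G)\subset K=\{1\}$ (condition (1)). For the converse, condition (2) forces $K^0=\{1\}$ so $K$ is discrete, and then condition (1) combined with the observation that a discrete normal subgroup of the connected group $G$ lies in $Z(G)=\{1\}$ yields $K=\{1\}$. I do not expect a serious obstacle: the argument is essentially a structural unpacking. The one step that warrants attention is the inclusion $Z(G)\subset P$, which relies on our standing choice of a Cartan subalgebra inside $\LieP$—without that normalization the center need not lie in $P$ at all.
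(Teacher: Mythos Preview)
Your proposal is correct and follows essentially the same approach as the paper: both proofs reduce effectivity to showing that any normal subgroup $K\subset P$ has trivial identity component (via condition (2)) and trivial discrete part (via condition (1) and the fact that discrete normal subgroups of a connected group are central). The only minor difference is in the verification that $Z(G)\subset P$: the paper argues that $P\cdot Z(G)$ is parabolic with the same Lie algebra as $P$, hence equals $P$, whereas you use that $Z(G)$ lies in every Cartan subgroup and that a Cartan subgroup sits inside $P$---both arguments are standard and short.
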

Note that \(G\) has trivial center if and only if \(G\) is in adjoint form, i.e. \(G=\Ad G\).
\begin{proof}
Suppose that \(G/P\) is effective.
The Lie algebra of \(P\) contains a nontrivial ideal in the Lie algebra of \(G\) just when \(P\) contains a positive dimensional subgroup normal in \(G\), which then acts trivially on \(G/P\), so is trivial.
Clearly \(\zentrum{G}\) is discrete, abelian and normal in \(G\). 
We need to see that \(\zentrum{G} \subset P\).
A subgroup \(P\) would remain parabolic if we replaced it by the subgroup \(P\, \zentrum{G}\), and would have the same Lie algebra. 
But every parabolic subgroup is connected, and determined by its Lie algebra \cite{Fulton/Harris:1991}.
So \(\zentrum{G} \subset P\). 
But \(G\) is effective, so \(\zentrum{G}=\left\{1\right\}\).

Now suppose that \(G\) has trivial center and that the Lie algebra of \(P\) contains no
nontrivial ideal in the Lie algebra of \(G\).
Suppose that \(K \subset P\) is a normal subgroup of \(G\). 
So the Lie algebra of \(K\) is an ideal in the Lie algebra of \(G\), and therefore is trivial. 
So \(K\) is a discrete normal subgroup of \(G\). 
If \(g(t)\) is a path in \(G\) with \(g(0)=1\), and \(k \in K\), then \(g(t)kg(t)^{-1} \in K\) is constant.
Since \(G\) is connected, \(gkg^{-1}=k\) for \(g \in G\) and \(k \in K\).
So \(K \subset \zentrum{G}=\left\{1\right\}\). 
\end{proof}

By lemma~\vref{lemma:Faith}, the action of \(MA\) on \(\LieN^-\) is faithul if and only if \(G/P\) is effective.

\end{longversion}

\subsection{Some homogeneous affine spaces}

\begin{lemma}\label{lemma:radical.action}
Take a rational homogeneous variety \pr{X,G}, say \(X=G/P\) with Langlands decomposition \(P=MAN\), and a dominant \(P\)-character \(\chi\) and its associated parabolic subgroup \(P_{\chi}\) with Langlands decomposition \(P_{\chi}=M_{\chi} A_{\chi} N_{\chi}\)%
\begin{longversion}
from section~\vref{subsection:Roots.and.characters}.
\end{longversion}
\begin{shortversion}
.
\end{shortversion}
Extend \(\chi\) to vanish on \(\LieN^-\) and let
\[
\ConnVar \defeq \ConnVar_{\chi} \defeq \Set{\lambda \in \LieG^*|\left.\lambda\right|_{\LieP_{\chi}}=\chi}.
\]
Then \(N_{\chi}\) acts simply transitively on \(\ConnVar\) and the \(P\)-stabilizer of the point \(\chi \in \ConnVar\) is \(P^{\chi}=MA\pr{M_{\chi}\cap N}\).
\end{lemma}
\begin{proof}
The Lie algebra action is \(A \in \LieP, \lambda \in \ConnVar \mapsto -\lambda \circ \Ad{A}\).
Write each element \(A \in \LieP\) as
\[
A = \sum_i A^i H_i + \sum_{\alpha \in \Delta_P} A^{\alpha} X_{\alpha}.
\]
The stabilizer \(P^{\lambda} \subset P\)  has Lie algebra \(\LieP^{\lambda}\) consisting of elements \(A \in \LieP\) for which \(\lambda \circ \Ad{A}=0\).
The Lie algebra \(\LieP^{\lambda}\) is the set of solutions of the linear equations
\[
A^{-\beta} 
=
\frac{\left<\beta,\beta\right>}{2 \left<\chi,\beta\right>}
\pr{
2 A^i \frac{\left<\beta,\alpha_i\right>}{\left<\alpha_i,\alpha_i\right>}
\lambda\pr{X_{\beta}}
+
\sum_{\alpha\ne -\beta}^{\alpha \in \Delta_P}
N_{\alpha\beta} A^{\alpha} \lambda\pr{X_{\alpha+\beta}}
}
\]
for \(\beta \in \Delta_{N_{\chi}}^-\).
Solve inductively in the number of simple roots that sum up to \(\beta\), so that we see that the dimension of the stabilizer \(P^{\lambda}\) is constant as we vary \(\lambda \in \Lambda\).
In particular, \(P^{\chi}\) has Lie algebra \(\LieP^{\chi} = \LieM \oplus \LieA \oplus \pr{\LieN \cap \LieM_{\chi}}\). 
By connectivity, \(MA\pr{N\cap M_{\chi}} \subset P^{\chi}\).

The \(P\)-orbit of \(\chi\) has dimension that of \(\LieP/\LieP^{\chi}=\LieN_{\chi}\), which is the dimension of \(\ConnVar\), so an open orbit.
Since all orbits have equal dimension, they are all open, and since \(\ConnVar\) is connected there is one \(P\)-orbit.
The \(P\)-orbit of \(\chi\) is the \(N_{\chi}\)-orbit of \(\chi\), since \(MA\pr{M_{\chi} \cap N}\) fixes \(\chi\).

Next we prove that the \(N_{\chi}\)-stabilizer of any point is trivial.
The set \(\Gamma \subset N_{\chi}\) of elements of \(N_{\chi}\) fixing a point \(\lambda \in \ConnVar\) is a discrete subgroup. 
The map \(n \in N_{\chi} \mapsto n\lambda \in \ConnVar\) is a \(\Gamma\)-bundle.
The affine space \(\ConnVar\) is connected and simply connected, so \(\Gamma=\left\{1\right\}\), i.e. \(\Lambda \cong N_{\chi}\) as \(N_{\chi}\)-spaces and \(N_{\chi} \cap P^{\chi}=1\).

Suppose that \(p \in P^{\chi} \subset P=MAN\), so split \(p=man\).
We want to prove that \(p \in MA\pr{N \cap M_{\chi}}\).
Without loss of generality, \(p=n \in N\) and \(n\chi=\chi\) so \(n\) preserves \(P_{\chi}\) and so \(n \in P_{\chi}\) 
and so \(n \in \pr{M_{\chi} N_{\chi}}\).
We want to prove that \(n \in M_{\chi}\).
Decompose \(n=n_{\chi} m_{\chi}\) with \(m_{\chi} \in M_{\chi}\) and \(n_{\chi} \in N_{\chi}\).
But  \(m_{\chi}\chi=\chi\) so \(\chi=n\chi=n_{\chi} \chi\) so \(n_{\chi}=1\).
\end{proof}

\subsection{The hemicanonical character}

Suppose that \(X=G/P\) is a generalized flag variety.
\begin{longversion}
Fix a Cartan subalgebra \(\LieH \subset \LieG\) and a Chevalley basis for \(\LieG\) as in definition~\vref{definition:ChevalleyBasis}.
\end{longversion}
Use the Killing form to extend \(\alpha\) from \(\LieH\) to \(\LieG\), by splitting \(\LieG = \LieH + \LieH^{\perp}\), and taking \(\alpha=0\) on \(\LieH^{\perp}\). 
Let \(\delta = \delta_{G/P}\) be 
\[
\delta \defeq \frac{1}{2} \sumnoncptpos{\alpha} \alpha
\]
where the sum is over all noncompact negative roots. 
Clearly \(2 \delta\) is the character of \(\LieN\), since we sum over the weights of the negative noncompact roots.

\begin{lemma}%
\label{lemma:DeltaBeta}%
[Knapp \cite{Knapp:2002} p. 330, %
corollary 5.100, \cite{McKay:2008} lemma 2]
For any rational homogeneous variety, the Killing form inner product
\(\KillingForm{\delta}{\beta}=0\) just when \(\beta\) is a compact root and 
\(\KillingForm{\delta}{\beta}>0\) just when \(\beta\) is a noncompact root.
So \(\delta\) is a dominant \(P\)-character with \(X_{\delta}=X\).
\end{lemma}

\section{Holomorphic connections on line bundles}

If \(P \to E \to M\) is a holomorphic principal bundle, let \(E_1\) be the set of pairs \pr{e,\phi} so that \(e \in E\) and \(\phi \in T^*_e E \otimes \LieP\) satisfies \(\vec{A} \hook \phi=A\) for all \(A \in \LieP\).
The \emph{connection bundle} of \(E\) is \(\nabla E = E_1/P\); holomorphic sections of \(\nabla E \to M\) are holomorphic connections on \(E \to M\).
Similarly if \(V \to M\) is a holomorphic vector bundle, the connection bundle \(\nabla V \to M\) is the connection bundle of the associated principal bundle.
Pick a complex homogeneous space \(\pr{X,G}\), say \(X=G/P\), and a holomorphic \pr{X,G}-geometry \(E \to M\) with Cartan connection \(\omega\).
Take a complex analytic Lie group morphism \(\rho \colon P \to \bar{P}\) to a complex Lie group \(\bar{P}\).
Let 
\[
\ConnVar=\ConnVar_{\rho}\defeq \Set{a \in \LieG^* \otimes \bar\LieP|\left.a\right|_{\LieP}=\rho}.
\]
Any connection \(\phi\) on  \(\bar{E} = \prodquot{E}{\bar{P}}{P}\) pulls back to a 1-form on \(E\), which we can write as \(\phi=a \circ \omega\), for a unique \(H\)-equivariant map \(a \colon E \to \ConnVar\); identify connections on \(\bar{E}\) with maps \(a\), i.e. \(\nabla \bar{E} = \prodquot{E}{\ConnVar}{P}\).

\begin{proposition}\label{proposition:canonical.connection}
Suppose that \pr{X,G} is an effective rational homogeneous variety, say \(X=G/P\), with Langlands decomposition \(P=MAN\). 
Take a holomorphic \(\pr{X,G}\)-geometry \(E \to M\) on a complex manifold \(M\) and to each dominant \(P\)-character \(\chi \in \Hom{P}{\C{\times}}\) associate the holomorphic line bundle \(L_{\chi}=E \times^{\chi} \C{}\).
Let \(P^{\chi}\defeq MA\pr{M_{\chi}\cap N} \subset P\).
There is a natural isomorphism of bundles of affine spaces \(E/P^{\chi} = \nabla L_{\chi}\).
The bundle \(E\) admits a \(P^{\chi}\)-reduction if and only if \(L_{\chi}\) admits a holomorphic connection, i.e. if and only if the Atiyah class of \(L_{\chi}\) vanishes.
\end{proposition}
\begin{proof}
As above, we identify \(\nabla L_{\chi} = \prodquot{E}{\ConnVar_{\chi}}{P}\) where
\[
\ConnVar_{\chi}=\Set{a \in \LieG^*|\left.a\right|_{\LieP}=\chi}.
\]
By lemma~\vref{lemma:radical.action}, \(P\) acts transitively on \(\ConnVar\) with \(P\)-stabilizer of \(\chi\) equal to \(P^{\chi}\), i.e.\(\prodquot{E}{\ConnVar}{P}=E/P^{\chi}\).
Hence a \(P^{\chi}\)-reduction of \(E\) is precisely a section of \(E/P^{\chi}=\nabla L_{\chi}\), i.e. precisely a holomorphic section of \(\nabla L_{\chi}\).
\end{proof}

For example, a holomorphic projective connection on a complex manifold arises from a holomorphic affine connection just when the canonical bundle admits a holomorphic connection.

\newcommand{\trivChars}{\ensuremath{\Sigma}}

\begin{lemma}
Suppose that \(\pr{X,G}\) is a rational homogeneous variety and \(E \to M\) is a holomorphic \((X,G)\)-geometry on a complex manifold \(M\).
The following are equivalent:
\begin{enumerate}
\item
The canonical bundle of \(M\) admits a holomorphic connection.
\item
The tangent bundle of \(M\) admits a holomorphic connection.
\item
The bundle \(E \to M\) admits a holomorphic connection
\item
The \((X,G)\)-geometry \(E \to M\) is induced by a reductive holomorphic Cartan geometry.
\item
One of the line bundles \(L_{\chi} \to M\), for some \(P\)-character \(\chi\) positive on all noncompact coroots, admits a holomorphic connection.
\item
All of the line bundles \(L_{\chi} \to M\) for all \(P\)-characters \(\chi\) admit holomorphic connections.
\item
The dominant \(P\)-characters \(\chi\) for which \(L_{\chi} \to M\) admits a holomorphic connection are not all contained in a single wall (or intersection of walls) of the cone of dominant \(P\)-characters.
\end{enumerate}
\end{lemma}
In particular, if \(M\) is compact and K\"ahler with trivial canonical bundle and a holomorphic parabolic geometry, then after perhaps replacing \(M\) by a finite \'etale covering space, \(M\) is a complex torus and \(E \to M\) is a translation invariant \((X,G)\)-geometry; see \cite{McKay:2008} p. 8 theorem 3 for a complete classification of these.
\begin{proof}
Let \(\trivChars\) be the kernel of the homomorphism
\[
\chi \in \Hom{P}{\C{\times}} \mapsto a\pr{L_{\chi}} \in \cohomology{1}{M,\Omega^1}.
\]
In other words, the group \(\trivChars\) consists of all characters \(\chi \in \Hom{P}{\C{\times}}\) whose associated line bundle \(L_{\chi} \to M\) admits a holomorphic connection.
Pick a dominant \(P\)-character in \(\trivChars\), say \(\chi=\sum p_i \omega_i\) in a basis of fundamental weights, with as many positive \(p_i\) as possible.
There is such a \(\chi\), because the sum of any two \(P\)-characters is a \(P\)-character.
Recall that \(P^{\chi} = M A \pr{N \cap M_{\chi}}\).
By proposition~\vref{proposition:canonical.connection}, there is a holomorphic \(P^{\chi}\)-reduction \(E^{\chi} \subset E\).
Conversely, every such reduction gives a holomorphic connection on \(L_{\chi}\).
If \(X_{\chi}=X\), then \(P^{\chi}=MA\) is a reductive complex Lie group, so \(E^{\chi} \to M\) is a Sharpe geometry (see section~\vref{section:Sharpe.geometry}) so \(E^{\chi} \to M\) admits a holomorphic connection, which induces a holomorphic connection on \(E \to M\) and on all \(L_{\chi}\) and on \(TM\).
If \(X_{\chi} \ne X\), then all \(\chi \in \Sigma\) are perpendicular to some simple root \(\alpha_i\), so all of the dominant ones lie in some wall.
\end{proof}

We prove theorem~\vref{theorem:Parabolic}.
\begin{proof}
Suppose that \(E \to M\) is a holomorphic \(\pr{X,G}\)-geometry on a domain \(M\) over a Stein manifold.
By lemma~\vref{proposition:canonical.connection}, \(E \to M\) admits a reductive reduction just when the canonical bundle of \(M\) admits a holomorphic connection.
By lemma~\vref{lemma:AtiyahClassZero}, every holomorphic vector bundle on a domain over a Stein manifold which is pulled back from the Stein manifold admits a holomorphic connection.
The canonical bundle of \(M\) is the pull back of the canonical bundle of the Stein manifold.
By lemma~\vref{lemma:extend.by.reducing}, every \pr{X,G}-geometry extends from \(M\) to its envelope of holomorphy.
\end{proof}

\begin{longversion}

\section{Families of first order structures and geometries transverse to a family of foliations}

A \emph{family of foliated complex manifolds} is a pair of nowhere singular holomorphic foliations \(\fol^-, \fol^+\) of a complex manifold \(M\), so that each leaf of \(\fol^-\) lies in a single leaf of \(\fol^+\).
We identify any foliation \(\fol\) with the set of tangent vectors tangent to its leaves, so \(\fol^- \subset \fol^+ \subset TM\).
The \emph{frame bundle} of the pair \(\fol=\pr{\fol^-,\fol^+}\) is the principal right \(\GL{n,\C{}}\)-bundle \(F=\Fr{\fol} \to M\) whose elements are pairs \((m,u)\) where \(m \in M\) and \(u \colon \fol^+_m/\fol^-_m \to \C{n}\) a linear isomorphism.
Suppose that \(\rho \colon G \to \GL{n,\C{}}\) is a complex analytic representation of a complex
Lie group \(G\).
A \emph{holomorphic family of transverse \(G\)-structures} is a family \(\fol\) of foliated complex manifolds and a holomorphic principal right \(G\)-bundle \(E \to M\) and a \(G\)-equivariant holomorphic bundle map \(E \to \Fr{\fol}\).

If \(p \colon E \to M\) is a holomorphic principal bundle and \(\fol=\pr{\fol^-,\fol^+}\) is a family of foliated manifolds on \(M\), let \(\fol^-_E, \fol^+_E\) be the foliations on \(E\) whose leaves are the preimages of the leaves of \(\fol^-, \fol^+\), and write \(\fol^-, \fol^+\) as \(\fol^-_M, \fol^+_M\) for clarity.
Suppose that \(E \to \Fr{\fol_M}\) is a family of \(G\)-structures.
Pick any \(e \in E\) and suppose that the bundle map \(E \to \Fr{\fol_M}\) takes \(e \in E \mapsto (m,u)\in \Fr{\fol_M}\).
The \emph{soldering form} of the family of \(G\)-structures is the holomophic section \(\eta\) of \(\pr{\fol^+_E/\fol^-_E}^* \otimes \C{n}\) defined by \(v \hook \eta = u\of{p'(e)v}\) for any \(v \in \fol^+_{E,e}\).

Suppose that \(X=G/H\) is a complex homogeneous space.
A \emph{family of \pr{X,G}-geometries} (also called a \emph{family of Cartan geometries} modelled on \((X,G)\)) on a family \(\fol_M=\pr{\fol^-_M,\fol^+_M}\) of foliated complex manifolds is a holomorphic principal right \(H\)-bundle \(E \to M\) and a section \(\omega\) of \(\pr{\fol_E^+/\fol_E^-}^* \otimes \mathfrak{g}\), called the \emph{Cartan connection}, satisfying the following conditions.
\begin{enumerate}
\item
Denote the right action of \(g \in G\) on \(e \in E\) by \(r_g e=eg\). 
The Cartan connection transforms in the adjoint representation:
\(
r_g^* \omega = \Ad_g^{-1} \omega.
\)
\item
On each leaf \(\Lambda^+\) of \(\fol^+_E\), treating \(\omega\) as a 1-form on \(\Lambda^+\), \(0= v \hook d \omega\) for all \(v \in \fol^-_E\).
\item
\(\omega_e \in \pr{\fol^+_{E,e}/\fol^-_{E,e}}^* \otimes \mathfrak{g}\) is a linear isomorphism at each point
\(e \in E\).
\item
For each \(A \in \mathfrak{g}\), define a section \(\dot{A}\) of \(\fol^+_E/\fol^-_E\) over \(E\) by the equation \(\dot{A} \hook \omega = A\). 
Define vector fields \(\vec{A}\) on \(E\), for \(A \in \mathfrak{h}\), generating the right \(H\)-action:
\[
\vec{A} = \left. \frac{d}{dt} r_{e^{tA}} \right|_{t=0}.
\]
Then \(\vec{A} + \fol^-_E = \dot{A}\).
\end{enumerate}
If \(G\) is a complex semisimple Lie group and \(X=G/P\) is a rational homogeneous variety, then a family of \pr{X,G}-geometries is called a \emph{family of parabolic geometries}.

Recall the foliation of \(\C{n}\wo\br{0}\) by radial lines.
Clearly foliations, and even holomorphic submersions, do not generally arise as pullbacks from envelopes of holomorphy. 
Therefore we will pose the Hartogs extension problem only for holomorphic foliations which are already assumed to be pulled back.

\begin{theorem}\label{theorem:ExtendByFirstOrderStructureFamily}
Suppose that \(G \subset \GL{n,\C{}}\) is a closed complex Lie subgroup.
Suppose that \(M\) is a domain over a Stein manifold and \(\fol^-_M \subset \fol^+_M\) are nowhere singular holomorphic foliations of \(M\).
Suppose that \(E \subset \Fr{\fol_M}\) is a holomorphic family of \(G\)-structures on \(M\).
Then the family is pulled back from a unique holomorphic family of \(G\)-structures on the envelope of holomorphy \(\hat{M}\) if and only if the holomorphic principal bundle \(E \to M\) is the pullback of a holomorphic principal bundle on \(\hat{M}\).
\end{theorem}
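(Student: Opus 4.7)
The plan is to mimic the proof of theorem~\ref{theorem:ExtendByFirstOrderStructure}, replacing the tangent bundle $TE$ by the foliated tangent bundle $FE$ throughout. The only-if direction is immediate by restriction. For the if direction, assume a holomorphic principal right $G$-bundle $\hat{E} \to \hat{M}$ extends $E \to M$. The remark preceding the theorem instructs us to assume that $FM$ already extends to a nowhere singular holomorphic foliation $F\hat{M}$ of $\hat{M}$; then the pullback foliation $F\hat{E}$ on $\hat{E}$ is a holomorphic subbundle of $T\hat{E}$ restricting to $FE$ over $M$.

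Next I would extend the soldering form. The soldering form $\eta$ is a $G$-equivariant holomorphic section of $FE^* \otimes \C{n}$ on $E$. By $G$-equivariance it descends to a holomorphic section of the holomorphic vector bundle $FE^* \otimes_G \C{n}$ on $M$, which is the restriction of the corresponding holomorphic vector bundle $F\hat{E}^* \otimes_G \C{n}$ on $\hat{M}$. Proposition~\ref{proposition:vector.bundle.sections} then yields a unique holomorphic extension to $\hat{M}$, whose pullback to $\hat{E}$ is a $G$-equivariant holomorphic section $\hat{\eta}$ of $F\hat{E}^* \otimes \C{n}$.

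Two analytic-continuation checks complete the argument. First, $\hat{\eta}$ must vanish on the kernel of $F\hat{E} \to F\hat{M}$, that is, on the directions in $F\hat{E}$ tangent to the fibers of $\hat{E} \to \hat{M}$; these linear functionals vanish identically on $E$ by construction of the soldering form, and their common zero locus is a closed analytic subset of $\hat{E}$, hence is all of $\hat{E}$. Second, $\hat{\eta}$ must induce a linear isomorphism $F_m \hat{M} \to \C{n}$ at every point $m \in \hat{M}$; the failure locus is a closed analytic hypersurface in $\hat{M}$, and by \cite{McKay:2009} p. 8 lemma 11 every component of such a hypersurface must meet $M$, where the form is of full rank, so the hypersurface is empty. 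The extended $\hat{\eta}$ then determines a $G$-equivariant holomorphic bundle map $\hat{E} \to \Fr{F\hat{M}}$, sending each $e$ to the coframe on $F_{p(e)}\hat{M}$ induced by $\hat{\eta}_e$, and $G$-equivariance persists from $M$ to $\hat{M}$ by analytic continuation. The main obstacle will be the full-rank step, whose success rests on the hypersurface-intersection lemma from \cite{McKay:2009}, exactly as in the non-foliated theorem~\ref{theorem:ExtendByFirstOrderStructure}.
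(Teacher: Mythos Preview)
Your proposal is correct and follows essentially the same route as the paper: extend the soldering form as a section of the associated vector bundle $FE^* \otimes_G \C{n}$ via proposition~\ref{proposition:vector.bundle.sections}, use analytic continuation to verify semibasicness on the fibers, and invoke the hypersurface-intersection lemma from \cite{McKay:2009} to ensure full rank everywhere on $\hat{M}$. The only cosmetic difference is that you spell out the persistence of $G$-equivariance by analytic continuation, whereas the paper obtains it directly from having extended the section of the associated (quotient) bundle.
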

\begin{proof}
We just make the obvious modifications to the proof we gave above for a single \(G\)-structure.
Suppose that \(E \to M\) is the pullback of \(\hat{E} \to \hat{M}\).
(It is true that \(\hat{E}\) is the envelope of holomorphy of \(E\), although we won't need to know this, so we leave the reader to check it.)
The soldering form is a holomorphic section of the holomorphic vector bundle \(\pr{\fol^+_E/\fol^-_E}^* \otimes^G \C{n}\) over \(M\), and therefore is the pullback of a holomorphic section of \(\pr{\fol^+_{\hat{E}}/\fol^-_{\hat{E}}}^* \otimes^G \C{n}\) on \(\hat{M}\) by proposition~\vref{proposition:vector.bundle.sections}.
This section vanishes on the fibers of \(E \to M\), so by analytic continuation vanishes on all of the fibers of \(\hat{E} \to \hat{M}\). 
The set of points \(e \in \hat{E}\) at which this section is not of full rank as a linear map is a complex analytic hypersurface of \(\hat{E}\).
Being \(G\)-invariant, this hypersurface projects to a complex analytic hypersurface of \(\hat{M}\).
Every component of this hypersurface has to intersect \(M\) by lemma~\vref{lemma:hypersurfaceIntersections}.
Therefore this hypersurface is empty. 
The soldering form, being semibasic, determines a coframe at each point \(e \in \hat{E}\), say \(u \in \pr{\fol^+_{M,m}/\fol^-_{M,m}}^* \otimes \C{n}\) so that \(v \hook \eta_e = u\of{\pi'(e)v}\) for any vector \(v \in \fol^+_{\hat{E},e}/\fol^-_{\hat{E},e}\).
This choice of coframe is an extension of our map to the coframe bundle. 
The map is \(G\)-equivariant, so is a family of \(G\)-structures.
\end{proof}

\begin{theorem}\label{theorem:extendByBundle}
Suppose that \(H \subset G\) is a closed complex Lie subgroup of a complex Lie group and \(X=G/H\).
Suppose that \(M\) is a domain over a Stein manifold and \(\fol^-_{\hat{M}} \subset \fol^+_{\hat{M}}\) are nowhere singular holomorphic foliations of the envelope of holomorphy \(\hat{M}\) of \(M\).
Suppose that \(E \to M\) is a holomorphic family of \pr{X,G}-geometries over the pullback foliations \(\fol^-_M \subset \fol^+_M\).
Then the family of geometries is pulled back from the envelope of holomorphy \(\hat{M}\) if and only if the holomorphic principal bundle \(E \to M\) is pulled back from a holomorphic principal bundle on \(\hat{M}\).
\end{theorem}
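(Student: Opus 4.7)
The plan is to mirror the proof of Theorem~\ref{theorem:ExtendByFirstOrderStructure}, but extending the whole Cartan connection rather than only the soldering form. The forward direction (extension of the family of geometries trivially extends the principal bundle) is immediate, so I focus on the converse.

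Assume that $\hat{E}\to\hat{M}$ is a holomorphic principal $H$-bundle extending $E\to M$. Because $F\hat{M}$ is given on $\hat{M}$ and $\hat{E}\to\hat{M}$ is a principal bundle, we obtain a holomorphic foliation $F\hat{E}$ on $\hat{E}$ whose leaves are the preimages of leaves of $F\hat{M}$. The Cartan connection $\omega$ is an $H$-equivariant holomorphic section of $FE^*\otimes\LieG$ on $E$ and thus descends to a holomorphic section of the holomorphic vector bundle $FE^*\otimes_H\LieG$ over $M$. This vector bundle over $M$ is the restriction of the holomorphic vector bundle $F\hat{E}^*\otimes_H\LieG$ over $\hat{M}$; by Proposition~\ref{proposition:vector.bundle.sections}, the section $\omega$ extends uniquely to a holomorphic section of $F\hat{E}^*\otimes_H\LieG$ over $\hat{M}$. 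Pulling back to $\hat{E}$ gives an $H$-equivariant holomorphic 1-form $\hat{\omega}$ along the foliation $F\hat{E}$, valued in $\LieG$, which agrees with $\omega$ on $E$.

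It remains to verify that $\hat{\omega}$ satisfies the three axioms of a Cartan connection for the family. The equivariance $r_h^*\hat{\omega}=\Ad_h^{-1}\hat{\omega}$ holds by construction. The identity $\vec{A}\hook\hat{\omega}=A$ for $A\in\LieH$ holds on $E$ since $\omega$ is a Cartan connection, and by analytic continuation it persists on all of $\hat{E}$. The only substantive point is the pointwise isomorphism $\hat{\omega}_e\colon F_e\hat{E}\to\LieG$: the leafwise dimensions of $F\hat{E}$ and of $\LieG$ agree (both equal $\dim FM+\dim H$), so the locus where $\hat{\omega}_e$ fails to be an isomorphism is the vanishing locus of a holomorphic determinant, hence a (possibly empty) complex analytic hypersurface in $\hat{E}$. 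Being $H$-invariant, it descends to a hypersurface in $\hat{M}$; by \cite{McKay:2009} p.~8 lemma 11 every component of a closed hypersurface in $\hat{M}$ meets $M$, but over $M$ the map $\omega_e$ is an isomorphism everywhere. So the hypersurface is empty, and $\hat{\omega}$ is a Cartan connection for the family $F\hat{M}$ on $\hat{E}\to\hat{M}$.

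The main obstacle is precisely this last rank step: one needs the isomorphism condition (a pointwise, open condition) to propagate from $M$ to $\hat{M}$, and this is where the analytic-hypersurface argument plus the Hartogs-type intersection lemma does all the work. Everything else—equivariance, the axiom on fundamental vector fields, and the existence of the extension as a section—is formally the same as in the single-geometry case, with $T^*E$ replaced by $FE^*$ throughout.
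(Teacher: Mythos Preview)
Your proof is correct and follows essentially the same approach as the paper: extend $\omega$ as a section of $F\hat{E}^*\otimes_H\LieG$ via Proposition~\ref{proposition:vector.bundle.sections}, verify the axiom on fundamental vector fields by analytic continuation, and use the hypersurface-intersection lemma to propagate the isomorphism condition. The only cosmetic difference is that the paper checks nondegeneracy of the induced map to $\LieG/\LieH$ rather than of $\hat{\omega}$ itself, but combined with the $\vec{A}\hook\hat{\omega}=A$ identity these are equivalent.
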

\begin{proof}
Suppose that \(E \to M\) is pulled back from a holomorphic principal bundle \(\hat{E} \to \hat{M}\). 
The Cartan connection is a holomorphic section of \(\pr{\fol^+_E/\fol^-_E}^* \otimes^H \LieG \to M\), and therefore is pulled back from a holomorphic section \(\hat{\omega}\) of \(\pr{\fol^+_{\hat{E}}/\fol^-_{\hat{E}}}^* \otimes^H \LieG \to \hat{M}\) by proposition~\vref{proposition:vector.bundle.sections}.
The set of points \(e \in E\) at which the linear map
\[
v \in T_e \hat{E} \mapsto v \hook \hat{\omega} + \LieH \in \LieG/\LieH
\] 
is not a linear isomorphism is a closed complex analytic hypersurface of \(\hat{M}\).
Every component of this hypersurface has to intersect \(M\) by lemma~\vref{lemma:hypersurfaceIntersections}.
Therefore this hypersurface is empty. 
The object \(\omega\) satisfies \(\vec{A} \hook \omega=A\) for any \(A \in \LieH\) where \(\vec{A}\) is the associated generator of the infinitesimal \(\LieH\)-action on \(E\). 
The analogous statement on \(\hat{E}\) holds by analytic continuation.
Therefore \(\hat{\omega}\) is a Cartan connection.
\end{proof}

\begin{theorem}\label{theorem:ExtendFaithfulFamily}
Suppose that \(G\) is a complex Lie group and \(H \subset G\) is a is a closed complex subgroup, with Lie algebras \(\LieH \subset \LieG\) and let \(X\defeq G/H\).
Suppose that \(H\) acts faithfully on \(\LieG/\LieH\).
Suppose that \(M\) is a domain over a Stein manifold and \(\fol^-_{\hat{M}} \subset \fol^+_{\hat{M}}\) are nowhere singular holomorphic foliations on the envelope of holomorphy \(\hat{M}\) of \(M\).
Suppose that \(E \to M\) is a holomorphic family of \pr{X,G}-geometries over the pullback foliations \(\fol^-_M \subset \fol^+_M\).
Then the family of geometries is a pullback from the envelope of holomorphy \(\hat{M}\) if and only if the associated first order structure is the pullback of a holomorphic first order structure on \(\hat{M}\).
\end{theorem}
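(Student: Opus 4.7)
The plan is to imitate the proof of Theorem \vref{theorem:ExtendFaithful}, substituting the family analogues of its ingredients that have been developed earlier in this section; the argument reduces to two equivalences, each already established. First, by Theorem \vref{theorem:extendByBundle}, the holomorphic family of $(G,X)$-geometries $E \to M$ over $FM$ extends to a holomorphic family over $F\hat{M}$ if and only if the underlying holomorphic principal $H$-bundle $E \to M$ extends to a holomorphic principal $H$-bundle on $\hat{M}$. The hypothesis that the foliation $F\hat{M}$ already exists on $\hat{M}$ is precisely what makes this reduction available; without it the target bundle $\Fr{F\hat{M}}$ would not even be defined.

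Second, I would construct the associated family of first order structures in direct analogy with the single-geometry case. For each $e \in E$ over $m = \pi(e)$, the Cartan connection $\omega$ gives an isomorphism $F_e E \to \LieG$ which, on quotienting by $\LieH$, descends to an isomorphism $F_m M \to \LieG/\LieH$; these fit together into a holomorphic bundle map $f \colon E \to \Fr{FM}$. Because $H$ acts faithfully on $\LieG/\LieH$, the kernel $H_1$ of the $H$-action on $\LieG/\LieH$ is trivial, so $f$ itself is $H$-equivariant and exhibits $E$ as a holomorphic family of $H$-structures; this is the family analogue of Corollary \vref{corollary:EequalsEOne}. Theorem \vref{theorem:ExtendByFirstOrderStructureFamily} then says that this family of $H$-structures extends holomorphically to $F\hat{M}$ if and only if the principal bundle $E \to M$ extends to $\hat{M}$. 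Chaining this with the first equivalence yields the theorem.

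I do not expect a genuine obstacle. The one point worth checking is that $\omega$ is only a section of $FE^* \otimes \LieG$ rather than $T^*E \otimes \LieG$, so the associated first order structure lives in $\Fr{FM}$ rather than $FM$; but the quotient map $\LieG \to \LieG/\LieH$ still sends $\omega$, restricted to the foliation directions, to a well-defined isomorphism $F_m M \to \LieG/\LieH$, and every other step of the original construction respects this restriction, so the argument transports verbatim.
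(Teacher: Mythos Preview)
Your proposal is correct and follows essentially the same approach as the paper's proof: reduce to extension of the principal $H$-bundle via Theorem~\ref{theorem:extendByBundle}, observe that faithfulness of $H$ on $\LieG/\LieH$ forces $H_1=\{1\}$ so the bundle of the associated family of first order structures is $E$ itself, and then invoke Theorem~\ref{theorem:ExtendByFirstOrderStructureFamily} to close the chain of equivalences. The paper's proof is simply more terse, recording only the first reduction and the identity $E/H_1=E$ and leaving the final appeal to Theorem~\ref{theorem:ExtendByFirstOrderStructureFamily} implicit.
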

\begin{proof}
The family of Cartan geometries on \(M\) is pulled back from\(\hat{M}\) just when the associated principal \(H\)-bundle \(E \to M\) is pulled back by theorem~\vref{theorem:extendByBundle}.
The bundle of the associated first order structure is \(E/H_1=E\).
\end{proof}

\begin{theorem}
Suppose that \(M\) is a domain over a Stein manifold and \(\fol^-_{\hat{M}} \subset \fol^+_{\hat{M}}\) are nowhere singular holomorphic foliations of the envelope of holomorphy \(\hat{M}\) of \(M\).
Then every holomorphic family of effective reductive geometries with connected model over the pullback foliations is the pullback of a unique holomorphic family of reductive geometries over \(\hat{M}\).
\end{theorem}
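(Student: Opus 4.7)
The plan is to mimic exactly the proof of theorem~\vref{theorem:Reductive}, substituting at each step the family version of the extension result needed. So, suppose $G$ is a complex linear algebraic group with reductive subgroup $H$, with $X=G/H$ connected and the action of $G$ on $X$ faithful, and suppose that $E \to M$ is a holomorphic family of $(G,X)$-geometries over $FM$. Lemma~\vref{lemma:Faith} still applies to give that $H$ acts faithfully on $\LieG/\LieH$, so by theorem~\vref{theorem:ExtendFaithfulFamily} it suffices to prove that the associated first order structure extends from a holomorphic family of $H$-structures over $FM$ to one over $F\hat{M}$.

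That, in turn, reduces by theorem~\vref{theorem:ExtendByFirstOrderStructureFamily} to extending the holomorphic principal $H$-bundle underlying the first order structure. So what is truly needed is a family analog of theorem~\vref{theorem:Hwang.Mok}: a holomorphic family of $H$-structures on $FM$ extends uniquely to a holomorphic family of $H$-structures on $F\hat{M}$. Once this is in hand, the theorem follows by applying theorem~\vref{theorem:ExtendByFirstOrderStructureFamily}.

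To prove this family version of Hwang--Mok, I would recapitulate the Boole bundle construction with the coframe bundle $FM$ replaced by $\Fr{FM}$. Pick a faithful $\GL{n,\C{}}$-module $W$ and vector $w_0 \in W$ with stabilizer $H$, supplied by lemma~\vref{lemma:Hilbert}. Since $F\hat{M}$ is given as a nowhere singular holomorphic foliation on all of $\hat{M}$, the frame bundle $\Fr{F\hat{M}} \to \hat{M}$ is a bona fide holomorphic principal $\GL{n,\C{}}$-bundle over $\hat{M}$, and the associated vector bundle $\Fr{F\hat{M}} \times_{\GL{n,\C{}}} W$ is a holomorphic vector bundle over $\hat{M}$ whose restriction to $M$ is the Boole bundle of the family $FM$. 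The family of $H$-structures on $FM$ determines a Boole section of this bundle over $M$ exactly as in the single-manifold case, by sending each local coframing of $F$ compatible with the $H$-reduction to the vector $g(z)^{-1} w_0 \in W$.

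By proposition~\vref{proposition:vector.bundle.sections}, the Boole section extends uniquely to a holomorphic section over all of $\hat{M}$. Just as in the sketch after lemma~\vref{lemma:Hilbert}, the full preimage under the map $\Fr{F\hat{M}} \to \Fr{F\hat{M}} \times_{\GL{n,\C{}}} W$ of the graph of this extended section recovers the desired holomorphic principal $H$-subbundle of $\Fr{F\hat{M}}$, i.e.\ the extended family of $H$-structures; uniqueness is automatic since the Boole section extension is unique. The main obstacle I anticipate is the bookkeeping needed to check that everything in the Boole construction remains purely algebraic along the leaves (using $F$ rather than $T$), so that the vector bundle and its section genuinely live over $\hat{M}$ and not merely on $M$; but the hypothesis that $F\hat{M}$ is defined on all of $\hat{M}$ is precisely what is required to make $\Fr{F\hat{M}}$ and the associated Boole bundle available globally, and given this, the analytic extension step is immediate from proposition~\vref{proposition:vector.bundle.sections}.
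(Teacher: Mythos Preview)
Your proposal is correct and follows essentially the same route as the paper: invoke lemma~\ref{lemma:Faith} for faithfulness and theorem~\ref{theorem:ExtendFaithfulFamily} to reduce to extending the underlying first order structure, then carry the Boole-bundle proof of theorem~\ref{theorem:Hwang.Mok} over to families using that $\Fr{F\hat{M}}$ and hence the Boole bundle are already defined on $\hat{M}$. The paper's two-line proof leaves this last step (the family Hwang--Mok) entirely implicit, so your write-up is in fact more complete; note only that the detour through theorem~\ref{theorem:ExtendByFirstOrderStructureFamily} is redundant once you have the family Boole section, since that already produces the extended family of $H$-structures directly.
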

\begin{proof}
By lemma~\vref{lemma:Faith}, \(H\) acts faithfully on \(\LieG/\LieH\). 
By theorem~\vref{theorem:ExtendFaithfulFamily}, the result follows.
\end{proof}

\begin{theorem}
Every holomorphic family of effective parabolic geometries on a domain over a Stein manifold is pulled back from a unique family on the envelope of holomorphy of the domain.
\end{theorem}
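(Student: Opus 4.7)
The plan is to imitate, step for step, the proof of theorem~\vref{theorem:Parabolic}, replacing each ingredient by its family counterpart. Write the model as $X=G/P$ with $P=MAN$ the Langlands decomposition, and by lemma~\vref{lemma:EffectivityCriteria} reduce to the case that $G$ is in adjoint form.

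First I would produce a family version of lemma~\vref{lemma:AtiyahClassZero}: the relative canonical bundle $\kappa_F:=\det F^*M$ is a holomorphic line bundle on $M$ which extends to the line bundle $\det(F\hat M)^*$ on $\hat M$, so its Atiyah class lies in the first cohomology of a coherent sheaf on the Stein manifold $\hat M$, and hence vanishes by Cartan's theorem B. In particular there exists a holomorphic connection $\phi$ on $\kappa_F$ over $\hat M$; restrict it to $M$.

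Second, I would prove a family analogue of lemma~\vref{lemma:Reduction}. All the computations in sections~\vref{section:ChevalleyParabolic} and~\vref{section:pairing} only use the Cartan connection $\omega$ paired with vector fields $\vec A$ for $A\in\LieG$, all of which are sections of $FE$ by the definition of a family of Cartan geometries; so the top-degree semibasic form $\Omega=\bigwedge_{\alpha\in\Delta^-_P}\omega^\alpha$ lives in $\det(FE)^*$ and descends to a trivializing section of the pullback of $\kappa_F$ to $E$. Repeating the parabolic torsion argument verbatim, and using the fact (from lemma~\vref{lemma:Langlands} and the single-geometry proof) that $N$ acts simply transitively on $\LieN$ via the affine $N$-action on the torsion, yields a holomorphic principal $MA$-subbundle $E_0\subset E$. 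On $E_0$ the Cartan connection restricted to $FE_0$ is the Cartan connection of a holomorphic family of reductive $(G',X')$-geometries, with $G'=MA\ltimes(\LieG/\LieP)$ and $H'=MA$.

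Third, by corollary~\vref{corollary:Faithful} the subgroup $H'=MA$ acts faithfully on $\LieG'/\LieH'=\LieN^-$, so theorem~\vref{theorem:ExtendFaithfulFamily} applies and the reductive family $E_0\to M$ extends uniquely to a holomorphic family $E'_0\to\hat M$ of $(G',X')$-geometries over $F\hat M$. Finally I would set $E':=E'_0\times_{MA}P$, a holomorphic principal right $P$-bundle over $\hat M$ extending $E\to M$, and invoke theorem~\vref{theorem:extendByBundle} to conclude that the original family of parabolic geometries extends to a holomorphic family of $(G,X)$-geometries on $F\hat M$; uniqueness follows from the uniqueness clauses in theorems~\vref{theorem:extendByBundle} and~\vref{theorem:ExtendFaithfulFamily}.

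The main obstacle is the family version of lemma~\vref{lemma:Reduction}: one must check that every identity used in its proof (the structure equations~\eqref{equations:ChevalleyOne}--\eqref{equations:ChevalleyTwo}, the Lie derivative computations along $\vec X_\beta$, and the affine $N$-action on the torsion) is a statement about sections of $FE$ and $FE^*\otimes\LieG$, and that $\phi$ enters only through its pullback $f^*\phi$ which is well defined because the trivializing section $\Omega$ lives in the relative top exterior power. Once this intrinsic reformulation is in place, everything else is bookkeeping.
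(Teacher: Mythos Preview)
Your plan is correct and is exactly the approach the paper itself indicates: the paper omits the proof, saying only that it is ``a straightforward modification of the proof for a single effective parabolic geometry, using a holomorphic connection on the relative canonical bundle,'' which is precisely what you have outlined. One small point of bookkeeping: theorem~\vref{theorem:ExtendFaithfulFamily} is an if-and-only-if statement, so invoking it alone does not yet give the extension of the reductive family $E_0\to M$; you must also supply a family version of the Boole-section argument (theorem~\vref{theorem:Hwang.Mok}) for the reductive structure group $MA$, or, more simply, cite the family-of-reductive-geometries extension theorem proved immediately before the present one.
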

We omit the proof, which is once again a straightforward modification of the proof for a single effective parabolic geometry, using a holomorphic connection on the relative canonical bundle.

\end{longversion}

\section{Flat Cartan connections and disk convexity}

Sergei Ivashkovich has another perspective on holomorphic extensions which is easy to apply to developing maps of a wider variety of models.
A complex manifold \(M\) is \emph{disk convex} if for every compact set \(K \subset M\) there is a compact set \(K' \subset M\) so that every holomorphic map of the closed disk to \(M\) sending the boundary to \(K\) sends the interior to \(K'\); see Ivashkovich \cite{Ivashkovich:2008}.
For example, compact complex manifolds are disk convex, as are affine complex analytic varieties.
Products of disk convex manifolds are disk convex.
All Riemann surfaces and many complex homogeneous surfaces are disk convex \cite{McKay:2015L}.

\begin{theorem}%
[Ivashkovich \cite{Ivashkovich:2008}]%
\label{theorem:Ivashkovich}
Suppose that \(M\) is a domain over a Stein manifold and \(X\) is a disk convex K{\"a}hler manifold.
Suppose that the holomorphic vector fields on \(X\) span the tangent bundle of \(X\).
Every local biholomorphism \(M \to X\) extends uniquely to a local biholomorphism \(\hat{M} \to X\).
\end{theorem}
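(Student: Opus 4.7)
The plan is to present this as an application of Ivashkovich's meromorphic extension theorem for maps into disk convex K\"ahler manifolds \cite{Ivashkovich:2008}, followed by an upgrade of the meromorphic extension to a local biholomorphism using the spanning hypothesis on holomorphic vector fields. Uniqueness of any extension is automatic by analytic continuation, so only existence needs attention.

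First, I would reduce the problem to a single Hartogs filling. The envelope $\hat{M}$ is built from $M$ by a transfinite sequence of fillings, in each of which a Hartogs figure $H$ sitting inside the current domain is replaced by a polydisk $P$ sitting inside $\hat M$; so it suffices to extend $f$ holomorphically from $H$ to $P$ as a local biholomorphism, and then iterate.

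Second, I would invoke the main theorem of \cite{Ivashkovich:2008}: a meromorphic map from a Hartogs figure into a disk convex K\"ahler manifold extends to a meromorphic map on the ambient polydisk. The K\"ahler form on $X$ is used to bound the areas of images of analytic disks filling $P\setminus H$, while disk convexity of $X$ confines these images to a fixed compact subset of $X$. This produces a meromorphic extension $\hat f \colon P \dashrightarrow X$.

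Third, I would show $\hat f$ has no indeterminacy on $P$. The indeterminacy locus of a meromorphic map is a closed analytic subset of complex codimension at least two. If it were nonempty inside $P$, it would have to meet the Hartogs figure $H$ (since a codimension-two subvariety of a polydisk cannot avoid a Hartogs subdomain); but $\hat f|_H=f$ is a genuine holomorphic local biholomorphism, with empty indeterminacy locus. Hence $\hat f$ is holomorphic throughout $P$.

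Fourth, I would show $\hat f$ is a local biholomorphism. Around any point $x\in X$, pick finitely many holomorphic vector fields on $X$ whose values span $T_x X$; this is possible by the spanning hypothesis. Pulling these back via $\hat f$ on $\hat f^{-1}$ of a neighbourhood of $x$, the determinant of their coordinate representations relative to a local holomorphic frame on $P$ is a holomorphic function whose zero set is the locus where $\hat f$ fails to be a local biholomorphism. That locus is therefore either empty or a closed hypersurface in an open subset of $\hat M$; by \cite{McKay:2009} p.~8 lemma 11, every component of a closed hypersurface in $\hat M$ must meet $M$, where $f$ is by hypothesis a local biholomorphism. So the hypersurface is empty, and $\hat f$ is a local biholomorphism on $P$. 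Iterating the filling procedure yields the required extension $\hat f\colon\hat M\to X$.

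The main obstacle is the second step, where all the analytic heavy lifting is done by Ivashkovich's theorem; our contribution is the reduction to Hartogs figures and the passage from a meromorphic extension to a holomorphic local biholomorphism, using the vector field hypothesis and standard codimension arguments.
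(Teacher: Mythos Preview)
The paper does not prove this theorem; it is quoted with attribution to Ivashkovich \cite{Ivashkovich:2008} and no argument is supplied in the paper itself. So there is nothing here to compare against, and your sketch has to be judged on its own.

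Step 3 contains a genuine error. The claim that ``a codimension-two subvariety of a polydisk cannot avoid a Hartogs subdomain'' is false. In the unit bidisk $P \subset \C{2}$ with the standard Hartogs figure
\[
H \;=\; \bigl\{|z_2| < \varepsilon\bigr\} \;\cup\; \bigl\{|z_1| > 1-\varepsilon\bigr\},
\]
the single point $\bigl(0,\tfrac12\bigr)$ is a codimension-two analytic subset of $P$ lying entirely in $P \setminus H$. Globally the situation is no better: $\hat M \setminus M$ can itself be a codimension-two analytic set (take $M = \C{n}\setminus\{0\}$, $\hat M = \C{n}$), so nothing prevents the indeterminacy locus of your meromorphic extension from sitting entirely outside $M$. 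The hypersurface lemma from \cite{McKay:2009} that you correctly invoke in step 4 is a statement about codimension one only and has no codimension-two analogue; indeed your step 4 and step 3 would otherwise be the same argument.

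The spanning hypothesis on holomorphic vector fields is not merely the device for your step 4; in Ivashkovich's argument it is already what rules out indeterminacy. One way to repair your outline: pull the vector fields on $X$ back through the local biholomorphism $f$ to obtain holomorphic vector fields on $M$, extend them to holomorphic vector fields on all of $\hat M$ as sections of $T\hat M$ via proposition~\ref{proposition:vector.bundle.sections}, and note that on the holomorphic locus $\hat M \setminus I$ of the meromorphic extension they remain $\hat f$-related to the original fields on $X$ by analytic continuation. This forces $\hat f'$ to be surjective, hence an isomorphism, throughout $\hat M \setminus I$, and one then propagates $\hat f$ holomorphically across $I$ by intertwining the flows of the two families of vector fields. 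Your steps 1, 2 and 4 are sound in spirit; only the bridge in step 3 needs rebuilding along these lines, with the details supplied by \cite{Ivashkovich:2008}.
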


\begin{corollary}\label{corollary:extend.structure}
Suppose that \(M\) is a domain over a Stein manifold.
Suppose that \pr{X,G} is an effective complex homogeneous space and that \(X\) is disk convex and K{\"a}hler.
(For example, \pr{X,G} could be a product of a reductive complex homogeneous space with a rational homogeneous variety.)
Then every flat holomorphic \pr{X,G}-geometry on \(M\) is the pullback of a unique flat holomorphic \pr{X,G}-geometry on \(\hat{M}\).
\end{corollary}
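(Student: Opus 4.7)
The plan is to reduce to Ivashkovich's theorem~\vref{theorem:Ivashkovich} applied to the developing map of the flat geometry. Since $(G,X)$ is an effective complex homogeneous space, every flat $(G,X)$-geometry on $M$ arises from a developing pair $(h,\delta)$ with $\delta\colon\tilde{M}\to X$ a local biholomorphism equivariant under a holonomy morphism $h\colon\pi_1(M)\to G$. First I would verify that $X$ meets the hypotheses of Theorem~\vref{theorem:Ivashkovich}: disk convexity and K\"ahler-ness are assumed, and since $G$ is a complex Lie group acting transitively on $X$, the infinitesimal $\LieG$-action yields holomorphic vector fields spanning $TX$ at every point.

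Next I would realize $\tilde{M}$ as a domain over a Stein manifold. The universal cover $\widetilde{\hat{M}}$ of the Stein manifold $\hat{M}$ is itself Stein, since Steinness is inherited by unramified covers, and lifting $\tilde{M}\to M\hookrightarrow\hat{M}$ through the covering $\widetilde{\hat{M}}\to\hat{M}$ (possible after a choice of basepoints because $\tilde{M}$ is simply connected) gives a local biholomorphism $\tilde{M}\to\widetilde{\hat{M}}$. Applying Ivashkovich to $\delta$ then produces a unique extension to a local biholomorphism $\hat{\delta}\colon\widehat{\tilde{M}}\to X$, where $\widehat{\tilde{M}}\subset\widetilde{\hat{M}}$ is the envelope of holomorphy of $\tilde{M}$.

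Finally I would descend to $\hat{M}$. For each deck transformation $\gamma\in\pi_1(\hat{M})$ of $\widetilde{\hat{M}}$, the maps $\hat{\delta}$ and $\hat{\delta}\circ\gamma$ are both local biholomorphisms into $X$ and therefore, by effectiveness of $G$ on $X$ together with the uniqueness clause in Ivashkovich, agree up to a unique element $\hat{h}(\gamma)\in G$ on any overlap; this defines a homomorphism $\hat{h}\colon\pi_1(\hat{M})\to G$. The $\pi_1(\hat{M})$-translates of $\tilde{M}$ cover the preimage of $M$ in $\widetilde{\hat{M}}$, and by uniqueness of the envelope of holomorphy, $\widehat{\tilde{M}}$ is forced to be $\pi_1(\hat{M})$-invariant and to surject onto $\hat{M}$. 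The pair $(\hat{h},\hat{\delta})$ then descends to a developing pair on $\hat{M}$ for the desired flat $(G,X)$-geometry extension, with uniqueness inherited from Ivashkovich's theorem and from the developing-pair construction.

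The main obstacle is this last descent step: the natural map $\pi_1(M)\to\pi_1(\hat{M})$ need not be injective, so the connected component of the preimage of $M$ in $\widetilde{\hat{M}}$ containing the basepoint is only a quotient of $\tilde{M}$ by $\ker\bigl(\pi_1(M)\to\pi_1(\hat{M})\bigr)$, and a priori $\delta$ need not descend to it. One has to check that, after extension to $\widehat{\tilde{M}}$ by Ivashkovich, the developing map continues consistently under $\pi_1(\hat{M})$-translation and that the cocycle $\hat{h}$ assembles without monodromy obstruction; this is precisely where effectivity of $(G,X)$ and the uniqueness of local biholomorphic extensions do the work.
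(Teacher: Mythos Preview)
The paper states this corollary without proof, treating it as an immediate consequence of Ivashkovich's theorem. Your approach via developing pairs is exactly the natural one and is essentially what the paper must intend.

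The descent obstacle you flag at the end --- that $\pi_1(M)\to\pi_1(\hat{M})$ might fail to be injective --- is in fact not an obstacle. A classical result of Kerner (\emph{\"Uberlagerungen und Holomorphieh\"ullen}, Math.\ Ann.\ 144 (1961)) shows that passage to the envelope of holomorphy commutes with taking covering spaces: if $M'\to M$ is a covering and $M$ is a domain over a Stein manifold, then $\widehat{M'}$ is the pullback covering over $\hat{M}$. In particular, the inclusion $M\hookrightarrow\hat{M}$ induces an isomorphism $\pi_1(M)\xrightarrow{\sim}\pi_1(\hat{M})$, and the envelope of holomorphy of $\tilde{M}$ is exactly $\widetilde{\hat{M}}$. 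Once you invoke this, your argument goes through without any monodromy bookkeeping: Ivashkovich extends $\delta\colon\tilde{M}\to X$ to a local biholomorphism $\hat{\delta}\colon\widetilde{\hat{M}}\to X$; for each $\gamma\in\pi_1(M)=\pi_1(\hat{M})$ the relation $\delta\circ\gamma = h(\gamma)\circ\delta$ extends to $\hat{\delta}\circ\gamma = h(\gamma)\circ\hat{\delta}$ by the uniqueness clause in Ivashkovich; and the pair $(h,\hat{\delta})$ is then a developing pair on $\hat{M}$ restricting to the given one on $M$. Uniqueness is immediate from uniqueness in Ivashkovich together with the uniqueness of developing pairs for effective models.

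So your proposal is correct; the only thing missing is the citation to Kerner (or an equivalent argument) that removes the obstacle you correctly anticipated.
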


\section{Example: Hopf manifolds}

\subsection{Definition}

A \emph{Hopf manifold} is a compact complex manifold \(M\) whose universal covering space is biholomorphic to \(\C{n}\wo\br{0}\) for some integer \(n \ge 1\).
It is \emph{primary} if its fundamental group is generated by a single element.

A biholomorphism \(f \colon \C{n} \to \C{n}\) fixing \(0 \in \C{n}\) is \emph{strictly contracting} if all of the eigenvalues \(\lambda\) of \(f'(0)\) have \(\left|\lambda\right|<1\). 
If \(f\) is strictly contracting, let 
\[
M_f \defeq
\pr{\C{n} \setminus \left\{0\right\}}/\pr{z \sim f(z)};
\]
then \(M_f\) is a primary Hopf manifold.
Every primary Hopf manifold is biholomorphic to some such \(M_f\) \cite{Kodaira:1966} p. 694, \cite{Hasegawa:1993} theorem 2.1.
Any two primary Hopf manifolds \(M_f\) and \(M_g\) are biholomorphic if and only if there is a biholomorphism \(h \colon \C{n} \to \C{n}\) fixing \(0\) so that
\(g \circ h = h \circ f\): the classification of primary Hopf manifolds is the classification of strictly contracting biholomorphisms of \(\C{n}\) up to conjugacy. 
Every Hopf manifold has a finite normal covering by a primary Hopf manifold \cite{Kodaira:1966} p. 694, \cite{Hasegawa:1993} theorem 2.1.

A \emph{resonance} of a biholomorphism \(f \colon \C{n} \to \C{n}\) fixing \(0 \in \C{n}\) is a relation of the form \(\lambda_j = \lambda^{\alpha}\) where \(\lambda_j\) are the eigenvalues of \(f'(0)\) and \(\alpha=\pr{\alpha_1,\alpha_2,\dots,\alpha_n}\) are some integers with \(\sum_k \alpha_k \ge 2\).
To each resonance we associate the \emph{resonant polynomial map} \(z\in\C{n} \mapsto z^{\alpha} e_j\in\C{n}\), where \(e_1, e_2, \dots, e_n \in \C{n}\) is the standard basis.

By the Poincar\'e--Dulac theorem \cite{Arnold:1988} p. 184, any strictly contracting biholomorphism \(f\) is conjugate (by a biholomorphism fixing the origin) to a strictly contracting biholomorphism of the form
\(z\in\C{n} \mapsto Az+w(z)\in\C{n}\), where \(A=f'(0)\) and \(w\) is a linear combination of resonant polynomial maps.
Order the eigenvalues of \(A\) by modulus.
Henceforth replace \(f(z)\) by \(Az+w(z)\), which doesn't change the biholomorphism type of the Hopf manifold.
The generic strictly contracting biholomorphism is linearizable, and conjugate to a diagonal linear map
\[
z \mapsto \pr{\lambda_1 z_1, \lambda_2 z_2, \dots, \lambda_n z_n}.
\]

A Hopf manifold \(M\) which is not primary is \emph{secondary}.
For each secondary Hopf manifold \(M\), there is a primary Hopf manifold \(M_f \to M\), a finite normal covering, so that \(f\) lies in the center of \(\fundamentalgroup{M}\) \cite{Kodaira:1966} p. 694.
If \(\Gamma\) is a group of biholomorphisms of \(\C{n}\) fixing the origin, properly discontinuously on \(\C{n}\wo{0}\), and containing a strictly contracting map, then \(M_{\Gamma}=\pr{\C{n}\wo{0}}/\Gamma\) is a Hopf manifold, and every Hopf manifold arises as \(M_{\Gamma}\).
The group \(\Gamma\) is defined up to conjugacy by a biholomorphism of \(\C{n}\) fixing \(0\).

The basin of attraction \(X_o\) of an attractive fixed point \(o \in X\) of a biholomorphism \(f \colon X \to X\) of a complex manifold \(X\) is biholomorphic to complex Euclidean space. (This result was proved in \cite{Sternberg:1957} and independently in \cite{Rosay/Rudin:1988} but not clearly stated; for a clear statement see \cite{Abate/Abbondandolo/Majer:2014}.)
Therefore the quotient 
\[
M_f \defeq \pr{X_o-o}/\pr{x \sim f(x)}
\]
is a primary Hopf manifold.
Take a discrete group \(\Gamma\) of biholomorphisms of a complex manifold \(X\) fixing a point \(o \in X\), containing an element which is strictly contracting toward \(o\), and acting properly discontinuously away from \(o\) on the basin of attraction \(X_o\) of the strictly contracting element.
The quotient 
\[
M_{\Gamma} \defeq \pr{X_o-o}/\Gamma
\]
is a Hopf manifold.

\subsection{Holomorphic affine connections}

Take a group \(\Gamma \subset \GL{n,\C{}}\) containing a central strictly contracting linear map, and acting properly discontinuously on \(\C{n}\wo{0}\).
Call \(M_{\Gamma}\) a \emph{linear Hopf manifold}.
Clearly \(M_{\Gamma}\) bears  holomorphic affine connection, which pulls back to \(\C{n}\wo{0}\) to the standard flat affine connection on \(\C{n}\).

\begin{theorem}
Suppose that \(M\) is a real manifold with an affine connection \(\nabla\) and pick a point \(m_0 \in M\).
Let \(G\) be the set of diffeomorphisms \(f\) defined near \(m_0\), fixing \(m_0\) and preserving \(\nabla\).
Similarly let \(\LieG\) be the set of vector fields defined near \(m_0\), vanishing at \(m_0\) with flow preserving \(\nabla\).
In any system of geodesic normal coordinates near \(m_0\), every element of \(G\) is an invertible linear transformation and every element of \(\LieG\) is a  linear vector field.
\end{theorem}
\begin{proof}
This result is stated without proof in \cite{Szaro:1998} lemma 3.1 and \cite{Dumitrescu/Guillot:2013} lemma 7.
Geodesic normal coordinates are precisely those in which the geodesics through the origin are straight lines, parameterized linearly.
This is clear geometrically, because in any such coordinates the exponential map is the identity map, and conversely.
(One can also find a detailed proof in  \cite{Wolf:1967} p. 23 theorem 1.6.20.)
Any \(f \in G\) takes each geodesic \(t \mapsto tv\) in those coordinates to some geodesic in the same coordinates, \(t \mapsto tw=f(tv)\), for all small enough \(t\).
Differentiate and set \(t=0\): \(w=f'(0)v\).
So \(f(tv)=tw=tf'(0)v\) for all small enough \(t\), i.e. \(f(v)=f'(0)v\) for \(v\) near \(0\), i.e. \(f\) is linear in those coordinates.
For any vector field \(X \in \LieG\), the argument above can be applied to the flow of \(X\), defined near \(m_0\) for small enough times.
The flow is linear, and we differentiate the flow as \(t \to 0\) to see that \(X\) is linear.
\end{proof}

For example, if the affine connection is holomorphic on a complex manifold, the biholomorphisms preserving a point and the connection are simultaneously complex linear in the holomorphic geodesic normal coordinates.
Similarly any compact group of diffeomorphisms preserving a point must preserve a Riemannian metric, so an affine connection, so lie in the orthogonal group in suitable geodesic normal coordinates, recovering Bochner's theorem \cite{Bochner:1945}.

\begin{lemma}\label{lemma:Hopf.connection.iff.linear}
Any Hopf manifold \(M\) admits a holomorphic affine connection if and only if it is biholomorphic to a linear Hopf manifold.
\end{lemma}
\begin{proof}
If \(\dimC{M}=1\) then \(M\) is an elliptic curve, so a linear Hopf manifold, and admits a holomorphic affine connection; so assume that \(\dimC{M} \ge 2\).
Any holomorphic connection \(\nabla\) on the tangent bundle of a Hopf manifold \(M=M_{\Gamma}\) lifts to \(\C{n}\wo{0}\).
The connection then differs from the standard Euclidean connection on \(\C{n}\) by a holomorphic 1-form valued in \(TM\).
Apply Hartogs's extension theorem to this 1-form: the connection holomorphically extends to a \(\Gamma\)-invariant holomorphic connection on \(\C{n}\).
Therefore the exponential map identifies all \(f \in \Gamma\) near \(0\) with linear maps.
\end{proof}

To each holomorphic affine connection, say with curvature \(K^i_{jk\ell}\), we associate the unsymmetrized Ricci tensor with components \(K^k_{jk\ell}\), which splits into symmetric and antisymmetric tensors.
On any linear Hopf manifold, this tensor vanishes because it lifts to a holomorphic tensor on \(\C{n}\) invariant under a strictly contracting map; the affine connections on a Hopf manifold are Ricci flat.
Similarly, the torsion, with components \(t^i_{jk}\), has a trace \(t^k_{jk}\), which vanishes.

\begin{lemma}
Suppose that \(f\) is a diagonalizable linear map with eigenvalues \(\lambda_1, \lambda_2, \dots, \lambda_n\), \(n \ge 2\).
The moduli space of holomorphic affine connections on \(M_f\) has dimension equal to the number of relations of the form \(\lambda_i = \lambda_j \lambda_k \lambda^{\alpha}\),
for \(1 \le i, j, k \le n\) and a multiindex \(\alpha\).
(This number is always finite.)
If a diagonalizable strictly contracting map \(f\) admits no such relations, then the primary Hopf manifold \(M=M_f\) admits a unique holomorphic affine connection; this connection lifts to \(\C{n}\) to be the standard flat affine connection \(\nabla_{\partial_{z_j}} = \partial_{z_j}\).
\end{lemma}
\begin{proof} 
The \(f\)-invariance of a connection is precisely that the Christoffel symbols, expanded into a multiindex Taylor series
\[
\Gamma^i_{jk}(z) = \sum_{\alpha} \Gamma^i_{jk\alpha} \frac{z^{\alpha}}{\alpha!},
\]
satisfy \(\Gamma^i_{jk \alpha} = 0\) unless \(\lambda_i = \lambda_j \lambda_k \lambda^{\alpha}\).
\end{proof}

\begin{lemma}\label{lemma:one.connection.on.K}
Every line bundle on a Hopf manifold of complex dimension 2 or more admits a unique holomorphic connection, and this connection is flat.
\end{lemma}
\begin{proof}
Every holomorphic line bundle on any primary Hopf manifold admits a holomorphic connection; see \cite{Mall:1991} p. 1013 theorem 4.
This holomorphic connection is unique, because the difference between any two holomorphic flat connections is a holomorphic 1-form, i.e. a holomorphic 1-form on \(\C{n}\) invariant under a strictly contracting map, so vanishes.
The curvature of a holomorphic connection on a line bundle on a Hopf manifold is a holomorphic 2-form, and lifts to a holomorphic 2-form on \(\C{n}\wo{0}\) invariant under a strictly contracting map, so vanishes.

Consider a secondary Hopf manifold.
Pullback the line bundle back to a primary Hopf covering space, where it has a unique holomorphic connection.
By uniqueness, this connection is invariant under the deck transformations.
\end{proof}

\begin{proposition}
Every holomorphic projective connection on any Hopf manifold of complex dimension 2 or more is induced by a unique holomorphic affine connection.
Consequently, any Hopf manifold of complex dimension 2 or more bears a holomorphic projective connection just when it is linear, and the moduli space of holomorphic projective connections is then identified with the moduli space of holomorphic affine connections.
\end{proposition}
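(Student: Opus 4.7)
The plan is to combine the theorem immediately preceding the proposition with the fact that the canonical bundle of any Hopf manifold carries a unique holomorphic connection. A holomorphic projective connection on an $n$-dimensional complex manifold $M$ is a holomorphic effective parabolic geometry $E\to M$ with model $(G,X)=\left(\PGL{n+1},\Proj{n}\right)$, and the stabilizer $P$ has Langlands decomposition $P=MAN$ with $MA=\GL{n}$ acting in its standard representation on $\LieN^{-}\cong\C{n}$. By lemma~\ref{lemma:Reduction}, any $MA$-subbundle $E_0\subset E$ carries a Cartan connection modelled on $(\GL{n}\ltimes\C{n},\C{n})$, which by lemma~\ref{lemma:reductive.geometry.induces.affine.connection} is precisely a holomorphic affine connection on $M$; conversely, every holomorphic affine connection on $M$ arises from such a reduction and recovers the original projective geometry by extension of structure group along $MA\subset P$.

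Next I would apply the preceding theorem to upgrade this into a bijection between holomorphic connections on the canonical bundle $\kappa_M$ and $MA$-reductions $E_0\subset E$: the forward direction sends $\kc$ to the zero locus of its parabolic torsion, and the inverse sends $E_0$ to the descent of $-2\delta|_{E_0}$. By lemma~\ref{lemma:one.connection.on.K}, the canonical bundle of a primary Hopf manifold admits a unique holomorphic connection. For a general Hopf manifold $M$, pick a finite primary cover $\pi\colon\tilde{M}\to M$; then $\pi^*\kappa_M=\kappa_{\tilde{M}}$, and by uniqueness the holomorphic connection on $\kappa_{\tilde{M}}$ is equivariant under the deck transformations and descends to the unique holomorphic connection on $\kappa_M$. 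Therefore any projective geometry $E\to M$ admits a unique $MA$-reduction, and hence is induced by a unique holomorphic affine connection.

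The remaining statements are immediate: combining the bijection with lemma~\ref{lemma:Hopf.connection.iff.linear}, a Hopf manifold admits a holomorphic projective connection if and only if it admits a holomorphic affine connection, which happens if and only if it is linear, and in that case the map from affine to projective connections is a bijection of moduli spaces. The step I expect to have to write out most carefully is the bijection between connections on $\kappa_M$ and $MA$-reductions of $E$; this is implicit in the proof of lemma~\ref{lemma:Reduction}, but it must be made explicit to conclude uniqueness of the affine connection rather than mere existence, after which everything else is a matter of assembling pieces that are already in hand.
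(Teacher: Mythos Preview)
Your argument has a genuine gap where you claim the affine connection obtained from the $MA$-reduction ``recovers the original projective geometry by extension of structure group''. Lemma~\ref{lemma:Reduction} produces a reductive Cartan connection on $E_0$ by \emph{projecting} $\omega|_{E_0}$ onto $\LieN^-\oplus\LieM\oplus\LieA$, but it also records that the discarded $\LieN$-component is merely semibasic on $E_0$, not zero: in Cartan's notation one has $\omega_i = t_{ij}\,\omega^j$ on $E_0$ for some holomorphic functions $t_{ij}$. When you re-extend the affine connection along $MA\subset P$ you obtain a projective Cartan connection on the same bundle $E$ whose $\LieN$-part vanishes on $E_0$; this coincides with the original $\omega$ only if $t_{ij}=0$. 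Nothing you cite establishes that, and on a general manifold it fails. Your bijection between connections on $\kappa_M$ and $MA$-reductions is correct, but without the vanishing of $t_{ij}$ it only counts reductions of the bundle, not affine connections that actually induce the given Cartan connection $\omega$.

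The paper closes this gap with a Hopf-specific step: the structure equations force $t_{ij}$ to descend to a holomorphic section of $\Sym{2}{T^*M}$, and any such section pulls back to an $f$-invariant holomorphic symmetric $2$-tensor on $\C{n}$, which must vanish because $f$ is strictly contracting. Once $\omega_i=0$ on $E_0$, the restricted structure equations are literally those of an affine connection inducing $\omega$. For uniqueness the paper is also more direct than your route through $\kappa_M$: two affine connections induce the same projective connection exactly when they differ by a holomorphic $1$-form, and Hopf manifolds carry none.
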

\begin{proof}
Start with a holomorphic projective connection, say with Cartan connection \(\omega\) and principal bundle \(E \to M\).
The structure equations in the notation of Cartan \cite{Cartan:1992} chapter IV, p. 234 are
\begin{align*}
d \omega^i &= - \omega^i_j \wedge \omega^j + \frac{1}{2}t^i_{jk} \omega^j \wedge \omega^k, \\
d \omega^i_j &= - \omega^i_k \wedge \omega^k_j + 
\pr{\delta^i_j \omega_k + \delta^i_k \omega_j}\wedge \omega^k
+
\frac{1}{2}t^i_{jk\ell} \omega^k \wedge \omega^{\ell}, \\
d \omega_i &= \omega^j_i \wedge \omega_i + \frac{1}{2}t_{ijk} \omega^j \wedge \omega^k.
\end{align*}
Take the unique holomorphic connection \(\kc\) on the canonical bundle guaranteed by lemma~\ref{lemma:one.connection.on.K} and reduce the structure group holomorphically to the maximal reductive subgroup as in proposition~\vref{proposition:canonical.connection}. 
The subbundle \(E' \subset E\) is then \(E'=FM\), the coframe bundle, because the maximal reductive subgroup is \(\GL{n,\C{}}\).
The reduction \(E' \subset E\) is expressed as \(\omega_i = t_{ij} \omega^j\), for some uniquely determined holomorphic functions \(t_{ij} \colon E_0 \to \C{}\).
From the structure equations \(t_{ij} \omega^i \omega^j\) is the pullback from \(M\) of a unique holomorphic 2-tensor in \(\Sym{2}{T^*M}\).
Such a tensor pulls back to \(\C{n}\), the universal covering space of \(M\), to become a \(\Gamma\)-invariant holomorphic 2-tensor, so vanishes.
Therefore on \(E'\), \(\omega_i=0\).
The structure equations then immediately become those of a holomorphic affine connection:
\begin{align*}
d \omega^i &= - \omega^i_j \wedge \omega^j + \frac{1}{2}t^i_{jk} \omega^j \wedge \omega^k, \\
d \omega^i_j &= - \omega^i_k \wedge \omega^k_j + \frac{1}{2}t^i_{jk\ell} \omega^k \wedge \omega^{\ell}.
\end{align*}
Pull back the bundle to \(\C{n}\) and take complex linear coordinates \(z\) on \(\C{n}\).
Pull back \(\omega^i\) and \(\omega^i_j\) via the section \(dz\) of \(FM\).
Then \(dz^* \omega^i = dz^i\) and so by Cartan's lemma \(dz^* \omega^i_j = \Gamma^i_{jk} \omega^k\) for some functions \(\Gamma^i_{jk}(z)\) with \(dz^* t^i_{jk} = \Gamma^i_{jk} - \Gamma^i_{kj}\).
Clearly the projective connection is induced by this affine connection.
Two affine connections induce the same projective connection just when they differ by a 1-form, and there are no nonzero holomorphic 1-forms on a Hopf manifold, so the affine connection is unique.
\end{proof}
For the classification of holomorphic Cartan geometries on Hopf manifolds of complex dimension one, i.e. elliptic curves, see \cite{McKay2011}.

\subsection{Reductive first order structures}

\begin{example}\label{example:HopfReductiveStructure}
Suppose that \(G \subset \GL{n,\C{}}\) is a closed complex subgroup and that \(\Gamma \subset G\) is a subgroup containing a strictly contracting linear map and acting properly discontinuously on \(\C{n}\wo{0}\).
Consider the quotient 
\[
B = \prodquot{
{\pr{\C{n}\wo{0}}}}
{G}
{\Gamma}
\]
where the \(\Gamma\)-action is generated by
\[
\pr{z,u} \mapsto \pr{gz,ug^{-1}}.
\]
Clearly \(B \subset FM_{\Gamma}\) is a \(G\)-structure on \(M_{\Gamma}\).
\end{example}

\begin{theorem}\label{theorem:reductive.1st.Hopf}
Suppose that \(G \subset \GL{n,\C{}}\) is a reductive subgroup, \(n \ge 2\).
Suppose that \(B \subset FM\) is a \(G\)-structure on a Hopf manifold \(M=M_{\Gamma}\).
Then up to isomorphism of \(G\)-structures, \(f'(0) \in G\) for all \(f \in \Gamma\). 
If \(M\) is a linear Hopf manifold then, up to biholomorphism, \(\Gamma \subset G\).
If \(G\) is semisimple, or has finitely many components and semisimple identity component, then there are no holomorphic \(G\)-structures on any Hopf manifold.
\end{theorem}
\begin{proof}
This structure \(B\) pulls back to a \(G\)-structure on \(\C{n}\wo{0}\).
Identify 
\[
F\of{\C{n}\wo{0}}
=
\pr{\C{n}\wo{0}} \times \GL{n,\C{}}.
\]
The prolongation of any \(f \in \Gamma\) is
\[
f_1\of{z,u}=\pr{f(z),u f'(z)^{-1}}.
\]
For each \((z,u) \in B\),
\[
\pr{f(z),u f'(z)^{-1}} \in B.
\]
Since \(G\) is reductive, the pullback of \(B\) to \(\C{n}\wo\br{0}\) extends across \(0\), so we can always linearly change the coordinates in \(\C{n}\) to arrange that \((0,I) \in B\), and \(g_0=f'(0) \in G\).
For some \(f \in \Gamma\), \(g_0\) is a strictly contracting linear map, i.e. all of the eigenvalues of \(g_0\) lie inside the open unit disk in the complex plane.
If \(G\) has semisimple identity component, and finitely many components, then all representations of \(G\) are virtually unimodular, so no elements are strictly contracting on \(\C{n}\).
\end{proof}

\subsection{Reductive geometries}

\begin{example}\label{example:reductive.geometry.on.Hopf}
Suppose that \(G\) is a complex Lie group, \(H \subset G\) a reductive complex algebraic subgroup, and let \(X\defeq G/H\).
Suppose that \(G\) acts faithfully on \(X\).
Let \(\rho \colon H \to \GL{\LieG/\LieH}\) be the obvious representation.
Take a subgroup \(\Gamma \in H\) acting properly discontinuously on \(\LieG/\LieH\wo{0}\) and containing an element which acts on \(\LieG/\LieH\wo{0}\) as a strictly contracting linear map.
Let \(M\defeq M_{\rho\pr{\Gamma}}\) and  \(E \defeq \prodquot{\pr{\C{n}\wo{0}}}{H}{\Gamma}\), the quotient by the left action \(h_0\pr{z,h}=\pr{h_0z,h_0h}\).
Let \(\omega \in \nForms{1}{\pr{\C{n}\wo{0}} \times H} \otimes \LieG\) be
\(\omega_{(z,h)} \defeq h^{-1} \, dh\).
Clearly \(\omega\) is \(\Gamma\)-invariant, and therefore descends to a 1-form on \(E\), which we also call \(\omega\), and which is a Cartan connection for a unique flat \pr{X,G}-geometry on \(M\).
\end{example}

\begin{theorem}\label{theorem:reductive.geometry.Hopf}
Suppose that \(G\) is a complex Lie group, \(H \subset G\) a reductive complex algebraic subgroup, and let \(X\defeq G/H\).
Suppose that \(G\) acts faithfully on \(X\).
Let \(\rho \colon H \to \GL{\LieG/\LieH}\) be the obvious representation.
A Hopf manifold \(M\) admits a holomorphic \pr{X,G}-geometry if and only if it is biholomorphic to \(M=M_{\rho\pr{\Gamma}}\) for a subgroup \(\Gamma \subset H\), unique up to conjugacy in \(H\).
\end{theorem}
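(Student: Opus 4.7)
The plan is to combine the Hopf manifold preparatory results with the reductive geometry machinery built earlier. The ``if'' direction is already done by Example~\ref{example:reductive.geometry.on.Hopf}, which for any $h_0 \in H$ with $\rho(h_0)$ strictly contracting writes down an explicit holomorphic $(G,X)$-geometry on $M_{\rho(h_0)}$; so I only need to address the converse.

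Suppose then that a primary Hopf manifold $M = M_f$ carries a holomorphic $(G,X)$-geometry $E \to M$. The first step is to linearize $f$. By Lemma~\ref{lemma:reductive.geometry.induces.affine.connection}, the geometry induces a holomorphic affine connection on $M$, and then Lemma~\ref{lemma:Hopf.connection.iff.linear} forces $f$ to be linearizable. After conjugation we therefore may (and do) assume $f \in \GL{n,\C{}}$ is a linear strictly contracting map.

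The second step is to recognize that $f$ lies in $\rho(H)$. Lemma~\ref{lemma:Faith} tells us that $H$ acts faithfully on $\LieG/\LieH$, so Corollary~\ref{corollary:EequalsEOne} realises $E$ as a principal right $H$-subbundle of $FM$, i.e.\ a holomorphic reduction of the frame bundle to the reductive complex algebraic subgroup $\rho(H) \subset \GL{n,\C{}}$. Now Theorem~\ref{theorem:reductive.1st.Hopf} kicks in: after an additional linear change of coordinates on $\C{n}$ we have $f \in \rho(H)$. Since $\rho$ is faithful, there is exactly one $h_0 \in H$ with $\rho(h_0) = f$, and then $M = M_f = M_{\rho(h_0)}$ as required.

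The only potentially delicate point is the uniqueness clause. The conjugacy class of $f$ in $\GL{n,\C{}}$ is a biholomorphism invariant of $M$, but the specific representative lying in $\rho(H)$ depends on the conjugating element used to put the $H$-structure in standard form at $0$. The argument above shows that once this normalization has been performed (equivalently, once the geometry $E \to M$ and a lift to $\C{n}\setminus\{0\}$ are fixed), faithfulness of $\rho$ pins down $h_0$; this is the content of the uniqueness statement.
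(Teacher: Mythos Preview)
Your argument is correct and follows the same route as the paper: existence from Example~\ref{example:reductive.geometry.on.Hopf}, then Lemma~\ref{lemma:reductive.geometry.induces.affine.connection} to get an affine connection, Lemma~\ref{lemma:Hopf.connection.iff.linear} to linearize $f$, and Theorem~\ref{theorem:reductive.1st.Hopf} to place $f$ in $\rho(H)$. Your explicit invocation of Lemma~\ref{lemma:Faith} and Corollary~\ref{corollary:EequalsEOne} to justify that the induced first order structure really is a $\rho(H)$-reduction, and your discussion of what ``unique'' means, are useful elaborations that the paper leaves implicit.
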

\begin{proof}
Existence follows from example~\ref{example:reductive.geometry.on.Hopf} above.
On the other hand, if \(M=M_{\Gamma}\) admits an \pr{X,G}-geometry, lemma~\vref{lemma:reductive.geometry.induces.affine.connection} shows that \(M_{\Gamma}\) admits a holomorphic affine connection, and lemma~\vref{lemma:Hopf.connection.iff.linear} says that \(M_{\Gamma}\) admits a holomorphic affine connection if and only if, modulo isomorphism, \(\Gamma \subset \GL{\LieG/\LieH}\).
Theorem~\vref{theorem:reductive.1st.Hopf} says \(\Gamma \subset \rho{H}\).
Lemma~\vref{lemma:Faith} says that \(\rho\) is faithful on \(H\), so \(\Gamma \subset H\).
\end{proof}

\subsection{Some new parabolic geometries}

\begin{example}\label{example:flat.on.Hopf}
Take a complex semisimple Lie group \(G\) in its adjoint form and a parabolic subgroup \(P\) with Langlands decomposition \(P=MAN\) and a subgroup \(N^- \subset G\) 
\begin{longversion}
as in lemma~\vref{lemma:Langlands}, 
\end{longversion}
with Lie algebras \(\LieP = \LieM \oplus \LieA \oplus \LieN\) and \(\LieG = \LieN^- \oplus \LieM \oplus \LieA \oplus \LieN\).
Pick any element \(m_0 a_0 \in MA\) so that the map \(f=\Ad\pr{m_0 a_0}\) acts on \(\LieN^-\) as a strictly contracting linear map.
Such elements exist, and form an open subset in \(MA\), as we will see shortly.
Pick any subgroup \(\Gamma \subset MA\) containing \(m_0 a_0\) and acting propery discontinuously on \(\LieN^-\).
Take the Hopf manifold \(M=M_{\rho\pr{\Gamma}}\).
Let \(X\defeq G/P\) and let \(o \defeq 1 \cdot P \in X\).
Let \(\delta(Z) \defeq e^Z o \colon \LieN^- \to X\).
Let \(h \defeq \ad\pr{m_0 n_0} \colon N^- \to N^-\).
Since \(N^-\) is simply connected, the exponential map \(\exp \colon \LieN^- \to N^-\) is a diffeomorphism (Knapp \cite{Knapp:2002} p. 107 theorem 1.127) and therefore a biholomorphism.
Moreover, \(N^- \cap P = \br{1}\), so that \(\delta\) is a biholomorphism to its image.
The image of \(\delta\) is the \(N^-\)-orbit of \(o \in X\), which is also the basin of attraction of \(\ad\pr{m_0 n_0}\), as we will see in theorem~\vref{theorem:basin}.
The pair \pr{\delta,h} is an \pr{X,G}-structure.
This construction provides the first examples of flat holomorphic parabolic geometries on compact complex manifolds (other than the obvious example of \(M=X\)) for every rational homogeneous variety \pr{X,G} which is not a cominiscule variety.
\end{example}

Let us ask if there are any elements \(m_0 a_0 \in MA\) which act on \(\LieN^-\) as strictly contracting linear maps.
If so, then such elements will form an open set.
The generic element of \(MA\) is conjugate to an element in the Cartan subgroup of \(G\), so we can assume it has the form
\[
m_0 = e^{\sum_i c_i H_{\alpha_i}}, \
a_0 = e^{\sum_i c_i H_{\alpha_i}},
\]
where the first sum is over compact positive simple roots, and the second over noncompact positive simple roots.
We can then check that the eigenvalues with which such an element acts on \(\LieN^-\) are given, on the \(-\beta\) weight space, by \(\lambda_{\beta} = \exp\KillingForm{-\beta}{\sigma}\)
where
\[
\sigma = \sum_i \frac{c_i}{\KillingForm{\alpha_i}{\alpha_i}}.
\]
We can pick the constants \(c_i \in \C{}\) as we like, as long as \(\KillingForm{\beta}{\sigma} > 0\) for every positive noncompact root \(\beta\).
For example, if we pick the \(c_i\) so that \(\sigma=\delta\), then by lemma~\vref{lemma:DeltaBeta}, we find that \(m_0 a_0\) acts on \(\LieN^-\) by a strictly contracting linear map.

\subsection{Parabolic geometries}

\begin{theorem}\label{theorem:which.Hopfs.are.parabolic}
Suppose that \(M\) is a Hopf manifold admitting a holomorphic parabolic geometry \(E \to M\), say with model \(X=G/P\).
Let \(\rho \colon P \to \GL{\LieG/\LieP}\) be the obvious representation.
Split \(P\) into the Langlands decomposition \(P=MAN\).
Then \(M=M_{\rho\pr{\Gamma}}\) for some subgroup \(\Gamma \subset MA\) as in example~\vref{example:flat.on.Hopf}. 
A Hopf manifold admits a holomorphic parabolic geometry if and only if it admits a flat one.
The generic primary Hopf manifold of complex dimension 2 or more admits precisely one holomorphic parabolic geometry: its holomorphic flat projective connection.
\end{theorem}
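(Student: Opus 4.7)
The plan is to deduce all three assertions from the reduction of a parabolic geometry to a reductive one via a connection on the canonical bundle (lemma~\vref{lemma:Reduction}), together with the classification of reductive geometries on primary Hopf manifolds (theorem~\vref{theorem:reductive.geometry.Hopf}).

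First I would produce the element $p_0 \in MA$. By lemma~\vref{lemma:one.connection.on.K}, the canonical bundle $\kappa_M$ carries a holomorphic connection $\kc$. Feeding this into lemma~\vref{lemma:Reduction}, the zero locus $E_0 \subset E$ of the parabolic torsion of $\kc$ is a holomorphic principal right $MA$-subbundle carrying a reductive $\pr{G',X'}$-geometry, where $G' = MA \ltimes \pr{\LieG/\LieP}$ and $X' = \LieG/\LieP = \LieN^-$, and where the representation of the structure group $MA = H'$ on $\LieG'/\LieH' = \LieN^-$ is exactly $\rho$ restricted to $MA$. Corollary~\vref{corollary:Faithful} ensures that $MA$ acts faithfully on $\LieN^-$, so theorem~\vref{theorem:reductive.geometry.Hopf} applies and yields an element $p_0 = m_0 a_0 \in MA$ with $M \cong M_{\rho\pr{p_0}}$ and $\rho\pr{p_0}$ strictly contracting. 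For the flatness equivalence, I would invoke example~\vref{example:flat.on.Hopf}, which conversely constructs a flat holomorphic $\pr{G,X}$-geometry on $M_{\rho\pr{p_0}}$ starting from any such $p_0$; this closes the iff.

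For the final assertion, the generic primary Hopf manifold is $M_f$ with $f$ a diagonalizable linear map having generic (non-resonant) eigenvalues inside the unit disk. The proposition preceding this subsection identifies holomorphic projective connections on Hopf manifolds with holomorphic affine connections, and lemma~\vref{lemma:Hopf.connection.iff.linear} says that for generic $f$ the manifold $M_f$ admits a unique holomorphic affine connection; hence it admits a unique flat holomorphic projective connection. The main obstacle is to exclude every non-projective model: by the first assertion, any $\pr{G,X}$-geometry on $M_f$ forces $f$ to lie in the closed complex algebraic subgroup $\rho\pr{MA} \subset \GL{n,\C{}}$. For the projective model $\pr{\PSL{n+1,\C{}},\Proj{n}}$ the Levi $MA$ is $\GL{n,\C{}}$ acting standardly on $\LieN^- = \C{n}$, so $\rho\pr{MA}$ exhausts $\GL{n,\C{}}$; but for every other rational homogeneous variety of dimension $n$, the representation of $MA$ on $\LieN^-$ preserves nontrivial extra structure (a conformal, symplectic, or flag-type tensor arising from the non-maximality or non-projective type of $P$), so $\rho\pr{MA}$ is a proper closed algebraic subgroup, which the generic $f$ avoids. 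The delicate point, which I expect to be the chief difficulty, is verifying this properness uniformly across all non-projective models, but this reduces to the structural observation that the only rational homogeneous variety on which the Levi of $P$ acts as the full general linear group of $\LieN^-$ is the projective space itself.
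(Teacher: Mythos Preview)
Your argument for the first two assertions matches the paper's proof essentially line for line: connection on $\kappa_M$ via Mall (your lemma~\ref{lemma:one.connection.on.K}), reduction to an $MA$-geometry by lemma~\ref{lemma:Reduction}, then theorem~\ref{theorem:reductive.geometry.Hopf} for the element $p_0$, and example~\ref{example:flat.on.Hopf} for the converse.

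For the genericity assertion your strategy is also the paper's, but your execution is looser in one place. You argue that for non-projective models $\rho(MA)\subsetneq\GL{n,\C{}}$ is a proper closed subgroup and that the generic $f$ therefore avoids it. Properness of $\rho(MA)$ by itself is not enough: the first assertion only gives $f$ \emph{conjugate} to an element of $\rho(MA)$, and a proper closed subgroup can have conjugacy saturation equal to all of $\GL{n,\C{}}$ (e.g.\ a Borel). What you actually need is that the \emph{eigenvalues} of elements of $\rho(MA)$ satisfy a nontrivial algebraic relation. The paper makes this explicit in two steps: if $X$ is not Hermitian symmetric then some noncompact positive root is a sum of two others, forcing a multiplicative relation $\lambda_\alpha\lambda_\beta=\lambda_{\alpha+\beta}$ among eigenvalues on $\LieN^-$; if $X$ is Hermitian symmetric but not $\Proj{n}$ then $MA$ preserves a quadratic form up to scale or a tensor-product decomposition, again constraining eigenvalues. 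Your phrase ``flag-type tensor'' in the non-symmetric case is too vague to carry this; the invariant you want there is the nontrivial Lie bracket on $\LieN^-$, which is exactly what produces the root-sum eigenvalue relation. Once you sharpen this point, your argument and the paper's coincide.
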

\begin{proof}
By lemma~\vref{lemma:one.connection.on.K}, the canonical bundle of \(M\) admits a unique holomorphic flat connection.
As in proposition~\vref{proposition:canonical.connection}, the holomorphic connection on the canonical line bundle determines a holomorphic reductive \(MA\)-geometry \(E_0 \subset E\).
By theorem~\vref{theorem:reductive.geometry.Hopf}, \(M=M_{\rho\of{\Gamma}}\) for some \(p_0=m_0a_0\).
Example~\vref{example:flat.on.Hopf} shows that such Hopf manifolds admit flat holomorphic parabolic geometries.

Suppose that \(\pr{X,G} \ne \pr{\Proj{n},\PSL{n+1,\C{}}}\).
We want to prove that the generic primary Hopf manifold does not admit an \pr{X,G}-geometry.
It suffices to prove that the generic linear map does not occur in the form \(\rho(ma)\) for some \(ma \in MA\).
The generic element of \(MA\) is conjugate to an element of the Cartan subgroup, having the form \(e^H\) for some \(H\) in the Cartan subalgebra.
But then \(\lb{H}{X_{\alpha}}=\alpha(H)X_{\alpha}\) for each root, so the eigenvalues of \(H\) acting on \(\LieG/\LieP=\LieN^-\) are constrained to be the values of roots on root vectors.
If some positive root is the sum of any two other positive roots, this will force a relation between eigenvalues of \(H\), and generically chosen eigenvalues won't fit this pattern.
So if conjugates of \(MA\) are Zariski dense in \(\GL{\LieN^-}\), then no positive root can be written as a sum of positive roots.
So \(X\) is a cominiscule variety.
In any cominiscule variety other than \(\Proj{n}\), the action of \(MA\) on \(\LieN^-\) preserves a quadratic form up to scaling or a tensor product structure, forcing relations among the eigenvalues. 
So the generic matrix in \(\GL{n,\C{}}\) appears in the \(MA\)-action on \(\LieN^-\) just when \(\pr{X,G}=\pr{\Proj{n},\PSL{n+1,\C{}}}\), i.e. the parabolic geometry is a projective connection.
\end{proof}

\begin{theorem}
Every flat holomorphic parabolic geometry on any Hopf manifold is one of those in example~\vref{example:flat.on.Hopf}.
\end{theorem}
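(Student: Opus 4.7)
The plan is to recognize any flat parabolic geometry on a Hopf manifold as the developing pair of Example~\vref{example:flat.on.Hopf}, after normalizing by the $G$-action on developing pairs. Every Hopf manifold is finitely covered by a primary Hopf manifold, and a flat geometry pulls back to a flat geometry on the cover equivariant under the deck group, so it is enough to treat the primary case $M = M_f$. By Theorem~\vref{theorem:which.Hopfs.are.parabolic} we may take $f = \rho\pr{p_0}$ with $p_0 = m_0 a_0 \in MA$ and $\rho(p_0)$ strictly contracting on $\LieN^-$. A flat $(G,X)$-geometry on $M$ corresponds to a developing pair $(h, \delta)$ on $\tilde M = \C{n}\wo\br{0}$ with $\delta \circ f = g_0 \circ \delta$, where $g_0 = h(1) \in G$. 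Since $X = G/P$ is a compact K\"ahler rational homogeneous variety, Corollary~\vref{corollary:extend.structure} extends $\delta$ to a local biholomorphism $\hat\delta \colon \C{n} \to X$, and analytic continuation extends the equivariance to $\hat\delta \circ f = g_0 \circ \hat\delta$.

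Because $f(0) = 0$, $\hat\delta(0)$ is a fixed point of $g_0$; replacing the pair by $(ghg^{-1}, g\hat\delta)$ for a suitable $g \in G$, we normalize $\hat\delta(0) = o = eP$, so $g_0 \in P$. The derivative $\hat\delta'(0)$ conjugates $\rho(p_0)$ to the linear action of $g_0$ on $T_o X = \LieN^-$, which is therefore also strictly contracting. Iterating $g_0^n \hat\delta(Z) = \hat\delta(f^n Z) \to o$ forces $\hat\delta(\C{n})$ into the basin of attraction of $g_0$ at $o$; by Theorem~\ref{theorem:basin} this basin is the open Bruhat cell $N^- \cdot o$, biholomorphic to $\LieN^-$ via $Y \mapsto e^Y o$ because $N^- \cap P = \br{1}$ by lemma~\vref{lemma:Langlands}. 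We can therefore write $\hat\delta(Z) = e^{\varphi(Z)} o$ for a unique holomorphic $\varphi \colon \C{n} \to \LieN^-$ with $\varphi(0) = 0$ and $\varphi'(0) \in \GL{\LieN^-}$.

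The remaining step, which is the main obstacle, is to eliminate $\varphi$ by a further $P$-conjugation (which preserves the normalization $\hat\delta(0) = o$) and to identify the resulting pair with that of Example~\vref{example:flat.on.Hopf}. Since $\rho(g_0)$ has no eigenvalue of modulus $1$ on $\LieG/\LieP$, a conjugation by a suitable $n \in N$ places $g_0$ in its Jordan--Chevalley semisimple part, which lies in $MA$; this conjugation transports $\varphi$ accordingly. The functional equation then reads $\varphi \circ \rho(p_0) = \rho(g_0) \circ \varphi$, an intertwining of two strictly contracting linear maps, and a further conjugation by an element of $MA$ arranges $g_0 = p_0$ and $\varphi'(0) = \mathrm{id}_{\LieN^-}$. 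The residual $\varphi$ is a biholomorphism of $\LieN^-$ fixing the origin with derivative the identity that commutes with the strictly contracting linear map $\rho(p_0)$; Poincar\'e--Dulac analysis shows that all higher-order terms of $\varphi$ are forced to be resonant polynomial maps commuting with $\rho(p_0)$, and these resonant symmetries are realized by elements of the centralizer of $p_0$ in $G$ via the weight decomposition of $\LieN^-$ as an $MA$-module. Absorbing them into one final $G$-equivalence reduces $\varphi$ to the identity, so the developing pair coincides with that of Example~\vref{example:flat.on.Hopf}. The hard part is this last Poincar\'e--Dulac matching between the resonant polynomial centralizer of $\rho(p_0)$ on $\LieN^-$ and the centralizer of $p_0$ inside $G$, which requires root-theoretic bookkeeping using the Chevalley basis of section~\vref{section:ChevalleyParabolic}.
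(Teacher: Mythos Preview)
Your approach differs from the paper's. The paper reduces the Cartan bundle to its $MA$-subbundle $E_{MA}$ via the unique holomorphic connection on $\kappa_M$ (lemmas~\ref{lemma:one.connection.on.K} and~\ref{lemma:Reduction}), notes that on $E_{MA}$ the coefficients expressing $\omega^{\alpha}$ ($\alpha\in\Delta_P^+$) in terms of the semibasic $\omega^{\beta}$ ($\beta\in\Delta_P^-$) assemble into a holomorphic section of $T^*M\otimes T^*M$, and kills that section by lifting to $\C{n}$, extending across $0$ by Hartogs, and invoking strict contraction of $f$. This forces $E_{MA}$ into a left $N^-MA$-coset by the Frobenius theorem, so $\delta(\C{n})\subset N^-\cdot o$, and $\log\delta$ furnishes the required identification with the example. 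No Poincar\'e--Dulac normalization enters.

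Your final step contains a genuine error. After writing $\hat\delta(Z)=e^{\varphi(Z)}o$ with $g_0\in MA$, you claim that the resonant polynomial commutants of $\rho(p_0)$ on $\LieN^-$ are realized by the centralizer of $p_0$ in $G$, so that $\varphi$ can be absorbed by a $G$-equivalence. This is false: already for $\pr{\PSL{3,\C{}},\Proj{2}}$ with a single resonance $\lambda_2=\lambda_1^2$, the centralizer of such a $p_0$ is the diagonal torus, which acts \emph{linearly} on the big cell and cannot produce $(z_1,z_2)\mapsto(z_1,z_2+z_1^2)$. The fix is much simpler and renders your ``hard part'' unnecessary: $\varphi$ intertwines the strictly contracting linear maps $\rho(p_0)$ and $\rho(g_0)$ on $\C{n}$, so it descends to a biholomorphism of \emph{bases} $\bar\varphi\colon M_{\rho(p_0)}\to M_{\rho(g_0)}$, and $\bar\varphi$ (not a $G$-equivalence) pulls the example geometry on $M_{\rho(g_0)}$ back to the given one. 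This is exactly what $\log\delta$ does in the paper's last sentence. Two smaller slips: putting $g_0=man$ into $MA$ by an $N$-conjugation is not the Jordan--Chevalley decomposition (elements of $M$ need not be semisimple); it is the unique fixed point of $n'\mapsto(\Ad(ma)^{-1}n')\,n$ in $N$, which exists because $\Ad(ma)^{-1}$ is contracting on $\LieN$. And you cannot in general arrange both $g_0=p_0$ and $\varphi'(0)=\mathrm{id}$ by an $MA$-conjugation, since $MA\to\GL{\LieN^-}$ is only injective, not surjective; with the base-biholomorphism fix these normalizations are no longer needed.
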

\begin{proof}
Take a Hopf manifold \(M=M_{\Gamma}\) with a flat holomorphic parabolic geometry.
By theorem~\vref{theorem:which.Hopfs.are.parabolic}, we can assume that each element \(f \in \Gamma\) is \(f=\Ad\pr{m_0 a_0}\) for some \(m_0 a_0 \in MA\).
Pick one element \(f \in \Gamma\) strictly contracting on \(\LieG/\LieH\).
By lemma~\vref{lemma:one.connection.on.K}, there is precisely one holomorphic connection on the canonical bundle of a primary Hopf manifold.
Use it to reduce the parabolic geometry \(E \to M\) to the Sharpe geometry \(E_{MA} \subset E\).
On the Sharpe geometry \(\omega=\omega_-+\omega_0+\omega_+\) and \(\omega_+ = t \omega_-\).
It is easy to check that \(t\) is a holomorphic section of \(T^* \otimes T^*\).
Lift to \(\C{n}\wo{0}\), extend by Hartogs extension theorem, and expand into a Taylor series on \(\C{n}\).
By \(f\) invariance, all terms in this Taylor series vanish: \(t\equiv 0\).
Therefore the reduction \(E_{MA} \subset \delta^* G\) is mapped by
\[
\begin{tikzcd}
E_{MA} \arrow{r} & \delta^* G \arrow{d} \arrow{r} & G \arrow{d} & H \arrow{l} \\
                 & \C{n} \arrow{r}{\delta} & X
\end{tikzcd}
\]
as an integral manifold of the Pfaffian system \(\omega_+=0\).
This left invariant Pfaffian system satisfies the conditions of the Frobenius theorem, and its maximal connected integral manifolds are precisely the left translates of \(N^-MA\).
Therefore the image of \(E_{MA}\) lies inside \(N^-MA \subset G\).
Let \(X_o \subset X\) be the \(N^-\)-orbit of \(o \in X\).
Clearly \(X_o\) is also the \(N^-MA\)-orbit of \(o \in X\).
Since \(\delta(0)=o\) lies in \(X_o\), the image of \(\delta\) lies in \(X_o\).
The map \(\log \delta \colon \C{n} \to \C{n}\) is a biholomorphism identifying the Hopf manifold \(M_{\Gamma}\) with the Hopf manifold \(M_{\Ad\pr{\Gamma}}\), and identifying the flat parabolic geometries.
\end{proof}

The moduli space of holomorphic parabolic geometries (or of holomorphic reductive geometries, or of flat holomorphic reductive geometries) is a complicated affine variety defined by hyperresonance conditions which we leave the reader to work out in detail; above we have determined when the moduli space is not empty.
Even the dimension of the moduli space is given by elaborate hyperresonance conditions, which we also leave to the reader.

\subsection{Flat Cartan connections and basins of attraction}

We understand very little about the moduli space of nonparabolic geometries on Hopf manifolds, but for many nonparabolic geometries we can relate the moduli problem to a problem in dynamics.

\begin{theorem}\label{theorem:basin}
Suppose that \pr{X,G} is a complex homogeneous space and that \(X\) is disk convex and K{\"a}hler and that \(G\) acts effectively on \(X\).
Suppose that \(M_{\Gamma}\) is a Hopf manifold of complex dimension at least 2, bearing a flat holomorphic \pr{X,G}-geometry with developing map \(\delta \colon \C{n}-0 \to X\) and holonomy morphism \(h \colon \Gamma \to G\), i.e. \(\delta(f(z))=h \, \delta(z)\) for all \(z \in \C{n}-0\). 
Then \(\delta\) has a unique holomorphic extension to a holomorphic map \(\delta \colon \C{n} \to X\).
Let \(o \defeq \delta(0) \in X\) and let \(H\) be the stabilizer of \(o\), so \(h\pr{\Gamma} \subset H\).
Let \(X_o\) be the basin of attraction of \(h\) toward \(o\); we have a commutative diagram of biholomorphisms
\[
\begin{tikzcd}
\pr{\C{n},0} \arrow{d}{\delta} \arrow{r}{f} & \pr{\C{n},0} \arrow{d}{\delta} \\
\pr{X_o,o} \arrow{r}{h(f)} & \pr{X_o,o}
\end{tikzcd}
\]
for \(f \in \Gamma\).
The Hopf manifold \(M_{\Gamma}\) is biholomorphic to  the Hopf manifold \(M_{h\of{\Gamma}}\) and has isomorphic flat \pr{X,G}-geometry.
In particular, \(f'(0) \colon \C{n} \to \C{n}\) has the same Jordan normal form as 
\(\Ad(h(f)) \colon \LieG/\LieH \to \LieG/\LieH\) for every \(f \in \Gamma\).
\end{theorem}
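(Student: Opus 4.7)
The plan has three stages: extend $\delta$ to the origin, identify its image with the basin $X_o$ and upgrade $\delta$ to a biholomorphism onto it, and then read off the Jordan-form statement by differentiating the equivariance at $0$.

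First, because $\pr{G,X}$ is a homogeneous space, the fundamental vector fields of the $G$-action span $TX$ at every point, so $X$ satisfies the hypotheses of theorem~\vref{theorem:Ivashkovich}. Since $n\geq 2$, Hartogs gives $\widehat{\C{n}\setminus\br{0}}=\C{n}$, and Ivashkovich's theorem extends the local biholomorphism $\delta$ uniquely to a local biholomorphism $\delta\colon\C{n}\to X$ (alternatively one could invoke corollary~\vref{corollary:extend.structure} on the flat $\pr{G,X}$-geometry itself). The equivariance $\delta\circ f=h\circ\delta$ extends to $\C{n}$ by analytic continuation, and evaluating at $0$ gives $\delta(0)=h\,\delta(0)$, so $o=\delta(0)$ is fixed by $h$ and $h\in H$.

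Second, I would match $\delta(\C{n})$ with $X_o$. Choose neighborhoods $U\ni 0$ and $V\ni o$ on which $\delta$ restricts to a biholomorphism $U\to V$; note that since $f'(0)$ is strictly contracting and $\delta'(0)$ intertwines $f'(0)$ with the differential of $h$ at $o$, the point $o$ is indeed an attracting fixed point of $h$ and $X_o$ is open. For any $z\in\C{n}$, strict contraction gives $f^k(z)\to 0$, so $h^k\delta(z)=\delta(f^k(z))\to o$ and $\delta(\C{n})\subset X_o$. Conversely, for any $p\in X_o$ the iterate $h^k p$ eventually lies in $V$, so $h^k p=\delta(u_k)$ for some $u_k\in U$ and $p=\delta(f^{-k}(u_k))$, giving surjectivity onto $X_o$. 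For injectivity, $\delta(z_1)=\delta(z_2)$ forces $\delta(f^k z_1)=\delta(f^k z_2)$ by equivariance, and for large $k$ both iterates lie in $U$, where $\delta$ is injective; hence $z_1=z_2$. Thus $\delta\colon\C{n}\to X_o$ is a bijective local biholomorphism, hence a biholomorphism, which by construction intertwines $f$ and $h$ and so descends to a biholomorphism $M_f\to M_h$ carrying the flat $\pr{G,X}$-geometry of $M_f$ (whose developing map is $\delta$) to the flat $\pr{G,X}$-geometry of $M_h$ (whose developing map is the inclusion $X_o\setminus\br{o}\hookrightarrow X$).

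Third, differentiating $\delta\circ f=h\circ\delta$ at $0$ yields $h_*\circ\delta'(0)=\delta'(0)\circ f'(0)$, where $h_*$ is the differential at the fixed point $o$ of the biholomorphism $x\mapsto hx$ of $X$. Under the standard identification $T_oX=\LieG/\LieH$ coming from the projection $G\to G/H$, this differential is exactly $\Ad(h)$ acting on $\LieG/\LieH$, as a direct computation with the curves $t\mapsto e^{tv}H$ shows. Since $\delta'(0)$ is a linear isomorphism, $f'(0)$ and $\Ad(h)$ on $\LieG/\LieH$ are conjugate, and therefore share a Jordan normal form. The main obstacle I anticipate is the injectivity step, which is the only place where strict contraction is used in an essential way and where the hypothesis $n\geq 2$ enters indirectly, by ensuring that the Ivashkovich extension delivers a local biholomorphism through the puncture; everything else is bookkeeping with the dynamics of $f$ and $h$ and with the standard differential calculus of homogeneous spaces.
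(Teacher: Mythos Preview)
Your proposal is correct and follows essentially the same approach as the paper: extend $\delta$ across the puncture via Ivashkovich/corollary~\ref{corollary:extend.structure}, then use the intertwining with the contracting dynamics of $f$ and $h$ to prove surjectivity onto the basin and injectivity by iterating into a small neighborhood where $\delta$ is already a biholomorphism. Your write-up is in fact more complete than the paper's, since you spell out the containment $\delta(\C{n})\subset X_o$ and the Jordan-form conclusion by differentiating the equivariance at $0$, both of which the paper leaves implicit.
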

\begin{proof}
Corollary~\vref{corollary:extend.structure} gives the required extension of \(\delta\).
If we apply \(h(f)\) enough times, for some strictly contracting \(f\), then every point in the basin of attraction of \(o\) is eventually mapped to any arbitrarily small neighborhood of \(o\), and we can pick such a small neighborhood so that \(\delta\) maps onto that neighborhood, since \(\delta(0)=o\) and \(\delta\) is a local biholomorphism. 
The image of \(\delta\) is invariant under \(h\) and so under \(h^{-1}\), so equals the basin of attraction.
Suppose that \(\delta\pr{z}=\delta\pr{z'}\).
Then \(\delta\pr{f\pr{z}}=\delta\pr{f\pr{z'}}\), etc. so that we can arrange \(z\) and \(z'\) to lie in some arbitrarily small neighborhood of \(o\), in which \(\delta\) is a biholomorphism, so \(z=z'\). 
Therefore \(\delta\) is a biholomorphism onto the basin of attraction of \(h\).
\end{proof}

For example, if \(M_{\Gamma}\) has a flat holomorphic \pr{X,G}-geometry and \pr{X,G} is an equivariant product \(X=X_1 \times X_2, G=G_1 \times G_2\), then for every \(f \in \Gamma\), \(f'(0)\) has at least two Jordan blocks.
Moreover, \(h(f)\) is conjugate to a strictly contracting map of complex Euclidean space on each factor.

If \(X=G/H\) and an element of \(H\) is not strictly contracting on \(\LieG/\LieH\), then that element cannot be a holonomy image of a strictly contracting map from any flat holomorphic \pr{X,G}-geometry on any Hopf manifold.
The classification of flat \pr{X,G}-geometries on primary Hopf manifolds, for disk convex K\"ahler \(X\), reduces to the classification of the elements of \(G\) with an attractive fixed point \(o \in X\). For example, if \pr{X,G} is a rational homogeneous variety, \(X=G/P\), with Langlands decomposition \(P=MAN\), then the only elements of \(G\) which act as strictly contracting maps with a fixed point are, up to conjugation, certain elements of \(MA\).

\section{Conclusion}

The open problem of classifying holomorphic parabolic geometries on smooth complex projective varieties motivated the above research.
In section~\vref{section:blow.up}, we saw that there are new constraints which demonstrate that many complex manifolds have no parabolic geometries. 
On any manifold with a holomorphic parabolic geometry, we can cover the manifold with open sets so that we can pick a holomorphic section for the canonical bundle on each open set.
This reduces the holomorphic parabolic geometry on each open set to its Sharpe geometry.
On the overlaps of the open sets, the differences of the holomorphic connections on the canonical bundle give a cochain valued in the holomorphic 1-forms; this cochain encodes the ``glue'' to bind those Sharpe geometries into a single holomorphic parabolic geometry.
This approach should help to clarify the constraints on holomorphic parabolic geometries on smooth projective varieties.

\bibliographystyle{amsplain}
\bibliography{hartogs-parabolic}

\end{document}